\numberwithin{equation}{section}
\title{A Stable Iterative Direct Sampling Method for Elliptic Inverse Problems with Partial Cauchy Data\thanks{The work of B. Jin is supported by Hong Kong RGC General Research Fund (14306423 and 14306824) and ANR / Hong Kong RGC Joint Research Scheme (A-CUHK402/24) and a start-up fund from The Chinese University of Hong Kong. The work of J. Zou was substantially supported by the Hong Kong RGC General Research Fund (projects 14310324, 14306623 and 14306921) and NSFC / Hong Kong RGC Joint Research Scheme 2022/23 (project N\_CUHK465/22).}}
\author{Bangti Jin\thanks{Department of Mathematics, The Chinese University of Hong Kong, Shatin, N.T., Hong Kong (email: \texttt{b.jin@cuhk.edu.hk, fengruwang@cuhk.edu.hk, zou@math.cuhk.edu.hk})}\and
Fengru Wang\footnotemark[2] \and
Jun Zou\footnotemark[2]}
\begin{document}

\maketitle

\begin{abstract}
We develop a novel iterative direct sampling method (IDSM) for solving linear or nonlinear elliptic inverse problems with partial Cauchy data.
It integrates three innovations: a data completion scheme to reconstruct missing boundary information, a heterogeneously regularized Dirichlet-to-Neumann map to enhance the near-orthogonality of probing functions, and a stabilization-correction strategy to ensure the numerical stability.
The resulting method is remarkably robust with respect to measurement noise, is flexible with the measurement configuration, enjoys provable stability guarantee, and achieves enhanced resolution for recovering inhomogeneities.
Numerical experiments in electrical impedance tomography, diffuse optical tomography, and cardiac electrophysiology show its effectiveness in accurately reconstructing the locations and geometries of inhomogeneities.
\end{abstract}

\begin{keywords}
iterative direct sampling method, elliptic inverse problem, partial Cauchy data, stability analysis
\end{keywords}


\section{Introduction} \label{sec:INT}

Inverse problems involving the estimation of physical parameters in elliptic partial differential equations (PDEs) from indirect and noisy measurements are foundational to many disciplines.
These problems aim to extract critical information of inhomogeneities inside the system, with broad applications in biomedical imaging \cite{Ammari2008}, nano-optics \cite{Novotny2006}, geophysics \cite{Zhdanov2015}, and environmental science \cite{ElBadia2002,Yakowitz1980} etc. {Most existing methods are designed for the case when full boundary data is available, and may work  poorly with partial data.
However, in many applications, it is often possible to collect only partial boundary data. The focus of the present work is to construct robust numerical methods for elliptic inverse problems with partial Cauchy data.}

Over the past three decades, direct methods have been developed for obtaining qualitative estimates of inhomogeneities.
Prominent methods include linear sampling method \cite{Kirsch1996}, MUltiple SIgnal Classification (MUSIC) \cite{AmmariCalmon:2008,Devaney2004,AmmariLesselier:2005}, point source method \cite{Potthast1998}, factorization method \cite{Kirsch1998}, and direct sampling method (DSM) \cite{ItoJinZou:2012}.
These techniques construct suitable indicator functions using functionals of measured data to indicate the presence of inhomogeneities, among which the DSM is notable due to its high computational efficiency and low data requirements.
It constructs index functions by computing the inner products of measured data and tailored probing functions. The DSM can achieve qualitative reconstructions of material inclusions using a few datasets.
It has been applied to diverse elliptic inverse problems, e.g., electrical impedance tomography \cite{Zou2014}, diffusion-based optical tomography \cite{Zou2015} and object detection \cite{Sun2023}.
By building on the conventional DSM, {in \cite{Ito2025Iterative}} we have proposed an Iterative DSM (IDSM) that achieves enhanced accuracy, improved stability, and broader applicability with only a modest increase in computational cost.

However, the extension of DSM and IDSM frameworks to scenarios involving partial Cauchy data remains challenging, which involves three fundamental challenges.
First, partial data disrupts the mutual orthogonality between probing functions and Green's function, which is essential for constructing accurate index functions in the full-data setting.
Second, the transition between accessible and inaccessible boundary segments induces additional discontinuity in the state, and complicates both mathematical analysis and numerical stability.
Third, existing formulations lack rigorous theoretical guarantees of the stability of the iterative process.
{The stability issue is particularly acute for partial boundary data since it typically requires more iterations to achieve satisfactory reconstructions.}
This represents a critical gap in the theoretical foundation of these methods.

To address these limitations, we propose a novel IDSM that integrates three key innovations.
First, we employ a data completion scheme to reconstruct the missing boundary data on the inaccessible portion. 
Second, we employ a heterogeneously regularized Dirichlet-to-Neumann map, which induces a dual product that enhances the near-orthogonality of the probing functions and ensures robustness to noisy data.
Third, by building on the dynamic low-rank structure of the existing IDSM framework \cite{Ito2025Iterative}, we incorporate a stabilization-correction updating scheme.
This scheme adaptively damps inaccurate low-rank terms from early iterations, and greatly enhances the accuracy of the resolver operator.
Moreover, we establish the stability of the IDSM, which fills a critical gap in the mathematical foundation of the DSM for elliptic inverse problems.

The rest of the paper is organized as follows.
In Section \ref{sec:PRO}, we formulate the mathematical model of elliptic inverse problems.
In Section \ref{sec:MAT}, we describe the IDSM framework with full boundary data.
In Section \ref{sec:pbm}, we develop the IDSM for partial Cauchy data by introducing three key innovations.
In Section \ref{sec:IMP}, we describe the practical implementation of the IDSM.
We present numerical experiments in Section \ref{sec:NUM}, and give concluding remarks in Section \ref{sec:CON}.
Throughout, for any Hilbert space $H$ and Banach space $X$, the notation $(\cdot, \cdot)_H$ denotes the inner product on $H$, while $\langle \cdot, \cdot \rangle_{X', X}$ denotes the duality pairing between $X$ and its dual space $X'$.
For Sobolev spaces $X$ and $Y$, we denote by $B(X,Y)$ the space of bounded operators from $X$ to $Y$, with $L(X,Y) \subset B(X,Y)$ denoting the subspace of bounded linear operators, and $X \hookrightarrow Y$ denoting the Sobolev embedding.

\section{Problem formulation}\label{sec:PRO}
Let $\Omega\subset \mathbb{R}^{d}$ ($d=2,3$) be an open, bounded, simply connected domain with a smooth boundary $\Gamma=\partial\Omega$.
The boundary $\Gamma$ is divided into two parts: the accessible part $\Gamma_{D}\subset \Gamma$ and the inaccessible part $\Gamma_{N}=\Gamma\setminus\Gamma_{D}$.
The unknown parameter $u$ represents inhomogeneous inclusions whose material properties differ from the background medium. It belongs to the admissible set $S = \{u{\in L^{\infty}(\Omega)} : a \leq u \leq b\}$ (with $a < b$), which enforces physically relevant constraints.
We assume that $u$ admits the representation
\begin{equation}\label{eqn2}
u(x) = \sum_{i=1}^n c_i \chi_{\omega_i}(x),
\end{equation}
where $n$ denotes the number of inclusions, $\{\omega_j\}_{j=1}^n$ are pairwise disjoint subdomains compactly contained in $\Omega$, $\chi_{\omega_i}$ is the characteristic function of $\omega_i$, and each $c_i \in \mathbb{R}$ is a constant within $\omega_i$.
In the presence of multiple types of inhomogeneities, the function $u$ is vector-valued with each component taking the form \eqref{eqn2}.
For example, in diffuse optical tomography (DOT),
\begin{equation}\label{eqn3}
\left\{\begin{aligned}
\nabla \cdot ((u_{c}+c_{0}) \nabla y) + (u_{p}+p_{0}) y &= 0, \quad \text{in } \Omega, \\
c_{0} \partial_{n}y &= f, \quad \text{on } \Gamma,
\end{aligned}\right.
\end{equation}
where $u_c$ and $u_{p}$ are the deviations of the conductivity and potential coefficient from the background medium, respectively{,   and both $u_c$ and $u_p$ take the form \eqref{eqn2}.}

The direct problem of the (possibly nonlinear) elliptic problem in an operator form reads:
\begin{equation}\label{eqn1}
\mathcal{A}[y]y + \mathcal{B}[u](y) = f,
\end{equation}
where $f \in L^2(\Gamma) \hookrightarrow (H^1(\Omega))'$ denotes a boundary source (via the trace theorem), or $f \in L^2(\Omega) \hookrightarrow (H^1(\Omega))'$ denotes a domain source.
The elliptic operator $\mathcal{A}: H^1(\Omega) \to B(H^1(\Omega), (H^1(\Omega))')$ models the background physical process and may be semilinear in the state $y$.
The operator $\mathcal{B}$ linearly maps $S \subset L^\infty(\Omega)$ to $B(H^1(\Omega), (H^1(\Omega))')$, and describes the influence of the inhomogeneity $u$ on the state $y$, which may also be nonlinear in $y$.
When $\mathcal{A}[y]y$ and $\mathcal{B}[u](y)$ are linear in $y$, we write them as $\mathcal{A}y$ and $\mathcal{B}[u]y$.
For example, for the model \eqref{eqn3}, the operators $\mathcal{A}$ and $\mathcal{B}$ are defined by
\begin{equation*}
\begin{aligned}
\langle\mathcal{A}y,v\rangle_{(H^{1}(\Omega))', H^{1}(\Omega)} &= \int_{\Omega} c_0 \nabla y \cdot \nabla v + p_0 y v \, {\rm d}x, \\
\langle\mathcal{B}[u]y,v\rangle_{(H^{1}(\Omega))', H^{1}(\Omega)} &= \int_{\Omega} (u_{c},u_{p}) (\nabla y \cdot \nabla v , y v)^{\top} \, {\rm d}x.
\end{aligned}
\end{equation*}
For a detailed description of the abstract form \eqref{eqn1}, see \cite[Section 2]{Ito2025Iterative}.

The objective of the inverse problem is to identify the number, locations, and shapes of inclusions within the domain $\Omega$, using only a few pairs of partial Cauchy data.
The partial data setting exacerbates the inherent ill-posedness of the inverse problem, especially for $\omega_{j}$ far away from $\Gamma_{D}$.

\section{The IDSM for full boundary data}\label{sec:MAT}
{In this section we first describe the IDSM that we developed for the full boundary data in \cite{Ito2025Iterative}, and then discuss some challenges for the partial data.} To this end, we employ several operators.
The transpose operator $\mathcal{B}_\tau$ associated with $\mathcal{B}$ is defined by {(with the inhomogeneity $u$)}
\begin{equation}\label{eqn9}
\mathcal{B}_\tau[\cdot] : H^1(\Omega) \to L(L^\infty(\Omega), (H^1(\Omega))'), \quad
\mathcal{B}_\tau[y]u = \mathcal{B}[u](y).
\end{equation}
The adjoint operator $\mathcal{B}_\tau[y]^{*}\in L(H^{1}(\Omega),(L^{\infty}(\Omega))')$ is defined via duality pairing:
\begin{equation*}
\langle \mathcal{B}_{\tau}[y]^{*} p, u \rangle_{(L^\infty(\Omega))', L^\infty(\Omega)} =\langle \mathcal{B}_{\tau}[y]u, p \rangle_{(H^{1}(\Omega))', H^{1}(\Omega)} = \langle \mathcal{B}[u](y), p \rangle_{(H^{1}(\Omega))', H^{1}(\Omega)}.
\end{equation*}
These identities characterize the relationship between $\mathcal{B}$, its transpose $\mathcal{B}_\tau$, and the adjoint $\mathcal{B}_\tau^{*}$.
We denote by $u_{*}$ the ground truth inclusion, and by $y(u)$ the solution to the elliptic system \eqref{eqn1} with $u$.
The trace operator $\mathcal{T} \in L(H^1(\Omega), L^2(\Gamma))$ maps $H^1(\Omega)$ functions to their boundary traces in $L^2(\Gamma)$, and the restricted version $\mathcal{T}_D \in L(H^1(\Omega), L^2(\Gamma))$ maps to the boundary $\Gamma_D$:
\begin{equation*}
(\mathcal{T}_D y)(x) =
\begin{cases}
y(x), & x \in \Gamma_D, \\
0, & \text{otherwise}.
\end{cases}
\end{equation*}
$\mathcal{T}_{D}$ and its complement $\mathcal{T}_N = \mathcal{T} - \mathcal{T}_D$ (supported on $\Gamma_N = \Gamma \setminus \Gamma_D$) provide a partition of the function in $L^{2}(\Gamma)$.
The choice of $L^2(\Gamma)$ is to match the measured data $y_d \in L^2(\Gamma)$, which is a noisy version of $\mathcal{T}_{D}y(u_{*})$, i.e., $y_d = \mathcal{T}_D y(u_*) + \varepsilon$ with $\varepsilon\in L^{2}(\Gamma)$ { being the noise}.

We now connect the unknown inclusion parameter $u_*$ and the state $y(u_*)$.
From \eqref{eqn1}, the inclusion $u_{*}$ and the corresponding state $y(u_{*})$ satisfy
\begin{equation}\label{eqn5}
y(u_{*}) = \mathcal{A}[y(u_{*})]^{-1}\left(f - \mathcal{B}[u_{*}](y(u_{*}))\right).
\end{equation}
Let $y_{\emptyset}(u)$ be the background solution (i.e., without the inclusion):
\begin{equation}\label{eqn6}
y_{\emptyset}(u) = \mathcal{A}[y(u)]^{-1}f.
\end{equation}
The scattering field $y^s(u)$ is defined by
\begin{equation*}
y^{s}(u) = \mathcal{T}y_{\emptyset}(u) - \mathcal{T}y(u_{*}).
\end{equation*}
By subtracting equation \eqref{eqn5} from the background equation \eqref{eqn6}, we obtain the expression for the scattering field:
\begin{align}
&y^{s}(u_{*}) = \mathcal{T}y_{\emptyset}(u_{*}) - \mathcal{T}y(u_{*}) \nonumber\\
=& \mathcal{T}\mathcal{A}[y(u_{*})]^{-1}f - \mathcal{T}\mathcal{A}[y(u_{*})]^{-1}\left(f - \mathcal{B}[u_{*}](y(u_{*}))\right)\nonumber \\
=& \mathcal{T}\mathcal{A}[y(u_{*})]^{-1}\mathcal{B}[u_{*}](y(u_{*})) = \mathcal{T}\mathcal{A}[y(u_{*})]^{-1}\mathcal{B}_{\tau}[y(u_{*})]u_{*}.\label{eqn8}
\end{align}
This motivates the forward operator $\mathcal{H}[u] = \mathcal{T}\mathcal{A}[y(u)]^{-1}\mathcal{B}_{\tau}[y(u)]$, which maps $S\subset L^{\infty}(\Omega)$ linearly to $L^{2}(\Gamma)$ and relates the inclusion parameter $u$ to the scattering field $y^{s}(u)$.

Then recovering the inclusion $u$ amounts to inverting the equation $y^{s}(u_{*}) = \mathcal{H}[u_{*}]u_{*}$.
{The IDSM constructs an index function $\eta$ that approximates $u_{*}$ through a probing operator $\mathcal{K}$:
\begin{equation*}
\eta = \mathcal{K}\mathcal{H}[u_{*}]u_{*}.
\end{equation*}
That is, $\eta$ is defined pointwise via a probing kernel $K(x_b, x):\Gamma\times\Omega\mapsto\mathbb{R}$ that generates probing functions $k_x(x_b) := K(x_b, x)$:}
\begin{equation*}
\eta(x) = (\mathcal{K}y^{s}(u_{*}))(x) = (k_x, y^{s}(u_{*}))_{L^{2}(\Gamma)}.
\end{equation*}
If $\mathcal{K}$ approximately inverts $\mathcal{H}[u_{*}]$, i.e., $\mathcal{K}\mathcal{H}[u_{*}] \approx \mathcal{I}_{L^{\infty}(\Omega)}$ (the identity operator on $L^\infty(\Omega)$), then $\eta$ provides a qualitative reconstruction of the inclusion $u_{*}$:
\begin{equation*}
\eta = \mathcal{K}\mathcal{H}[u_{*}]u_{*} \approx u_{*}.
\end{equation*}
When constructing the probing operator $\mathcal{K}$, a lifting operator is required to map the scattering field $y^{s}$ from the boundary $\Gamma$ to the domain $\Omega$.
One common choice is the adjoint operator $\mathcal{H}[u_{*}]^{*} \in \mathcal{L}(L^{2}(\Gamma), (L^{\infty}(\Omega))')$.
The $L^{2}(\Gamma)$ adjoint of $\mathcal{H}[u_{*}]$ is defined via duality pairing:
$$\langle \mathcal{H}[u_{*}]^{*} y, v \rangle_{(L^{\infty}(\Omega))', L^{\infty}(\Omega)} = (y, \mathcal{H}[u_{*}]v)_{L^{2}(\Gamma)}, \quad\forall v \in L^{\infty}(\Omega).$$
This leads to the dual function:
\begin{equation}\label{eqn7}
\zeta = \mathcal{H}[u_{*}]^{*} y^{s} = \mathcal{G}[u_{*}]u_{*},
\end{equation}
where $\mathcal{G}[u] = \mathcal{H}[u]^{*}\mathcal{H}[u] \in \mathcal{L}(L^{\infty}(\Omega), (L^{\infty}(\Omega))')$ denotes the Gramian of $\mathcal{H}[u]$.
{
We then employ a resolver $\mathcal{R} \in \mathcal{L}((L^{\infty}(\Omega))', L^{\infty}(\Omega))$ that approximates the inverse of $\mathcal{G}[u_{*}]$.
Together with the adjoint operator $\mathcal{H}[u_{*}]^{*}$, this induces a probing operator $\mathcal{K} = \mathcal{R}\mathcal{H}[u_{*}]^{*}$, which approximates the inverse of $\mathcal{H}[u_{*}]$.
}

In practice, there are two limitations in computing the index function $\eta$ due to the nonlinearity of the inverse problem.
First, the knowledge of $y(u_*)$ is limited to the boundary $\Gamma$, and thus applying $\mathcal{H}[u_*]^*$ in \eqref{eqn7} is intractable.
Second, when the operator $\mathcal{A}$ is nonlinear, $\mathcal{A}[y(u_*)]$ is unknown.
To overcome the nonlinearity, the IDSM employs a fixed-point scheme:
\begin{equation}\label{eqn11}
\eta^{k+1} =\mathcal{R}\zeta^{k}= \mathcal{R}\mathcal{H}[u^k]^* y^s(u^k).
\end{equation}
Given the initial guess $u^0 = 0$, the index function $\eta^{k+1}$ at the $(k+1)$th iteration is computed via
\begin{equation*}
\eta^{k+1} = \mathcal{R} \left( \mathcal{T}\mathcal{A}[y(u^k)]^{-1}\mathcal{B}_\tau[y(u^k)] \right)^* \left( \mathcal{T}y_\emptyset(u^k) - \mathcal{T}y(u_{*}) \right).
\end{equation*}
The updated inclusion estimate $u^{k+1}$ is obtained by projecting $\eta^{k+1}$ onto the admissible set $[a, b]$ via the pointwise projection operator $\mathcal{P}_{[a,b]}$, and the corresponding potential $y(u^{k+1})$ is computed by solving the forward problem \eqref{eqn1}.
The iteration then proceeds with the new pair $(u^{k+1}, y(u^{k+1}))$.
This process progressively refines the estimate of $u^k$ and drives the potential $y(u^k)$ toward $y(u_*)$, thereby addressing the nonlinearity of the problem.

The extension of the IDSM framework to the setting with partial Cauchy data faces three challenges. (i) The lack of full boundary measurements leads to uncertainty in the state trace $\mathcal{T}y(u_*)$; (ii) The near-orthogonality of Green’s functions, which is essential for constructing accurate index functions, is compromised. This degrades the resolution, particularly in the regions far away from $\Gamma_D$; (iii) The ill-posedness of the problem under partial data exacerbates the unboundedness of  $\mathcal{R}$, which aggravates the numerical instability over successive iterations.

\section{The IDSM for partial boundary measurement}\label{sec:pbm}
In this section, we develop the new IDSM for partial Cauchy data based on three key innovations: data completion, heterogeneously regularized DtN map and stabilization-correction scheme.
\subsection{Data completion}\label{subsec:dc}

To address the challenge of partial data, we employ a data completion procedure that leverages the available Cauchy data on $\Gamma_D$ to reconstruct the missing data on $\Gamma_N$.
We describe the approach for diffuse optical tomography (DOT) (cf. \eqref{eqn3}).

Consider the inverse operator $\mathcal{A}^{-1}$, which acts through Green's function $G(x,x')$:
\begin{equation*}
\left\{
\begin{aligned}
- \nabla_x \cdot \left( c_0 \nabla_x G(x,x') \right) + p_0 G(x,x') &= \delta_{x-x'}, & \text{in } \Omega, \\
\partial_{n}G(x,x') &= 0, & \text{on } \Gamma.
\end{aligned}
\right.
\end{equation*}
$G(x,x')$ represents the action of $\mathcal{A}^{-1}$ via the integral operator $z(x) = \int_{\Omega} G(x,x') w(x') \, {\rm d}x'$.
The operator $\mathcal{B}_\tau$ defined in \eqref{eqn9} is given by
\begin{equation*}
\langle \mathcal{B}_\tau[y]u, v \rangle_{(H^1(\Omega))', H^1(\Omega)} = \langle\mathcal{B}[u]y,v\rangle_{(H^{1}(\Omega))', H^{1}(\Omega)} = \int_{\Omega} [u_{c},u_{p}] [\nabla y \cdot \nabla v , y v]^{\top} {\rm d}x.
\end{equation*}
Consequently,
\begin{equation*}
\begin{aligned}
y^{s}(x) =& \left( \mathcal{T} \mathcal{A}^{-1} \mathcal{B}_\tau[y(u_*)]u_* \right)(x) = \int_{\Omega} G(x,x') \left[ u_{c*}(x') \nabla_{x'} y(u_*)(x') \cdot \nabla_{x'} + u_{a*}(x') y(u_*)(x') \right] {\rm d}x'\\
\approx &\sum_{i} w_{i} \left[ u_{c*}(x_{i}) y(u_*)(x_{i}) G(x_{i},x) + u_{a*}(x_{i}) \nabla y(u_*)(x_{i}) \cdot \nabla_{x_{i}} G(x_{i},x) \right],
\end{aligned}
\end{equation*}
where $w_{i}$ denotes quadrature weights { and $x_{i}$ are sampling points inside the support of the inclusion. This formal derivation aligns with the rigorous treatment in \cite[(3.12)]{Chow2021}.}
Thus, $y^{s}$ on the boundary $\Gamma_{N}$ can be expressed as a linear combination of the Dirichlet traces of $G(\cdot,x)$ and $\nabla G(\cdot,x)$, whose coefficients can be determined from the available measurements on $\Gamma_D$.

To compensate the missing data on $\Gamma_N$, we exploit the iterative estimate from the fixed-point scheme \eqref{eqn11}.
Given an approximate potential $y(u^{k})$, the boundary value on $\Gamma$ is given by
\begin{equation*}
y_{\emptyset}(u^{k})(x) - y(u^{k})(x) \approx \sum_{i} w_{i} \left[ u^{k}_{c}(x_{i}) y(u^{k})(x_{i}) G(x_{i},x) + u^{k}_{p}(x_{i}) \nabla y(u^{k})(x_{i}) \cdot \nabla_{x_{i}} G(x_{i},x) \right],
\end{equation*}
where the linear coefficients $u^{k}_{c}(x_{i}) y(u^{k})(x_{i})$ and $u^{k}_{p}(x_{i}) \nabla y(u^{k})(x_{i})$ are derived from the current estimate $u^{k}$, and approximate the exact ones $u_{c*}(x_{i}) y(u_{*})(x_{i})$ and $u_{a*}(x_{i}) \nabla y(u_{*})(x_{i})$.
This motivates the the following data completion formula:
\begin{equation}\label{eqn31}
\tilde{y}_{d}(u^{k}) = y_{d} + \mathcal{T}_N y(u^{k})=\mathcal{T}_{D}y(u_{*})+ \mathcal{T}_N y(u^{k})+\varepsilon.
\end{equation}
At each iteration, the scattering field is given by $\tilde{y}^{s}_{d}(u^{k})=\mathcal{T}y_{\emptyset}(u^{k})-\tilde{y}_{d}(u^{k})$.
Then the index function is computed by
\begin{equation*}
\eta^{k+1}=\mathcal{R}\mathcal{H}[u^{k}]^{*}\tilde{y}^{s}_{d}(u^{k})=\mathcal{R}\mathcal{H}[u^{k}]^{*}\left(\mathcal{T}y_{\emptyset}(u^{k})-y_{d} - \mathcal{T}_N y(u^{k})\right).
\end{equation*}
This data completion procedure leverages the current state estimate $y(u^k)$ to approximate the missing boundary condition on $\Gamma_N$.
However, since the estimate $u^k$ (and also $y(u^k)$) may deviate from the true solution, especially in early iterations, the completed data $\mathcal{T}_N y(u^k)$ on $\Gamma_N$ is only approximate.
The error necessitates incorporating specialized regularization strategies to mitigate its adverse impact on the reconstruction stability.

\subsection{HR-DtN map}\label{subsec:hrdm}
To address the challenge of the ill-posedness in $\mathcal{G}[u]$, we now propose a novel heterogeneously regularized Dirichlet-to-Neumann (HR-DtN) map $\Lambda_{\alpha,D}(\mathcal{A})$.
It is designed to simultaneously enhance the accuracy of the reconstruction, suppress noise on the accessible boundary $\Gamma_D$, and mitigate the errors from the data completion procedure on $\Gamma_N$.

The inherent ill-posedness of inverting $\mathcal{G}[u]$ stems from its failure to induce a dual product that can distinguish inclusions with disjoint supports.
Thus, the dual function $\zeta = \mathcal{G}[u]u$ exhibits low sensitivity to the spatial distribution of the inclusion $u$.
{
This is quantified by the cross term: for $u_1, u_2 \in L^{\infty}(\Omega)$ with disjoint supports, $\langle \mathcal{G}[u]u_1, u_2 \rangle$ decays only slowly, whereas it should decay rapidly with the separation distance in order to resolve distinct inclusions.}
The inability of $\mathcal{G}[u]$ to make the cross term small when $u_1$ and $u_2$ have disjoint supports limits its resolving power.
To address the issue, prior studies \cite{Zou2015,Zou2014} replace the $L^2(\Gamma)$-adjoint $\mathcal{H}[u]^*$ with an $H^{\mu}(\Gamma)$-adjoint $\mathcal{H}_{\mu}[u]^*$, by exploiting the near-orthogonality relation 
\begin{equation*}
\frac{(G(x,\cdot), G(x',\cdot))_{H^{\mu}(\Gamma)}}{\|G(x,\cdot)\|_{H^{\mu}(\Gamma)}\|G(x',\cdot)\|_{H^{\mu}(\Gamma)}} \approx
\begin{cases}
1, & \text{if $x$ is close to $x'$,} \\
\ll 1, & \text{otherwise}.
\end{cases}
\end{equation*}
{With the $H^{\mu}(\Gamma)$-adjoint, we can formally write for $u_{1}$, $u_{2}$ with disjoint supports that}
\begin{equation*}
\langle \mathcal{G}_{\mu}[u]u_1, u_2 \rangle_{(L^\infty(\Omega))', L^\infty(\Omega)} \triangleq\langle \mathcal{H}_{\mu}[u]^* \mathcal{H}[u]u_{1},u_{2}\rangle_{(L^{\infty}(\Omega))',L^{\infty}(\Omega)}\triangleq (\mathcal{H}[u]u_1, \mathcal{H}[u]u_2)_{H^{\mu}(\Gamma)} \approx 0.
\end{equation*}
That is, it leads to negligible cross term with respect to $\mathcal{G}_{\mu}[u]$, which enhances the sensitivity to inclusion distributions.
However, the $H^{\mu}(\Gamma)$ inner product involves differentiating the measured data, which induces instability in the presence of noise.
{We proposed a regularized DtN map $\Lambda_{\alpha}(\mathcal{A})$ in \cite{Ito2025Iterative}, }
which interpolates between the $H^{1/2}(\Gamma)$-inner product (as $\alpha \to 0^+$) and the $L^{2}(\Gamma)$-inner product (as $\alpha \to \infty$).
This achieves an optimal balance, enhancing the sensitivity of the index function to inclusion location while maintaining robustness to data noise.

However, $\Lambda_{\alpha}(\mathcal{A})$ with a uniform regularization parameter $\alpha$ is not directly applicable in partial data setting.
The completed data $\tilde{y}_{d}(u^{k}) = y_{d} + \mathcal{T}_{N}y(u^{k})$ exhibits heterogeneous fidelity: $\tilde{y}_{d}(u^{k})|_{\Gamma_{D}} = y_d$ is the actual measurements, but $\tilde{y}_{d}(u^{k})|_{\Gamma_{N}} = \mathcal{T}_{N}y(u^{k})$ is an approximation whose accuracy depends on the current iterate $u^{k}$.
To accommodate the disparity, we employ a spatially dependent regularization parameter $\alpha_D = \alpha_{d}\chi_{\Gamma_{D}}+\alpha_{n}\chi_{\Gamma_{N}}$, where $0 < \alpha_{d} \ll \alpha_{n}$.
{We will explain this choice later on.}
For a given $v \in L^2(\Gamma)$ and the elliptic operator $\mathcal{A} \in \mathcal{L}(H^1(\Omega), (H^1(\Omega))')$, the HR-DtN map $\Lambda_{\alpha,D}(\mathcal{A})v = p$ is defined by the pair $(w, p) \in H^1(\Omega) \times L^2(\Gamma)$ satisfying
\begin{equation}\label{eqn12}
\left\{
\begin{aligned}
\langle \mathcal{A}w, z \rangle_{(H^1(\Omega))', H^1(\Omega)} - (p, \mathcal{T}z)_{L^2(\Gamma)} &= 0, \quad &\forall z \in H^1(\Omega), \\
(\mathcal{T}w, q)_{L^2(\Gamma)} + (\alpha_D p, q)_{L^2(\Gamma)} &= (v, q)_{L^2(\Gamma)}, \quad &\forall q \in L^2(\Gamma).
\end{aligned}
\right.
\end{equation}
The following lemma gives the well-posedness of $\Lambda_{\alpha,D}(\mathcal{A})$ and a  stability estimate.

\begin{lemma}\label{lemma1}
Let the elliptic operator $\mathcal{A} \in \mathcal{L}(H^{1}(\Omega), H^{1}(\Omega)')$ satisfy the following conditions:
\begin{equation*}
\begin{aligned}
m \|w\|_{H^{1}(\Omega)}^{2} \leq \langle \mathcal{A}w, w \rangle_{(H^{1}(\Omega))', H^{1}(\Omega)},&\quad\forall w\in H^{1}(\Omega), \\
\langle \mathcal{A}w, z \rangle_{(H^{1}(\Omega))', H^{1}(\Omega)} \leq M \|w\|_{H^{1}(\Omega)} \|z\|_{H^{1}(\Omega)},&\quad\forall w,z\in H^{1}(\Omega),
\end{aligned}
\end{equation*}
where $m, M > 0$ are constants.
Then, there exists a unique solution $p \in L^{2}(\Gamma)$ of \eqref{eqn12} such that
\begin{equation*}
\alpha_{d} \|p\|_{L^{2}(\Gamma_{D})}^{2} + \alpha_{n} \|p\|_{L^{2}(\Gamma_{N})}^{2} \leq \alpha_{d}^{-1}\|v\|_{L^{2}(\Gamma_{D})}^{2} + \alpha_{n}^{-1} \|v\|_{L^{2}(\Gamma_{N})}^{2}.
\end{equation*}
\end{lemma}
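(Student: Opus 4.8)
The plan is to eliminate $w$ from \eqref{eqn12} and reduce the system to a single coercive equation posed on $L^{2}(\Gamma)$. By the Lax--Milgram lemma, the stated coercivity and continuity of $\mathcal{A}$ make $\mathcal{A}:H^{1}(\Omega)\to(H^{1}(\Omega))'$ boundedly invertible with $\|\mathcal{A}^{-1}\|\le m^{-1}$. Since the trace map $\mathcal{T}:H^{1}(\Omega)\to L^{2}(\Gamma)$ is bounded, so is its adjoint $\mathcal{T}^{*}:L^{2}(\Gamma)\to(H^{1}(\Omega))'$, and the first line of \eqref{eqn12} reads $\mathcal{A}w=\mathcal{T}^{*}p$, i.e., $w=\mathcal{A}^{-1}\mathcal{T}^{*}p$. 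Substituting into the second line yields the reduced problem: find $p\in L^{2}(\Gamma)$ such that
\begin{equation*}
(\mathcal{T}\mathcal{A}^{-1}\mathcal{T}^{*}p,q)_{L^{2}(\Gamma)}+(\alpha_{D}p,q)_{L^{2}(\Gamma)}=(v,q)_{L^{2}(\Gamma)},\qquad\forall q\in L^{2}(\Gamma).
\end{equation*}

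Next I would verify that the bilinear form on the left is continuous and coercive on $L^{2}(\Gamma)$, so that a second application of the (non-symmetric) Lax--Milgram lemma furnishes a unique $p$, from which $w=\mathcal{A}^{-1}\mathcal{T}^{*}p$ recovers the unique solution of \eqref{eqn12}. Continuity follows from $\|\mathcal{A}^{-1}\|\le m^{-1}$, the boundedness of $\mathcal{T}$, and $\alpha_{D}\in L^{\infty}(\Gamma)$. For coercivity, setting $g:=\mathcal{T}^{*}p$ and $w:=\mathcal{A}^{-1}g$ gives $(\mathcal{T}\mathcal{A}^{-1}\mathcal{T}^{*}p,p)_{L^{2}(\Gamma)}=\langle g,\mathcal{A}^{-1}g\rangle_{(H^{1}(\Omega))',H^{1}(\Omega)}=\langle\mathcal{A}w,w\rangle_{(H^{1}(\Omega))',H^{1}(\Omega)}\ge m\|w\|_{H^{1}(\Omega)}^{2}\ge0$, so the form dominates $(\alpha_{D}p,p)_{L^{2}(\Gamma)}\ge\alpha_{d}\|p\|_{L^{2}(\Gamma)}^{2}$, since $0<\alpha_{d}\le\alpha_{D}$ a.e. on $\Gamma$.

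For the stability bound I would return to the coupled system: testing the first line of \eqref{eqn12} with $z=w$ and the second with $q=p$, and adding, gives
\begin{equation*}
\langle\mathcal{A}w,w\rangle_{(H^{1}(\Omega))',H^{1}(\Omega)}+\alpha_{d}\|p\|_{L^{2}(\Gamma_{D})}^{2}+\alpha_{n}\|p\|_{L^{2}(\Gamma_{N})}^{2}=(v,p)_{L^{2}(\Gamma_{D})}+(v,p)_{L^{2}(\Gamma_{N})}.
\end{equation*}
Dropping the nonnegative term $\langle\mathcal{A}w,w\rangle\ge0$ and estimating the right-hand side by Cauchy--Schwarz and the weighted Young inequality $st\le\tfrac{1}{2\lambda}s^{2}+\tfrac{\lambda}{2}t^{2}$ with $\lambda=\alpha_{d}$ on $\Gamma_{D}$ and $\lambda=\alpha_{n}$ on $\Gamma_{N}$ gives
\begin{equation*}
\alpha_{d}\|p\|_{L^{2}(\Gamma_{D})}^{2}+\alpha_{n}\|p\|_{L^{2}(\Gamma_{N})}^{2}\le\tfrac{1}{2\alpha_{d}}\|v\|_{L^{2}(\Gamma_{D})}^{2}+\tfrac{\alpha_{d}}{2}\|p\|_{L^{2}(\Gamma_{D})}^{2}+\tfrac{1}{2\alpha_{n}}\|v\|_{L^{2}(\Gamma_{N})}^{2}+\tfrac{\alpha_{n}}{2}\|p\|_{L^{2}(\Gamma_{N})}^{2},
\end{equation*}
and absorbing the half-weighted $p$-terms into the left-hand side and multiplying by $2$ produces exactly the asserted inequality.

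I do not expect a genuine obstacle here; the points that require care are that $\mathcal{A}$ is not assumed symmetric, so one must rely on positive semidefiniteness of $\mathcal{T}\mathcal{A}^{-1}\mathcal{T}^{*}$ (inherited from $\langle\mathcal{A}w,w\rangle\ge0$) together with the non-symmetric Lax--Milgram lemma rather than self-adjointness, and that the trace adjoint $\mathcal{T}^{*}:L^{2}(\Gamma)\to(H^{1}(\Omega))'$ must be tracked carefully so that the composition $\mathcal{T}\mathcal{A}^{-1}\mathcal{T}^{*}$ is well defined on $L^{2}(\Gamma)$. An alternative would be to invoke Brezzi's saddle-point theory directly on \eqref{eqn12}, but verifying the inf--sup condition there is more cumbersome than the elimination above.
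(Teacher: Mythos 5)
Your proposal is correct, but it takes a genuinely different route from the paper. The paper keeps the coupled system, rescales the multiplier via $\tilde{p}=\sqrt{\alpha_{D}}\,p$, and applies Lax--Milgram once on the product space $H^{1}(\Omega)\times L^{2}(\Gamma)$: the skew-symmetric cross terms cancel, so the product-space form is coercive with constant $\min(m,1)$ independent of $\alpha_{D}$, and the stability bound falls out of the chain $\|\tilde{p}\|_{L^{2}(\Gamma)}^{2}\leq a([w,\tilde{p}],[w,\tilde{p}])=b([w,\tilde{p}])\leq\|\alpha_{D}^{-1/2}v\|_{L^{2}(\Gamma)}\|\tilde{p}\|_{L^{2}(\Gamma)}$. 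You instead perform a Schur-complement elimination of $w$, reducing to a single coercive equation for $p$ on $L^{2}(\Gamma)$ with the operator $\mathcal{T}\mathcal{A}^{-1}\mathcal{T}^{*}+\alpha_{D}$, and then recover the estimate by the energy identity $\langle\mathcal{A}w,w\rangle+(\alpha_{D}p,p)_{L^{2}(\Gamma)}=(v,p)_{L^{2}(\Gamma)}$ plus weighted Young. Both arguments are sound and exploit the same cancellation of the off-diagonal terms (you use it in the energy identity, the paper in the coercivity of $a$). Your reduction has the conceptual advantage of making the Neumann-to-Dirichlet structure $\mathcal{T}\mathcal{A}^{-1}\mathcal{T}^{*}$ explicit, which is natural given that the lemma concerns a regularized DtN map; the price is that the coercivity constant of your reduced form is $\min(\alpha_{d},\alpha_{n})=\alpha_{d}$, which degenerates as $\alpha_{d}\to0^{+}$, whereas the paper's rescaled form stays uniformly coercive. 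Since $\alpha_{d}>0$ is fixed here and the final stability bound you obtain is identical to the paper's, this difference is immaterial for the statement.
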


\begin{proof}
Setting $\tilde{p} = \sqrt{\alpha_{D}} p$ transforms problem \eqref{eqn12} into
\begin{equation*}
\left\{
\begin{aligned}
\langle \mathcal{A}w, z\rangle_{(H^{1}(\Omega))', H^{1}(\Omega)} - ( \alpha_D^{-1/2}\tilde{p}, \mathcal{T}z )_{L^{2}(\Gamma)} &= 0, & \forall z \in H^{1}(\Omega), \\
(\mathcal{T}w, q)_{L^{2}(\Gamma)} + (\sqrt{\alpha_{D}} \tilde{p}, q)_{L^{2}(\Gamma)} &= (v, q)_{L^{2}(\Gamma)}, & \forall q \in L^{2}(\Gamma),
\end{aligned}
\right.
\end{equation*}
which can be rewritten as
\begin{equation}\label{eqn32}
\left\{
\begin{aligned}
\langle \mathcal{A}w, z\rangle_{(H^{1}(\Omega))', H^{1}(\Omega)} - (\alpha_D^{-1/2} \tilde{p}, \mathcal{T}z )_{L^{2}(\Gamma)} &= 0, & \forall z \in H^{1}(\Omega), \\
( \alpha_D^{-1/2}\mathcal{T}w, \tilde{q})_{L^{2}(\Gamma)} + (\tilde{p}, \tilde{q})_{L^{2}(\Gamma)} &= (\alpha_D^{-1/2} v, \tilde{q} )_{L^{2}(\Gamma)}, & \forall \tilde{q} = \sqrt{\alpha_{D}} q \in L^{2}(\Gamma).
\end{aligned}
\right.
\end{equation}
It defines a variational problem over the product space $H^{1}(\Omega) \times L^{2}(\Gamma)$.
We define a bilinear form $a: [H^{1}(\Omega) \times L^{2}(\Gamma)]^{2} \to \mathbb{R}$ and a linear functional $b: H^{1}(\Omega) \times L^{2}(\Gamma) \to \mathbb{R}$ respectively by
\begin{align*}
a\left( [w, \tilde{p}], [z, \tilde{q}] \right) =& \langle \mathcal{A}w, z \rangle_{(H^{1}(\Omega))', H^{1}(\Omega)} - (\alpha_D^{-1/2}\tilde{p}, \mathcal{T}z)_{L^{2}(\Gamma)} + (\alpha_D^{-1/2} \mathcal{T}w, \tilde{q} )_{L^{2}(\Gamma)} + (\tilde{p}, \tilde{q})_{L^{2}(\Gamma)}, \\
b\left( [z, \tilde{q}] \right) =& (\alpha_D^{-1/2} v, \tilde{q} )_{L^{2}(\Gamma)}.
\end{align*}
The continuity of $\mathcal{A}$ and $\mathcal{T}$ ensures that $a$ and $b$ are continuous.
The coercivity of $a$ follows by
\begin{align*}
a([w, \tilde{p}], [w, \tilde{p}]) =& \langle \mathcal{A}w, w \rangle_{(H^{1}(\Omega))', H^{1}(\Omega)} - (\alpha_D^{-1/2} \tilde{p}, \mathcal{T}w )_{L^{2}(\Gamma)} + (\alpha_D^{-1/2} \mathcal{T}w, \tilde{p} )_{L^{2}(\Gamma)} + \|\tilde{p}\|_{L^{2}(\Gamma)}^{2} \\
=& \langle \mathcal{A}w, w \rangle_{(H^{1}(\Omega))', H^{1}(\Omega)} + \|\tilde{p}\|_{L^{2}(\Gamma)}^{2}\geq m \|w\|_{H^{1}(\Omega)}^{2} + \|\tilde{p}\|_{L^{2}(\Gamma)}^{2}.
\end{align*}
By Lax-Milgram theorem, there exists a unique solution $[w, \tilde{p}] \in H^{1}(\Omega) \times L^{2}(\Gamma)$ to the variational problem \eqref{eqn32}, implying the uniqueness of $p \in L^{2}(\Gamma)$.
For the stability estimate, we observe
\begin{equation*}
\|\tilde{p}\|_{L^{2}(\Gamma)}^{2} \leq a([w, \tilde{p}], [w, \tilde{p}]) = b([w, \tilde{p}]).
\end{equation*}
The definition of $b$ and the Cauchy-Schwarz inequality imply $$\|b\|_{(H^{1}(\Omega) \times L^{2}(\Gamma))'} \leq \|\alpha_D^{-1/2} v\|_{L^{2}(\Gamma)}.$$
This, the coercivity bound and the substitution $\tilde{p} = \sqrt{\alpha_{D}} \, p$ yield the desired assertion.
\end{proof}

We adopt the adjoint operator $\mathcal{H}_{D}[u]^{*}$ as the lifting operator in the IDSM:
\begin{equation*}
\langle \mathcal{H}_{D}[u]^{*} y, v \rangle_{(L^\infty(\Omega))', L^\infty(\Omega)} = \left( \Lambda_{\alpha,D}(\mathcal{A}[y(u)]) y, \Lambda_{\alpha,D}(\mathcal{A}[y(u)]) \mathcal{H}[u] v \right)_{L^2(\Gamma)},\,\forall y,v \in L^{2}(\Gamma)\times L^\infty(\Omega),
\end{equation*}
where the choice of $\mathcal{A}[y(u)]$ is to reduce the computation cost, cf. Lemma \ref{lemma2}.
We denote by $\mathcal{G}_{D}[u]$ its Gramian, i.e., $\mathcal{G}_{D}[u] = \mathcal{H}_{D}[u]^{*}\mathcal{H}[u]$.
The adjoint operator $\mathcal{H}_D[u]^*$ offers distinct advantages over existing approaches in the partial data setting.
First, it preserves the near-orthogonality of the probing functions on the accessible boundary $\Gamma_D$ with small $\alpha_{d}$.
Second, it delivers superior noise robustness compared to the $H^{\mu}(\Gamma)$-adjoint, by  avoiding explicitly differentiating the measured data.
Most significantly, the choice $(\alpha_{d}, \alpha_{n})$ accommodates  the heterogeneity of data quality.
A small $\alpha_{d}$ on $\Gamma_D$ retains data fidelity (i.e., high-frequency information in the measurements), and a large $\alpha_{n}$ on $\Gamma_N$ suppresses errors and uncertainties of the completed data.
Finally, the design is flexible with parameter selection and measurement configuration{, since the parameters can be adapted to different boundary measurement configurations.}

\subsection{Stabilization-correction scheme}\label{subsec:sroc}

The construction of the resolver $\mathcal{R}$ is central to the IDSM, which impacts greatly the quality of the index function $\eta${; see section \ref{sec:MAT} for the case with full boundary data}.
An effective $\mathcal{R}$ must fulfill three  requirements: (i) computational efficiency to ensure practicality within an iterative framework; (ii) high accuracy in $\mathcal{R}\mathcal{G}_{D}[u] \approx \mathcal{I}_{L^{\infty}(\Omega)}$; (iii) noise stability.
{
We developed an efficient approach to construct $\mathcal{R}$ in \cite{Ito2025Iterative} when full boundary data is available. 
Now we develop a novel and stable scheme for constructing $\mathcal{R}$ that satisfies these criteria when only partial boundary data is available.}
Furthermore, we  establish the uniform boundedness of the resulting operator, which guarantees the robustness of the overall iterative process.

{For computational efficiency, the DSM \cite{Zou2014,Zou2015,Chow2021} employs a delta-type resolver $\mathcal{R}=D\delta$:
\begin{equation}\label{eqn13}
\eta(x) = (\mathcal{R}^{0}\zeta)(x)=\int_{x'}D(x)\delta(x'-x)\zeta(x')\,{\rm d}x = D(x)\zeta(x).
\end{equation}}
In the full-data case, $D(x)$ is commonly constructed as $D(x) = \|G(x,\cdot)\|_{H^{\mu}(\Gamma)}^{-2}$.
This choice is motivated by the formal duality pairing $\langle \delta_{x},\mathcal{G}_{\mu}[u]\delta_{x}\rangle_{(L^{\infty}(\Omega))',L^{\infty}(\Omega)} = \|G(x,\cdot)\|_{H^{\mu}(\Gamma)}^{2}$.
Asymptotically, $G(x, x')$ behaves like the fundamental solution $\Phi_x(x')$ associated with $\mathcal{A}$ (e.g., $\Phi_x(x') = -\frac{1}{2\pi}\ln|x - x'|$ for $\mathcal{A} = -\Delta$ in $\mathbb{R}^2$), satisfying $\lim_{x' \to x} G(x, x') / \Phi_x(x') = 1$.
It suggests the choice $D(x) = \|\Phi_x\|_{H^{\mu}(\Gamma)}^{-2}$.
Also the distance-to-boundary $d(x, \Gamma) = \inf_{x' \in \Gamma} |x - x'|$ can be used: $D(x) = d(x,\Gamma)^{\gamma}\approx G(x,x)^{-2}$, where the exponent $\gamma > 0$ is determined empirically.

In the partial data case, the non-uniform behavior of $\Lambda_{\alpha,D}(\mathcal{A})$ precludes the use of a diagonal $D(x)$.
$\Lambda_{\alpha,D}(\mathcal{A})$ adopts a heterogeneous regularization: a small $\alpha_{d}$ on $\Gamma_D$ endows the map with $(-\Delta_\Gamma)^{1/2}$-like behavior, while a large $\alpha_{n}$ on $\Gamma_N$ reduces to an $L^2(\Gamma)$ form.
Thus we define
\begin{equation}\label{eqn16}
D(x) = C_{D}\left\| \Phi_x \frac{\alpha_{D}}{1+\alpha_{D}} + |\nabla \Phi_x|\frac{1}{1+\alpha_{D}} \right\|_{L^2(\Gamma)}^{-\gamma}\chi_{\Omega^{\varepsilon}}(x),
\end{equation}
with $\Omega^{\varepsilon}=\{x\in \Omega:\,d(x,\Gamma)\geq \varepsilon\}$ {to ensure that $D(x)$ vanishes near the boundary $\Gamma$.}
This design leverages the asymptotic properties of $\Phi_x$ to emulate Green's function.
The scaling constant is $C_D = \|\hat{\eta}^{k+1}\|_{L^{1}(\Omega)} / \|D\hat{\zeta}^{k+1}\|_{L^{1}(\Omega)}$, where $\hat{\eta}^{k+1}$ and $\hat{\zeta}^{k+1}$ are the auxiliary index function and dual function, ensures the consistency with the current iteration.

While \eqref{eqn13} is computationally efficient, its accuracy is limited.
This motivates the use of a hybrid formulation for $\mathcal{R}^{k} = D\delta + \sum_{j=1}^{K} n_{j} \otimes m_{j}$ in the IDSM \cite{Ito2025Iterative}:
\begin{equation}\label{eqn17}
\left(\mathcal{R}^{k}\zeta\right)(x) = D(x)\zeta(x) + \sum_{j=1}^{K} \left( \int_{\Omega} m_{j}(x') \zeta(x') \, {\rm d}x' \right) n_j(x).
\end{equation}
The low-rank terms $n_{j} \otimes m_{j}$ are constructed by the iterative scheme \eqref{eqn11} on the fly.

Specifically, at the $k$th iteration, having obtained $u^{k+1}=\mathcal{P}_{[a,b]}(\eta^{k+1})$, we compute the state $y(u^{k+1})$ via \eqref{eqn1}.
Next we compute the auxiliary scattering field
\begin{equation*}
\hat{y}^{s}(u^{k+1}) \triangleq \mathcal{T}y_{\emptyset}(u^{k+1}) - \mathcal{T}y(u^{k+1}) = \mathcal{T}\mathcal{A}[y(u^{k+1})]^{-1}\mathcal{B}_{\tau}[y(u^{k+1})]u^{k+1} = \mathcal{H}[u^{k+1}]u^{k+1}.
\end{equation*}
Applying the adjoint HR-DtN operator yields the auxiliary dual function
\begin{equation*}
\hat{\zeta}^{k+1} = \mathcal{H}_{D}[u^{k+1}]^{*}\hat{y}^{s}(u^{k+1}) = \mathcal{H}_{D}[u^{k+1}]^{*}\mathcal{H}[u^{k+1}]u^{k+1} = \mathcal{G}_{D}[u^{k+1}]u^{k+1}.
\end{equation*}
The auxiliary pair $(u^{k+1}, \hat{\zeta}^{k+1})$ is then used to update the $\mathcal{R}^{k}$ by enforcing 
\begin{equation}\label{eqn15}
\mathcal{R}^{k+1}\hat{\zeta}^{k+1} = u^{k+1}.
\end{equation}
This condition ensures the exact reconstruction for the auxiliary problem.
\eqref{eqn15} is analogous to the secant condition in quasi-Newton methods.
Formally, $\mathcal{R}^{k+1}$ approximates the inverse of $\mathcal{G}_D[u^{k+1}]$, like a quasi-Newton matrix approximates the inverse Hessian.
This analogy motivates Broyden-type updates, e.g., Davidon–Fletcher–Powell (DFP) or Broyden–Fletcher–Goldfarb (BFG) formulae (cf. \cite[pp. 149]{Wright2006} and \cite[pp. 169]{Schnabel1996}), to construct the low-rank term in $\mathcal{R}$ iteratively.
It provides an efficient method to enforce \eqref{eqn15} while ensuring the symmetric positive definiteness of $\mathcal{R}$.

One critical limitation of existing resolvers in the IDSM is their inherent lack of stability.
Consider the delta-type operator $\eta(x) = D(x)\zeta(x)$ defined in \eqref{eqn13}.
Since the dual function $\zeta$ resides in $(L^{\infty}(\Omega))'$, which contains unbounded elements, the image $\eta$ generally falls outside $L^{\infty}(\Omega)$, and $\mathcal{R}$ is unbounded from $(L^{\infty}(\Omega))'$ to $L^{\infty}(\Omega)$.
The low-rank term also suffers from unboundedness in iterative schemes: as the iteration progresses, $\mathcal{R}^{k}$ approximates the unbounded inverse of the compact Gramian, and amplifies the errors uncontrollably.
The situation is exacerbated in the partial data setting, which usually necessitates more iterations, further aggravating the instability.
Moreover, \eqref{eqn15} leads to
\begin{equation*}
\mathcal{R}^{k+1}\hat{\zeta}^{k+1} = \mathcal{R}^{k+1}\mathcal{G}_{D}[u^{k+1}]u^{k+1} = u^{k+1} \quad \Rightarrow \quad \mathcal{R}^{k+1}\mathcal{G}_{D}[u^{k+1}] \approx \mathcal{I}_{L^{\infty}}.
\end{equation*}
However, the initial estimates $u^{k+1}$ are highly inaccurate, and $\mathcal{G}_D[u^{k+1}]$ poorly approximate $\mathcal{G}_D[u_{*}]$. Thus, the auxiliary pair $(u^{k+1}, \hat{\zeta}^{k+1})$ gives suboptimal corrections that degrade the convergence.
To address this issue, we propose a stabilization-correction scheme by employing a damping factor $\lambda_{k,p}$ to discount inaccurate auxiliary pairs.
It dynamically adjusts according to the accuracy of the auxiliary problem: higher-quality reconstructions result in a diminished damping factor, while lower-quality reconstructions lead to a larger damping factor.

For the singular part of the resolver, we define
\begin{equation}\label{eqn18}
\left(\mathcal{R}^{0}\zeta\right)(x) = \frac{D(x)^{1/2}}{|Q_{x}|^{1/2}}\left\langle \zeta(x'), \frac{D(x')^{1/2}}{|Q_{x'}|^{1/2}}\chi_{Q_{x}}(x') \right\rangle_{(L^{\infty}(\Omega))',L^{\infty}(\Omega)},
\end{equation}
where $D(x)$ is a bounded positive function, $Q_x \subset \Omega$ is a local neighborhood of $x$, and $|Q_x|$ and $\chi_{Q_x}$ denote its Lebesgue measure and characteristic function, respectively.
Similarly, we define the stabilizer $\mathcal{S} : (L^\infty(\Omega))' \to L^\infty(\Omega)$ as  
\begin{equation*}
\left(\mathcal{S}\zeta\right)(x) = \frac{1}{\sqrt{|Q_{x}|}}\int_{x'\in Q_{x}} \frac{1}{\sqrt{|Q_{x'}|}}\zeta(x') \, {\rm d}x'.
\end{equation*}
Let $h = \inf_{x \in \Omega} |Q_x|>0$.
Then $\mathcal{R}^0$ and $\mathcal{S}$ is bounded in the sense that  
\begin{equation}\label{eqn20}
\begin{aligned}
{}&\|\mathcal{R}^{0}\zeta\|_{L^{\infty}(\Omega)} \leq h^{-1}\|D\|_{L^{\infty}(\Omega)} \|\zeta\|_{L^{1}(\Omega)}, \quad \forall \zeta \in L^{1}(\Omega),\\
{}&\|\mathcal{S} \zeta\|_{L^\infty(\Omega)} \leq h^{-1} \|\zeta\|_{L^\infty(\Omega)}, \quad \forall \zeta \in L^{1}(\Omega),
\end{aligned}
\end{equation}
alongside the duality property of Banach spaces:
\begin{equation*}
\|f\|_{X'} = \sup_{x \in X \setminus \{0\}} \frac{\langle f, x \rangle_{X',X}}{\|x\|_{X}} = \sup_{x \in X'' \setminus \{0\}} \frac{\langle x, f \rangle_{X'',X'}}{\|x\|_{X''}}.
\end{equation*}
Moreover, due to the local support of $Q_x$, both $\mathcal{R}^0$ and $\mathcal{S}$ are efficient to compute.

Then we discuss the low-rank part of the resolver.
The notation $\langle\cdot,\cdot\rangle$ denotes the duality pairing between $(L^{\infty}(\Omega))'$ and $L^{\infty}(\Omega)$.
We replace condition \eqref{eqn15} with
\begin{equation*}
\mathcal{R}^{k+1}\hat{\zeta}^{k+1} = \hat{\eta}^{k+1} \quad \text{such that} \quad \mathcal{P}_{[a,b]}(\hat{\eta}^{k+1}) = u^{k+1},
\end{equation*}
where $\hat{\eta}^{k+1}$ denotes the auxiliary index function, whose explicit construction is given in \eqref{eqn24}.
At each iteration, $\mathcal{R}$ is updated through a stabilization-correction process.

For the stabilization stage, we introduce a stabilized resolver $\widetilde{\mathcal{R}}^{k}$ defined recursively by
\begin{equation}\label{eqn26}
\widetilde{\mathcal{R}}^{k} = \mathcal{R}^{0} + \frac{1}{1 + \lambda_{k-1,p}}\left(\widetilde{\mathcal{R}}^{k-1}-\mathcal{R}^{0}+\mathcal{S}\left(\mathcal{R}^{k}-\widetilde{\mathcal{R}}^{k-1}\right)\mathcal{S}\right),
\end{equation}
where $p \in [1, \infty]$ is an integrability index.
The critical component is the damping parameter $\lambda_{k-1,p} \geq 0$, which is computed from the auxiliary variables $\hat{\eta}^k$ and $\hat{\zeta}^k$ at iteration $k-1$.
For the DFP and BFG update schemes, it is given respectively by
\begin{equation}\label{eqn33}
\begin{aligned}
\lambda_{k-1,p}^{\rm DFP} &=
C_{\lambda}|\Omega|^{2/p^{*}}\bigg\| \frac{\hat{\eta}^k}{\sqrt{\langle \hat{\zeta}^k, \hat{\eta}^k \rangle}} + \frac{\widetilde{\mathcal{R}}^{k-1} \hat{\zeta}^k}{\sqrt{\langle \hat{\zeta}^k, \widetilde{\mathcal{R}}^{k-1} \hat{\zeta}^k \rangle}} \bigg\|_{L^{p}(\Omega)} \bigg\| \frac{\hat{\eta}^k}{\sqrt{\langle \hat{\zeta}^k, \hat{\eta}^k \rangle}} - \frac{\widetilde{\mathcal{R}}^{k-1} \hat{\zeta}^k}{\sqrt{\langle \hat{\zeta}^k, \widetilde{\mathcal{R}}^{k-1} \hat{\zeta}^k \rangle}} \bigg\|_{L^{p}(\Omega)},\\
\lambda_{k-1,p}^{\rm BFG} &=
C_{\lambda}|\Omega|^{2/p^{*}}\frac{\|\hat{\eta}^k\|_{L^{p}(\Omega)}}{\langle \hat{\zeta}^k, \hat{\eta}^k \rangle}  \bigg\| \left( 2 - \frac{\hat{\eta}^k \otimes \hat{\zeta}^k}{\langle \hat{\zeta}^k, \hat{\eta}^k \rangle} \right) \left( \hat{\eta}^k - \widetilde{\mathcal{R}}^{k-1} \hat{\zeta}^k \right) \bigg\|_{L^{p}(\Omega)},
\end{aligned}
\end{equation}
where $p^{*} = p/(p-1)$ is the H\"older conjugate of $p$ and $C_{\lambda}$ is a normalization constant.
We initialize with $\lambda_{-1,p} = 0$ and set $\lambda_{0,p} = 1$ by choosing $C_{\lambda}$ appropriately.
Since $\mathcal{S}:(L^\infty(\Omega))'\mapsto L^\infty(\Omega)\hookrightarrow (L^\infty(\Omega))'$ is bounded, $\widetilde{\mathcal{R}}^{k}$ is well-defined.

The damping factor $({1+\lambda_{k-1,p}})^{-1}$ is to adapt to the accuracy of $\widetilde{\mathcal{R}}^{k-1}$ for the auxiliary problem.
At the initial phase of the iteration, the error $\|\widetilde{\mathcal{R}}^{k-1}\hat{\zeta}^{k} - \hat{\eta}^{k}\|_{L^{p}(\Omega)}$ is large, which results in a large value of $\lambda_{k-1,p}$ and consequently a reduced damping factor.
When the error diminishes, i.e., $\|\widetilde{\mathcal{R}}^{k-1}\hat{\zeta}^{k} - \hat{\eta}^{k}\|_{L^{p}(\Omega)} \to 0^{+}$, the parameter $\lambda_{k-1,p}$ decreases, causing $(1+\lambda_{k-1,p})^{-1} \to 1^{-}$ and thereby preserving high-fidelity corrections.
Upon reaching convergence, $\lambda_{k-1,p}$ increases and stabilizes again due to the decay of the dual pairing $\langle \hat{\zeta}^{k}, \hat{\eta}^{k} \rangle$.
This behavior yields a distinct U-shaped trajectory: $\lambda_{k-1,p}$ begins high, reduces during the intermediate refinement stage, and finally increases and stabilizes.
The stabilization of $\lambda_{k-1,p}$ following this U-turn provides a practical indicator for convergence.
This U-shaped evolution is numerically verified in Fig. \ref{fig0}.

In the correction stage, a natural choice of the auxiliary index function is
\begin{equation}\label{eqn30}
\tilde{\eta}^{k+1}(x) = \begin{cases}
\max\left\{ b, \left( \widetilde{\mathcal{R}}^{k}\hat{\zeta}^{k+1} \right)(x) \right\}, & \text{if } u^{k+1}(x) = b, \\
u^{k+1}(x), & \text{if } u^{k+1}(x) \in (a,b), \\
\min\left\{ a, \left( \widetilde{\mathcal{R}}^{k}\hat{\zeta}^{k+1} \right)(x) \right\}, & \text{if } u^{k+1}(x) = a.
\end{cases}
\end{equation}
This construction ensures the exact reconstruction $\mathcal{P}_{[a,b]}(\tilde{\eta}^{k+1}) = u^{k+1}$ while simultaneously maintaining proximity to the value $\widetilde{\mathcal{R}}^{k}\hat{\zeta}^{k+1}$.
Consequently, enforcing $\mathcal{R}^{k+1}\hat{\zeta}^{k+1} = \tilde{\eta}^{k+1}$ necessitates the small adjustment to $\widetilde{\mathcal{R}}^{k}$, and limits the perturbation to the resolver between iterations.
Furthermore, it yields a reduced damping parameter $\lambda_{k,p}$ in the subsequent step, due to the small bias $\|\tilde{\eta}^{k+1}-\widetilde{\mathcal{R}}^{k}\hat{\zeta}^{k+1}\|_{L^{p}(\Omega)}$.

However, the stability of the low-rank update requires the strict positivity of the dual product $\langle \hat{\zeta}^{k+1}, \hat{\eta}^{k+1} \rangle$, but $\tilde{\eta}^{k+1}$ does not guarantee this property.
To enforce positivity, we introduce a final auxiliary index function $\hat{\eta}^{k+1}$ defined by the convex combination
\begin{equation}\label{eqn24}
\hat{\eta}^{k+1}(x) = \upsilon_{k+1}\tilde{\eta}^{k+1}(x) + (1-\upsilon_{k+1})u^{k+1}(x),
\end{equation}
where the parameter $\upsilon_{k+1} \in [0,1]$ is given by
\begin{equation}\label{eqn25}
\upsilon_{k+1} =\begin{cases}
\frac{\langle \hat{\zeta}^{k+1}, u^{k+1} \rangle}{2\left( \langle \hat{\zeta}^{k+1}, u^{k+1} \rangle - \langle \hat{\zeta}^{k+1}, \widetilde{\mathcal{R}}^{k}\hat{\zeta}^{k+1} \rangle \right)},&\text{ if }\langle\hat{\zeta}^{k+1},u^{k+1}\rangle>\langle \hat{\zeta}^{k+1},\widetilde{\mathcal{R}}^{k}\hat{\zeta}^{k+1}\rangle>\langle \hat{\zeta}^{k+1},\tilde{\eta}^{k+1}\rangle,\\
1,&\text{ else}.
\end{cases}
\end{equation}
As proven in Lemma \ref{lemma3}, this construction ensures $\langle \hat{\zeta}^{k+1}, \hat{\eta}^{k+1} \rangle > 0$.
We note that in practice, the condition for the first case in \eqref{eqn25} is seldom encountered, and $\upsilon_{k+1} = 1$ (i.e., $\hat{\eta}^{k+1} = \tilde{\eta}^{k+1}$) almost always suffices.
This mechanism thus serves primarily as a theoretical safeguard.

\begin{lemma}\label{lemma3}
The duality pairing $\langle \hat{\zeta}^{k+1}, \hat{\eta}^{k+1} \rangle$ is strictly positive if $\hat{\eta}^{k+1}$ is defined by \eqref{eqn24}.
\end{lemma}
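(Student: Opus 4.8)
The plan is to exploit the bilinearity of the duality pairing together with the Gramian structure of $\hat{\zeta}^{k+1}$, and then to split according to the two branches defining $\upsilon_{k+1}$ in \eqref{eqn25}. By \eqref{eqn24} and linearity,
\[
\langle \hat{\zeta}^{k+1}, \hat{\eta}^{k+1} \rangle = \upsilon_{k+1}\, \langle \hat{\zeta}^{k+1}, \tilde{\eta}^{k+1} \rangle + (1 - \upsilon_{k+1})\, \langle \hat{\zeta}^{k+1}, u^{k+1} \rangle ,
\]
so it suffices to control the two pairings on the right and combine them with the weight $\upsilon_{k+1} \in [0,1]$.

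The first ingredient is that $\langle \hat{\zeta}^{k+1}, u^{k+1} \rangle \ge 0$, with strict inequality unless $\hat{\zeta}^{k+1} = 0$ (the degenerate case in which the iteration has effectively terminated, so there is nothing to prove). Indeed, since $\hat{\zeta}^{k+1} = \mathcal{G}_{D}[u^{k+1}] u^{k+1} = \mathcal{H}_{D}[u^{k+1}]^{*} \mathcal{H}[u^{k+1}] u^{k+1}$, the definition of $\mathcal{H}_{D}[u]^{*}$ gives $\langle \hat{\zeta}^{k+1}, u^{k+1} \rangle = \big\| \Lambda_{\alpha,D}(\mathcal{A}[y(u^{k+1})])\, \mathcal{H}[u^{k+1}] u^{k+1} \big\|_{L^{2}(\Gamma)}^{2} \ge 0$, and strictness follows once one notes the injectivity of $\Lambda_{\alpha,D}(\mathcal{A})$, which is a by-product of Lemma \ref{lemma1}: if the output $p$ of \eqref{eqn12} vanishes then the first equation and coercivity force $w = 0$, and then the second equation forces the input $v = 0$; in particular $\hat\zeta^{k+1}=0$ is equivalent to $\langle\hat\zeta^{k+1},u^{k+1}\rangle=0$. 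The same argument, together with the positive definiteness of $\widetilde{\mathcal{R}}^{k}$ — maintained inductively, since the DFP/BFG corrections preserve positive definiteness precisely under the curvature condition being proved here — also gives $\langle \hat{\zeta}^{k+1}, \widetilde{\mathcal{R}}^{k} \hat{\zeta}^{k+1} \rangle > 0$.

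Next I would dispatch the two branches of \eqref{eqn25}. In the ``else'' branch $\upsilon_{k+1} = 1$ and $\hat{\eta}^{k+1} = \tilde{\eta}^{k+1}$, so one needs $\langle \hat{\zeta}^{k+1}, \tilde{\eta}^{k+1} \rangle > 0$; this branch is exactly the negation of the chain $\langle \hat{\zeta}^{k+1}, u^{k+1} \rangle > \langle \hat{\zeta}^{k+1}, \widetilde{\mathcal{R}}^{k} \hat{\zeta}^{k+1} \rangle > \langle \hat{\zeta}^{k+1}, \tilde{\eta}^{k+1} \rangle$, and combining that negation with the two positivity facts just established closes this case (e.g.\ if $\langle \hat{\zeta}^{k+1}, \tilde{\eta}^{k+1} \rangle \ge \langle \hat{\zeta}^{k+1}, \widetilde{\mathcal{R}}^{k} \hat{\zeta}^{k+1} \rangle$, positivity is immediate). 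In the other branch, substituting $\upsilon_{k+1} = \langle \hat{\zeta}^{k+1}, u^{k+1} \rangle \big/ \big( 2 ( \langle \hat{\zeta}^{k+1}, u^{k+1} \rangle - \langle \hat{\zeta}^{k+1}, \widetilde{\mathcal{R}}^{k} \hat{\zeta}^{k+1} \rangle ) \big)$ into the identity above and simplifying collapses it to
\[
\langle \hat{\zeta}^{k+1}, \hat{\eta}^{k+1} \rangle = \frac{\langle \hat{\zeta}^{k+1}, u^{k+1} \rangle\,\big( \langle \hat{\zeta}^{k+1}, u^{k+1} \rangle + \langle \hat{\zeta}^{k+1}, \tilde{\eta}^{k+1} \rangle - 2\, \langle \hat{\zeta}^{k+1}, \widetilde{\mathcal{R}}^{k} \hat{\zeta}^{k+1} \rangle \big)}{2\,\big( \langle \hat{\zeta}^{k+1}, u^{k+1} \rangle - \langle \hat{\zeta}^{k+1}, \widetilde{\mathcal{R}}^{k} \hat{\zeta}^{k+1} \rangle \big)} ,
\]
whose denominator is positive by the first inequality of the chain; one then needs the numerator factor positive as well, and checks in passing that $\upsilon_{k+1}\in[0,1]$.

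I expect the main obstacle to be exactly this numerator positivity in the damped branch, i.e.\ $\langle \hat{\zeta}^{k+1}, u^{k+1} \rangle + \langle \hat{\zeta}^{k+1}, \tilde{\eta}^{k+1} \rangle > 2\, \langle \hat{\zeta}^{k+1}, \widetilde{\mathcal{R}}^{k} \hat{\zeta}^{k+1} \rangle$ — equivalently, that the ``drop'' $\langle \hat{\zeta}^{k+1}, \widetilde{\mathcal{R}}^{k} \hat{\zeta}^{k+1} \rangle - \langle \hat{\zeta}^{k+1}, \tilde{\eta}^{k+1} \rangle$ does not exceed the ``gap'' $\langle \hat{\zeta}^{k+1}, u^{k+1} \rangle - \langle \hat{\zeta}^{k+1}, \widetilde{\mathcal{R}}^{k} \hat{\zeta}^{k+1} \rangle$. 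Resolving it forces one to use the fine structure of $\tilde{\eta}^{k+1}$ from \eqref{eqn30}: at each point $x$, $\tilde{\eta}^{k+1}(x)$ is a pointwise selection lying between $u^{k+1}(x)$ and $(\widetilde{\mathcal{R}}^{k}\hat{\zeta}^{k+1})(x)$, and $\tilde\eta^{k+1}$ coincides with $u^{k+1}$ off the contact set $\{u^{k+1}\in\{a,b\}\}$; this ties $\langle \hat{\zeta}^{k+1}, \tilde{\eta}^{k+1} \rangle$ to the other two pairings on the relevant pieces and, combined with the ordering defining the branch, lets the estimate close. A secondary technicality is recording the two standing facts used above — the exclusion of $\hat{\zeta}^{k+1} = 0$ and the inductive positive definiteness of $\widetilde{\mathcal{R}}^{k}$ — as hypotheses carried through the iteration.
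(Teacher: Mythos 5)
Your proposal follows the same route as the paper's proof: a case split according to the ordering of the three pairings $A=\langle \hat{\zeta}^{k+1}, u^{k+1}\rangle$, $B=\langle \hat{\zeta}^{k+1}, \widetilde{\mathcal{R}}^{k}\hat{\zeta}^{k+1}\rangle$, $C=\langle \hat{\zeta}^{k+1}, \tilde{\eta}^{k+1}\rangle$; strict positivity of $A$ from the Gramian structure $\hat{\zeta}^{k+1}=\mathcal{G}_D[u^{k+1}]u^{k+1}$; positivity of $B$ carried inductively; and, in the damped branch, the explicit identity $\langle \hat{\zeta}^{k+1}, \hat{\eta}^{k+1}\rangle = A(A+C-2B)/(2(A-B))$, which correctly isolates $A+C>2B$ as the decisive inequality. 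This is exactly the skeleton of the paper's cases (a)--(c).

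The genuine gap is that you stop at the decisive inequality rather than proving it. In the damped branch you explicitly flag $A+C>2B$, i.e.\ $\langle \hat{\zeta}^{k+1}, \tilde{\eta}^{k+1}\rangle \geq \langle \hat{\zeta}^{k+1}, 2\widetilde{\mathcal{R}}^{k}\hat{\zeta}^{k+1}-u^{k+1}\rangle$, as ``the main obstacle'' and only sketch that the pointwise structure of \eqref{eqn30} (namely that $\tilde{\eta}^{k+1}(x)$ is a selection between $u^{k+1}(x)$ and $(\widetilde{\mathcal{R}}^{k}\hat{\zeta}^{k+1})(x)$, agreeing with $u^{k+1}$ off the contact set) should close it. The paper disposes of this in one line by asserting that ``the projection property yields'' precisely such an inequality (as printed there with $u^k$ and $\widetilde{\mathcal{R}}^{k-1}\hat{\zeta}^{k}$ in the roles your algebra shows must be interchanged), so the step you could not close is the same step the paper treats as immediate; but a reviewer grading your writeup as a standalone proof must count it as incomplete, since without $A+C\geq 2B$ the sign of $\langle\hat{\zeta}^{k+1},\hat{\eta}^{k+1}\rangle$ is not controlled (and indeed $\upsilon_{k+1}\in[0,1]$ also hinges on $A\geq 2B$, which you note only in passing). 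The same remark applies to the ``else'' branch: its sub-case $C<B$ and $A\leq B$ requires $C\geq A$ (the paper's case (b), again attributed to the projection condition), which your parenthetical example does not cover. In short: same decomposition and same key facts as the paper, but the two projection-based inequalities that actually carry the argument are identified rather than established.
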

\begin{proof}
The proof proceeds by induction.
The base case is established by the properties of the initial resolver $\mathcal{R}^0$.
For the inductive step, we assume the result holds for iteration $k$ and prove it for $k+1$.
The construction of $\hat{\eta}^{k+1}$ is designed to satisfy the projection condition $\mathcal{P}{[a,b]}(\hat{\eta}^{k+1}) = u^{k+1}$ while remaining close to $\widetilde{\mathcal{R}}^{k}\hat{\zeta}^{k+1}$.
We analyze all possible orderings of the three scalar values $\langle \hat{\zeta}^{k+1}, \tilde{\eta}^{k+1} \rangle$, $\langle \hat{\zeta}^{k+1}, \widetilde{\mathcal{R}}^{k}\hat{\zeta}^{k+1} \rangle$, and $\langle \hat{\zeta}^{k+1}, u^{k+1} \rangle$.
\begin{itemize}
\item[(a)] $\langle \hat{\zeta}^k, \tilde{\eta}^k \rangle \geq \langle \hat{\zeta}^k, \widetilde{\mathcal{R}}^{k-1} \hat{\zeta}^{k} \rangle$.
For $k > 0$, the inductive hypothesis ensures that $\widetilde{\mathcal{R}}^{k-1}$ preserves non-negativity, from which the desired inequality follows.
For $k = 0$, the non-negativity of $\hat{\zeta}^0$ is guaranteed by the construction of $\mathcal{R}^0$.
\item[(b)]
$\langle \hat{\zeta}^k, \tilde{\eta}^k \rangle < \langle \hat{\zeta}^k, \widetilde{\mathcal{R}}^{k-1} \hat{\zeta}^{k} \rangle$ and $\langle \hat{\zeta}^k, u^k \rangle \leq \langle \hat{\zeta}^k, \widetilde{\mathcal{R}}^{k-1} \hat{\zeta}^{k} \rangle$.
The projection condition implies $\langle \hat{\zeta}^k, \tilde{\eta}^k \rangle \geq \langle \hat{\zeta}^k, u^k \rangle$.
Since $\hat{\zeta}^k = \mathcal{G}[u^k]u^k$ and $\mathcal{G}[u^k]$ is positive definite, the desired result follows.
\item[(c)]
$\langle \hat{\zeta}^k, \tilde{\eta}^k \rangle < \langle \hat{\zeta}^k, \widetilde{\mathcal{R}}^{k-1} \hat{\zeta}^{k} \rangle < \langle \hat{\zeta}^k, u^k \rangle$.  
The projection property yields $\langle \hat{\zeta}^k, \tilde{\eta}^k \rangle \geq \langle \hat{\zeta}^k, 2u^k - \widetilde{\mathcal{R}}^{k-1} \hat{\zeta}^{k} \rangle$.
Combining this with the definition of $\upsilon_{k+1}$ gives the positivity.
\end{itemize}
These three cases exhaust all possibilities and together establish the lemma.
\end{proof}

Once $\hat{\eta}^{k+1}$ is obtained, the resolver $\mathcal{R}^{k+1}$ is updated via a low rank correction of the form $\mathcal{R}^{k+1} = \widetilde{\mathcal{R}}^{k} + \delta \mathcal{R}^{k}$, where $\delta \mathcal{R}^{k}$ for the DFP and BFG schemes are respectively given by
\begin{align}
\delta\mathcal{R}^{k}_{\mathrm{DFP}} =& \frac{\hat{\eta}^{k+1} \otimes \hat{\eta}^{k+1}}{\langle \hat{\zeta}^{k+1}, \hat{\eta}^{k+1} \rangle} - \frac{\widetilde{\mathcal{R}}^{k} \hat{\zeta}^{k+1} \otimes \widetilde{\mathcal{R}}^{k} \hat{\zeta}^{k+1}}{\langle \hat{\zeta}^{k+1}, \widetilde{\mathcal{R}}^{k} \hat{\zeta}^{k+1} \rangle}, \label{eqn:DFP} \\
\delta\mathcal{R}^{k}_{\mathrm{BFG}} =& \frac{(\hat{\eta}^{k+1} - \widetilde{\mathcal{R}}^{k} \hat{\zeta}^{k+1}) \otimes \hat{\eta}^{k+1} + \hat{\eta}^{k+1} \otimes (\hat{\eta}^{k+1} - \widetilde{\mathcal{R}}^{k} \hat{\zeta}^{k+1})}{\langle \hat{\zeta}^{k+1}, \hat{\eta}^{k+1} \rangle} \nonumber \\
{}& - \frac{\langle \hat{\eta}^{k+1} - \widetilde{\mathcal{R}}^{k} \hat{\zeta}^{k+1}, \hat{\zeta}^{k+1} \rangle}{\langle \hat{\zeta}^{k+1}, \hat{\eta}^{k+1} \rangle^{2}} \hat{\eta}^{k+1} \otimes \hat{\eta}^{k+1}.\label{eqn:BFG}
\end{align}

This update strategy ensures the uniform spectral boundedness of $\widetilde{\mathcal{R}}^{k}$:
\begin{theorem}\label{thm1}
The following inequality holds for all $k \in \mathbb{N}$ and $\zeta \in (L^{\infty}(\Omega))'$:
\begin{equation}\label{eqn19}
\langle \zeta, \widetilde{\mathcal{R}}^{k}\zeta\rangle \leq (h^{-1}\|D\|_{L^\infty(\Omega)}+(C_{\lambda}h^{2})^{-1} )\|\zeta\|_{(L^\infty(\Omega))'}^2.
\end{equation}
\end{theorem}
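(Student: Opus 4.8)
The plan is to write $\widetilde{\mathcal{R}}^k=\mathcal{R}^0+B_k$ with $B_k:=\widetilde{\mathcal{R}}^k-\mathcal{R}^0$, bound the quadratic form $\langle\zeta,\mathcal{R}^0\zeta\rangle$ directly, and bound $\langle\zeta,B_k\zeta\rangle$ by induction on $k$. The key point is that the damping parameters $\lambda_{k-1,p}$ in \eqref{eqn33} are designed precisely so that the factor $(1+\lambda_{k-1,p})^{-1}$ in \eqref{eqn26} absorbs exactly the extra quadratic form contributed by the stabilized low-rank correction $\mathcal{S}\,\delta\mathcal{R}^{k-1}\,\mathcal{S}$.

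First I would upgrade the operator estimates \eqref{eqn20} from $L^1(\Omega)$ to all of $(L^\infty(\Omega))'$. Writing $(\mathcal{R}^0\zeta)(x)=\tfrac{D(x)^{1/2}}{|Q_x|^{1/2}}\langle\zeta,\psi_x\rangle$ with $\psi_x:=\tfrac{D^{1/2}}{|Q_\cdot|^{1/2}}\chi_{Q_x}\in L^\infty(\Omega)$ and noting $\|\psi_x\|_{L^\infty(\Omega)}\le h^{-1/2}\|D\|_{L^\infty(\Omega)}^{1/2}$ and $\tfrac{D(x)^{1/2}}{|Q_x|^{1/2}}\le h^{-1/2}\|D\|_{L^\infty(\Omega)}^{1/2}$, one gets $\|\mathcal{R}^0\zeta\|_{L^\infty(\Omega)}\le h^{-1}\|D\|_{L^\infty(\Omega)}\|\zeta\|_{(L^\infty(\Omega))'}$ and, analogously, $\|\mathcal{S}\zeta\|_{L^\infty(\Omega)}\le h^{-1}\|\zeta\|_{(L^\infty(\Omega))'}$ for every $\zeta\in(L^\infty(\Omega))'$. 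In particular $\mathcal{R}^0$, $\mathcal{S}$, the rank-one corrections, and hence $\widetilde{\mathcal{R}}^k$, all map $(L^\infty(\Omega))'$ into $L^\infty(\Omega)$, so $\langle\zeta,\widetilde{\mathcal{R}}^k\zeta\rangle$ is well defined; and by the dual-norm identity, $\langle\zeta,\mathcal{R}^0\zeta\rangle\le\|\zeta\|_{(L^\infty(\Omega))'}\|\mathcal{R}^0\zeta\|_{L^\infty(\Omega)}\le h^{-1}\|D\|_{L^\infty(\Omega)}\|\zeta\|_{(L^\infty(\Omega))'}^2$, which is the first term in \eqref{eqn19}.

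Next I would recast \eqref{eqn26}. Since $\mathcal{R}^k-\widetilde{\mathcal{R}}^{k-1}=\delta\mathcal{R}^{k-1}$, subtracting $\mathcal{R}^0$ gives $B_k=(1+\lambda_{k-1,p})^{-1}\big(B_{k-1}+\mathcal{S}\,\delta\mathcal{R}^{k-1}\,\mathcal{S}\big)$ with $B_0=0$, and I claim $\langle\zeta,B_k\zeta\rangle\le(C_\lambda h^2)^{-1}\|\zeta\|_{(L^\infty(\Omega))'}^2$ for all $k$; combined with the previous paragraph this yields \eqref{eqn19}. The base case is trivial. For the inductive step, using that $\mathcal{S}$ is self-adjoint for the duality pairing (its averaging kernel being symmetric), so that $\langle\zeta,\mathcal{S}\,\delta\mathcal{R}^{k-1}\,\mathcal{S}\,\zeta\rangle=\langle w,\delta\mathcal{R}^{k-1}w\rangle$ with $w:=\mathcal{S}\zeta\in L^\infty(\Omega)$, $\|w\|_{L^\infty(\Omega)}\le h^{-1}\|\zeta\|_{(L^\infty(\Omega))'}$ and hence $\|w\|_{L^{p^*}(\Omega)}\le|\Omega|^{1/p^*}h^{-1}\|\zeta\|_{(L^\infty(\Omega))'}$, the inductive hypothesis gives
\[
\langle\zeta,B_k\zeta\rangle=\frac{\langle\zeta,B_{k-1}\zeta\rangle+\langle w,\delta\mathcal{R}^{k-1}w\rangle}{1+\lambda_{k-1,p}}\le\frac{1}{1+\lambda_{k-1,p}}\Big(\frac{\|\zeta\|_{(L^\infty(\Omega))'}^2}{C_\lambda h^2}+\langle w,\delta\mathcal{R}^{k-1}w\rangle\Big).
\]
Since $\lambda_{k-1,p}\ge0$, it now suffices to prove $\langle w,\delta\mathcal{R}^{k-1}w\rangle\le\lambda_{k-1,p}(C_\lambda h^2)^{-1}\|\zeta\|_{(L^\infty(\Omega))'}^2$, for then the bracket is at most $(1+\lambda_{k-1,p})(C_\lambda h^2)^{-1}\|\zeta\|^2$ and the prefactor closes the induction.

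This last inequality is the heart of the matter, and is exactly what the formulas \eqref{eqn33} were reverse-engineered for. For the DFP update, set $a:=\hat\eta^k/\langle\hat\zeta^k,\hat\eta^k\rangle^{1/2}$ and $b:=\widetilde{\mathcal{R}}^{k-1}\hat\zeta^k/\langle\hat\zeta^k,\widetilde{\mathcal{R}}^{k-1}\hat\zeta^k\rangle^{1/2}$, well defined since $\langle\hat\zeta^k,\hat\eta^k\rangle>0$ by Lemma \ref{lemma3} and the curvature $\langle\hat\zeta^k,\widetilde{\mathcal{R}}^{k-1}\hat\zeta^k\rangle$ is kept positive by the scheme; then $\delta\mathcal{R}^{k-1}_{\mathrm{DFP}}=a\otimes a-b\otimes b$, so $\langle w,\delta\mathcal{R}^{k-1}_{\mathrm{DFP}}w\rangle=\langle w,a\rangle^2-\langle w,b\rangle^2=\langle w,a+b\rangle\langle w,a-b\rangle$, and by Hölder's inequality together with $\|w\|_{L^{p^*}(\Omega)}^2\le|\Omega|^{2/p^*}h^{-2}\|\zeta\|^2$,
\[
\langle w,\delta\mathcal{R}^{k-1}_{\mathrm{DFP}}w\rangle\le|\Omega|^{2/p^*}h^{-2}\|\zeta\|_{(L^\infty(\Omega))'}^2\,\|a+b\|_{L^p(\Omega)}\|a-b\|_{L^p(\Omega)}=\frac{\lambda_{k-1,p}^{\mathrm{DFP}}}{C_\lambda h^2}\|\zeta\|_{(L^\infty(\Omega))'}^2,
\]
the last equality being the definition \eqref{eqn33}. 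The BFG case is analogous: from \eqref{eqn:BFG} one computes $\langle w,\delta\mathcal{R}^{k-1}_{\mathrm{BFG}}w\rangle=\tfrac{\langle w,\hat\eta^k\rangle}{\langle\hat\zeta^k,\hat\eta^k\rangle}\big\langle w,\big(2-\tfrac{\hat\eta^k\otimes\hat\zeta^k}{\langle\hat\zeta^k,\hat\eta^k\rangle}\big)(\hat\eta^k-\widetilde{\mathcal{R}}^{k-1}\hat\zeta^k)\big\rangle$, and estimating each factor by Hölder and $\|w\|_{L^{p^*}(\Omega)}$ reproduces $\lambda_{k-1,p}^{\mathrm{BFG}}(C_\lambda h^2)^{-1}\|\zeta\|^2$. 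I expect the genuinely delicate parts to be the bookkeeping — arranging the two factors of $h^{-1}$ coming from $\mathcal{S}$, the embedding constant $|\Omega|^{2/p^*}$, and the normalization $C_\lambda$ so that they cancel precisely against the quantities planted in \eqref{eqn33} — and confirming that the quasi-Newton denominators never vanish (via Lemma \ref{lemma3} and the positivity maintained by the scheme), so that the induction cannot break down.
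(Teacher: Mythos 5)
Your proposal is correct and follows essentially the same route as the paper's proof: the same induction on the claim $\langle\zeta,(\widetilde{\mathcal{R}}^{k}-\mathcal{R}^{0})\zeta\rangle\le (C_{\lambda}h^{2})^{-1}\|\zeta\|_{(L^{\infty}(\Omega))'}^{2}$, the same H\"older estimate of $\langle \mathcal{S}\zeta,\delta\mathcal{R}^{k-1}\mathcal{S}\zeta\rangle$ against the $L^{p}$ quantities built into $\lambda_{k-1,p}$, and the same difference-of-squares factoring of the DFP correction. Your explicit upgrade of the $\mathcal{R}^{0}$ and $\mathcal{S}$ bounds from $L^{1}(\Omega)$ to all of $(L^{\infty}(\Omega))'$ is a detail the paper leaves implicit via the duality identity, not a different argument.
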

\begin{proof}
We establish the inequality
\begin{equation}\label{eqn22}
\langle \xi, (\widetilde{\mathcal{R}}^{k}-\mathcal{R}^{0})\xi\rangle \leq (C_{\lambda}h^{2})^{-1}\|\xi\|_{(L^{\infty}(\Omega))'}^{2},
\end{equation}
by mathematical induction. The theorem then follows directly from the assumption \eqref{eqn20}.
For the case $k=0$, \eqref{eqn22} holds trivially.
Now assume that the assertion holds for some $k$, and we prove the stability for $\widetilde{\mathcal{R}}^{k+1}$.
For the DFP correction, we deduce that for any $\xi \in (L^{\infty}(\Omega))'$:
\begin{align*}
{}&\langle \xi, \mathcal{S}(\mathcal{R}^{k+1} - \widetilde{\mathcal{R}}^{k})\mathcal{S}\xi \rangle \\
=& \frac{\langle \mathcal{S}\xi, \hat{\eta}^{k+1} \rangle^{2}}{\langle \hat{\zeta}^{k+1}, \hat{\eta}^{k+1} \rangle} - \frac{\langle \mathcal{S}\xi, \widetilde{\mathcal{R}}^{k}\hat{\zeta}^{k+1} \rangle^{2}}{\langle \hat{\zeta}^{k+1}, \widetilde{\mathcal{R}}^{k}\hat{\zeta}^{k+1} \rangle}
= \left\langle \mathcal{S}\xi, \frac{\hat{\eta}^{k+1}}{\sqrt{\langle \hat{\zeta}^{k+1}, \hat{\eta}^{k+1} \rangle}} \right\rangle^{2} - \left\langle \mathcal{S}\xi, \frac{\widetilde{\mathcal{R}}^{k}\hat{\zeta}^{k+1}}{\sqrt{\langle \hat{\zeta}^{k+1}, \widetilde{\mathcal{R}}^{k}\hat{\zeta}^{k+1} \rangle}} \right\rangle^{2} \\
=& \left\langle \mathcal{S}\xi, \frac{\hat{\eta}^{k+1}}{\sqrt{\langle \hat{\zeta}^{k+1}, \hat{\eta}^{k+1} \rangle}} - \frac{\widetilde{\mathcal{R}}^{k}\hat{\zeta}^{k+1}}{\sqrt{\langle \hat{\zeta}^{k+1}, \widetilde{\mathcal{R}}^{k}\hat{\zeta}^{k+1} \rangle}} \right\rangle
{} \times \left\langle \mathcal{S}\xi, \frac{\hat{\eta}^{k+1}}{\sqrt{\langle \hat{\zeta}^{k+1}, \hat{\eta}^{k+1} \rangle}} + \frac{\widetilde{\mathcal{R}}^{k}\hat{\zeta}^{k+1}}{\sqrt{\langle \hat{\zeta}^{k+1}, \widetilde{\mathcal{R}}^{k}\hat{\zeta}^{k+1} \rangle}} \right\rangle 
\end{align*}
Upon letting
$f_{1}= \frac{\hat{\eta}^{k+1}}{\sqrt{\langle \hat{\zeta}^{k+1}, \hat{\eta}^{k+1} \rangle}} - \frac{\widetilde{\mathcal{R}}^{k}\hat{\zeta}^{k+1}}{\sqrt{\langle \hat{\zeta}^{k+1} \widetilde{\mathcal{R}}^{k}\hat{\zeta}^{k+1} \rangle}}$ and $
f_{2}=\frac{\hat{\eta}^{k+1}}{\sqrt{\langle \hat{\zeta}^{k+1}, \hat{\eta}^{k+1} \rangle}} + \frac{\widetilde{\mathcal{R}}^{k}\hat{\zeta}^{k+1}}{\sqrt{\langle \hat{\zeta}^{k+1}, \widetilde{\mathcal{R}}^{k}\hat{\zeta}^{k+1} \rangle}}$,
we deduce 
\begin{align*}
\langle \xi, \mathcal{S}(\mathcal{R}^{k+1} - \widetilde{\mathcal{R}}^{k})\mathcal{S}\xi \rangle=&\left\langle \mathcal{S}\xi,f_{1} \right\rangle\times \left\langle \mathcal{S}\xi, f_{2}\right\rangle\leq \|\mathcal{S}\xi\|_{L^{p^{*}}(\Omega)}^{2} \left\|f_{1} \right\|_{L^{p}(\Omega)}
\left\|f_{2} \right\|_{L^{p}(\Omega)} \\
=& \frac{\lambda_{k,p}}{C_{\lambda}|\Omega|^{2/p^{*}}}\|\mathcal{S}\xi\|_{L^{p^{*}}(\Omega)}^{2} \leq \frac{\lambda_{k,p}}{C_{\lambda}}\|\mathcal{S}\xi\|_{L^{\infty}(\Omega)}^{2} \leq\frac{\lambda_{k,p}}{C_{\lambda}h^{2}}\|\xi\|_{(L^{\infty}(\Omega))'}^{2},
\end{align*}
For the BFG correction, similarly, we get
\begin{align*}
\langle \xi, \mathcal{S}(\mathcal{R}^{k+1} - \widetilde{\mathcal{R}}^{k})\mathcal{S}\xi \rangle
=& 2 \frac{\langle \mathcal{S}\xi, \hat{\eta}^{k+1} \rangle \langle \xi, \hat{\eta}^{k+1} - \widetilde{\mathcal{R}}^{k}\hat{\zeta}^{k+1} \rangle}{\langle \hat{\zeta}^{k+1}, \hat{\eta}^{k+1} \rangle} - \frac{\langle \mathcal{S}\xi, \hat{\eta}^{k+1} \rangle^{2} \langle \hat{\zeta}^{k+1}, \hat{\eta}^{k+1} - \widetilde{\mathcal{R}}^{k}\hat{\zeta}^{k+1} \rangle}{\langle \hat{\zeta}^{k+1}, \hat{\eta}^{k+1} \rangle^{2}} \\
=& \frac{\langle \mathcal{S}\xi, \hat{\eta}^{k+1} \rangle}{\langle \hat{\zeta}^{k+1}, \hat{\eta}^{k+1} \rangle} \left\langle \mathcal{S}\xi, \left(2 - \frac{\hat{\eta}^{k+1} \otimes \hat{\zeta}^{k+1}}{\langle \hat{\zeta}^{k+1}, \hat{\eta}^{k+1} \rangle}\right)\left(\hat{\eta}^{k+1} - \widetilde{\mathcal{R}}^{k}\hat{\zeta}^{k+1}\right) \right\rangle \\
\leq& \|\mathcal{S}\xi\|_{L^{p^{*}}(\Omega)}^{2} \frac{\|\hat{\eta}^{k+1}\|_{L^{p}(\Omega)}}{\langle \hat{\zeta}^{k+1}, \hat{\eta}^{k+1} \rangle} \left\| \left(2 - \frac{\hat{\eta}^{k+1} \otimes \hat{\zeta}^{k+1}}{\langle \hat{\zeta}^{k+1}, \hat{\eta}^{k+1} \rangle}\right)\left(\hat{\eta}^{k+1} - \widetilde{\mathcal{R}}^{k}\hat{\zeta}^{k+1}\right) \right\|_{L^{p}(\Omega)} \\
=& \frac{\lambda_{k,p}}{C_{\lambda}|\Omega|^{2/p^{*}}}\|\mathcal{S}\xi\|_{L^{p^{*}}(\Omega)}^{2} \leq \frac{\lambda_{k,p}}{C_{\lambda}}\|\mathcal{S}\xi\|_{L^{\infty}(\Omega)}^{2} \leq\frac{\lambda_{k,p}}{C_{\lambda}h^{2}}\|\xi\|_{(L^{\infty}(\Omega))'}^{2}.
\end{align*}
Combining these result gives 
$$\langle \xi, \mathcal{S}(\mathcal{R}^{k+1} - \widetilde{\mathcal{R}}^{k})\mathcal{S}\xi \rangle \leq \lambda_{k,p}\left(C_{\lambda}h^{2}\right)^{-1}\|\xi\|_{(L^{\infty}(\Omega))'}^{2},$$ 
which implies
\begin{equation*}
\begin{aligned}
&\langle \xi, (\widetilde{\mathcal{R}}^{k+1}-\mathcal{R}^{0})\xi \rangle = \left\langle \xi,(1 + \lambda_{k,p})^{-1}\left(\widetilde{\mathcal{R}}^{k}-\mathcal{R}^{0}+\mathcal{S}\left(\mathcal{R}^{k+1}-\widetilde{\mathcal{R}}^{k}\right)\mathcal{S}\right) \xi \right\rangle\\
=& \frac{\langle \xi, (\widetilde{\mathcal{R}}^{k}-\mathcal{R}^{0} ) \xi \rangle+\langle \xi, \mathcal{S}(\mathcal{R}^{k+1}- \widetilde{\mathcal{R}}^{k})\mathcal{S} \xi \rangle}{1 + \lambda_{k,p}}
\leq \left(C_{\lambda}h^{2}\right)^{-1}\|\xi\|_{(L^{\infty}(\Omega))'}^{2}.
\end{aligned}
\end{equation*}
This completes the inductive step and proves that \eqref{eqn22} holds for all iterations $k$.
\end{proof}

Theorem \ref{thm1} gives the stability for the IDSM, i.e., the {uniform} spectral boundedness of the stabilized operator $\widetilde{\mathcal{R}}^{k}$.
In $\mathcal{K}^k = \mathcal{R}^k \mathcal{H}_D[u^k]^*$, the adjoint $\mathcal{H}_D[u^k]^*$ can be obtained via two well-posed elliptic problems, cf.  Section \ref{sec:IMP}.
The stability of the IDSM relies crucially on the boundedness of $\mathcal{R}^k$.
The boundedness of $\widetilde{\mathcal{R}}^k$ ensures uniform stability of $\mathcal{R}^{k}$.
In contrast, $\mathcal{R}^k$ is not necessarily so, since  $\mathcal{R}$ approximates the unbounded inverse of the compact Gramian $\mathcal{G}_D[u_*]$.
The stabilization-correction scheme balances the stability and fidelity: the stabilization step enforces uniform boundedness of $\widetilde{\mathcal{R}}^k$, and the correction step refines $\mathcal{R}^k$ to improve the accuracy.

In the proof of Theorem \ref{thm1}, the spectral boundedness of $\widetilde{\mathcal{R}}^k$ is established in the $L^{\infty}(\Omega)$ norm via the inequality $\frac{1}{|\Omega|^{2/p^{*}}}\|\mathcal{S}\xi\|_{L^{p^{*}}(\Omega)}^{2} \leq \|\mathcal{S}\xi\|_{L^{\infty}(\Omega)}^{2}$, which holds for any $p \in [1, \infty)$.
The analysis is valid for any $L^p$ spaces.
The term $\frac{1}{|\Omega|^{2/p^{*}}}\|\mathcal{S}\xi\|_{L^{p^{*}}(\Omega)}^{2}$ is monotone increasing with respect to $p^{*}$ (and hence decreasing with respect to $p$), and a larger $p$ value yields a tighter upper bound.
(i.e., a smaller spectral bound for $\widetilde{\mathcal{R}}^k$), and the damping factor $\frac{1}{1+\lambda_{k,p}}$ in \eqref{eqn26}.
This behavior reflects the role of the damping factor in balancing stability and fidelity: a larger $p$ leads to stronger damping (larger $\lambda_{p,k}$), and suppresses low-rank corrections more aggressively to preserve stability, while a smaller $p$ imposes weaker damping (smaller $\lambda_{p,k}$), retaining more correction terms for accuracy.
Nevertheless, the reconstructions is insensitive to the choice of $p$.

\section{Algorithmic implementation}\label{sec:IMP}
This section details the numerical implementation of the proposed IDSM.
We begin by deriving a computable expression for the adjoint operator $\mathcal{H}_D[u^k]^*$ via two elliptic problems in Lemma \ref{lemma2}.
We then describe the practical realization of the stabilization-correction scheme.
Finally, we consolidate these components into a complete algorithmic description in Algorithm \ref{alg1}.

\begin{lemma}\label{lemma2}
For any $v\in L^{2}(\Gamma)$, let $(w_{1}, p_{1}) \in H^{1}(\Omega) \times L^{2}(\Gamma)$ solve
\begin{equation}\label{eqn29}
\left\{
\begin{aligned}
\langle \mathcal{A}[y(u^{k})] w_{1}, z \rangle_{(H^{1}(\Omega))', H^{1}(\Omega)} - (p_{1}, \mathcal{T} z)_{L^{2}(\Gamma)} &= 0, \quad &&\forall z \in H^{1}(\Omega), \\
(\mathcal{T} w_{1}, q)_{L^{2}(\Gamma)} + (\alpha_{D} p_{1}, q)_{L^{2}(\Gamma)} &= (v, q)_{L^{2}(\Gamma)}, \quad &&\forall q \in L^{2}(\Gamma),
\end{aligned}
\right.
\end{equation}
and let $(w_{2}, p_{2}) \in H^{1}(\Omega) \times L^{2}(\Gamma)$ solve
\begin{equation}\label{eqn27}
\left\{
\begin{aligned}
\langle \mathcal{A}[y(u^{k})]^{*} w_{2}, z \rangle_{(H^{1}(\Omega))', H^{1}(\Omega)} + (p_{2}, \mathcal{T} z)_{L^{2}(\Gamma)} &= 0, \quad &&\forall z \in H^{1}(\Omega), \\
-(\mathcal{T} w_{2}, q)_{L^{2}(\Gamma)} + (\alpha_{D} p_{2}, q)_{L^{2}(\Gamma)} &= (p_{1}, q)_{L^{2}(\Gamma)}, \quad &&\forall q \in L^{2}(\Gamma).
\end{aligned}
\right.
\end{equation}
Then we have $\mathcal{H}_{D}[u^{k}]^{*} v=-\mathcal{B}_{\tau}[y(u^{k})]^{*} w_{2}$.
\end{lemma}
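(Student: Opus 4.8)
The plan is to establish the identity $\mathcal{H}_{D}[u^{k}]^{*} v = -\mathcal{B}_{\tau}[y(u^{k})]^{*} w_{2}$ in $(L^{\infty}(\Omega))'$ by testing both sides against an arbitrary $\phi\in L^{\infty}(\Omega)$. First I would unwind the left-hand side: by the definition of $\mathcal{H}_{D}[u^{k}]^{*}$ and the observation that \eqref{eqn29} exactly realizes $p_{1}=\Lambda_{\alpha,D}(\mathcal{A}[y(u^{k})])v$, one gets $\langle \mathcal{H}_{D}[u^{k}]^{*} v,\phi\rangle = (p_{1},p_{\phi})_{L^{2}(\Gamma)}$, where $(w_{\phi},p_{\phi})\in H^{1}(\Omega)\times L^{2}(\Gamma)$ solves the defining system \eqref{eqn12} of $\Lambda_{\alpha,D}(\mathcal{A}[y(u^{k})])$ with right-hand side $\mathcal{H}[u^{k}]\phi$. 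Introduce $\psi := \mathcal{A}[y(u^{k})]^{-1}\mathcal{B}_{\tau}[y(u^{k})]\phi\in H^{1}(\Omega)$, which is well defined since $\mathcal{B}_{\tau}[y(u^{k})]\phi\in(H^{1}(\Omega))'$ and $\mathcal{A}[y(u^{k})]$ is an isomorphism under the coercivity/continuity bounds of Lemma \ref{lemma1}; then $\mathcal{H}[u^{k}]\phi = \mathcal{T}\psi$. For the right-hand side, the definition of $\mathcal{B}_{\tau}[y]^{*}$ together with $\mathcal{A}[y(u^{k})]\psi=\mathcal{B}_{\tau}[y(u^{k})]\phi$ gives $\langle \mathcal{B}_{\tau}[y(u^{k})]^{*} w_{2},\phi\rangle = \langle \mathcal{A}[y(u^{k})]\psi,w_{2}\rangle_{(H^{1}(\Omega))',H^{1}(\Omega)}$. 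One should also note that \eqref{eqn27} is well posed: the substitution $w_{2}\mapsto -w_{2}$ recasts it as a system of the form \eqref{eqn12} with $\mathcal{A}[y(u^{k})]$ replaced by its adjoint $\mathcal{A}[y(u^{k})]^{*}$, which inherits the coercivity and continuity constants of $\mathcal{A}[y(u^{k})]$ (since $\langle \mathcal{A}^{*}w,w\rangle=\langle \mathcal{A}w,w\rangle$), so Lemma \ref{lemma1} again applies.

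The heart of the argument is a short chain of manipulations of the four weak formulations with well-chosen test functions. Testing the first equation of \eqref{eqn27} with $z=\psi$ and using $\langle \mathcal{A}[y(u^{k})]^{*}w_{2},\psi\rangle=\langle \mathcal{A}[y(u^{k})]\psi,w_{2}\rangle$ yields $\langle \mathcal{A}[y(u^{k})]\psi,w_{2}\rangle = -(p_{2},\mathcal{T}\psi)_{L^{2}(\Gamma)}$, so the claim reduces to $(p_{1},p_{\phi})_{L^{2}(\Gamma)} = (p_{2},\mathcal{T}\psi)_{L^{2}(\Gamma)}$. Testing the second equation of the $(w_{\phi},p_{\phi})$-system with $q=p_{2}$ gives $(p_{2},\mathcal{T}\psi)_{L^{2}(\Gamma)} = (\mathcal{T}w_{\phi},p_{2})_{L^{2}(\Gamma)} + (\alpha_{D}p_{\phi},p_{2})_{L^{2}(\Gamma)}$. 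Next, testing the first equation of \eqref{eqn27} with $z=w_{\phi}$ and the first equation of the $(w_{\phi},p_{\phi})$-system with $z=w_{2}$, then comparing, produces the skew-symmetry relation $(\mathcal{T}w_{\phi},p_{2})_{L^{2}(\Gamma)} = -(\mathcal{T}w_{2},p_{\phi})_{L^{2}(\Gamma)}$. Substituting this, and using that $\alpha_{D}$ acts by multiplication and is hence self-adjoint on $L^{2}(\Gamma)$, yields $(p_{2},\mathcal{T}\psi)_{L^{2}(\Gamma)} = (-\mathcal{T}w_{2}+\alpha_{D}p_{2},p_{\phi})_{L^{2}(\Gamma)}$. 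Finally, testing the second equation of \eqref{eqn27} with $q=p_{\phi}$ identifies the right-hand side here with $(p_{1},p_{\phi})_{L^{2}(\Gamma)}$, which closes the chain; since $\phi$ was arbitrary, the lemma follows.

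The computations are routine once the test functions are fixed; I expect the only real difficulty to be bookkeeping, namely keeping the sign conventions of the two coupled saddle-point systems \eqref{eqn29} and \eqref{eqn27} (and of the defining system \eqref{eqn12}) mutually consistent, and correctly invoking the duality relation $\langle \mathcal{A}^{*}w,z\rangle = \langle \mathcal{A}z,w\rangle$ so that the adjoint operator in \eqref{eqn27} pairs correctly against the forward solve hidden in $\psi$. The one genuinely separate point to check is the well-posedness of \eqref{eqn27}, which, as indicated above, reduces to Lemma \ref{lemma1} after the sign change $w_{2}\mapsto -w_{2}$ and the remark that $\mathcal{A}[y(u^{k})]^{*}$ retains the ellipticity constants of $\mathcal{A}[y(u^{k})]$.
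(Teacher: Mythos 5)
Your proposal is correct and follows essentially the same route as the paper: both are duality arguments obtained by cross-testing the two saddle-point systems, the only difference being organizational — the paper first isolates the identity $p_2=\Lambda_{\alpha,D}(\mathcal{A}[y(u^k)])^*p_1$ via an auxiliary system with arbitrary data $\tilde v$ and then unwinds the operator definitions, whereas you fix the test function $\phi$ at the outset and specialize that auxiliary system to $\tilde v=\mathcal{H}[u^k]\phi$, merging the two stages into one chain. Your additional remark on the well-posedness of \eqref{eqn27} via the sign change $w_2\mapsto -w_2$ and Lemma \ref{lemma1} is a sound supplement that the paper leaves implicit.
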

\begin{proof}
First we verify $p_2 = \Lambda_{\alpha,D}(\mathcal{A}[y(u^{k})])^{*} p_1$.
For any $\tilde{v} \in L^{2}(\Gamma)$, let $(\tilde{w}_{2}, \tilde{p}_{2})$ solve 
\begin{equation}\label{eqn28}
\left\{
\begin{aligned}
\langle \mathcal{A}[y(u^{k})] \tilde{w}_{2}, z \rangle_{(H^{1}(\Omega))', H^{1}(\Omega)} - (\tilde{p}_{2}, \mathcal{T} z)_{L^{2}(\Gamma)} &= 0, \quad &\forall z \in H^{1}(\Omega), \\
(\mathcal{T} \tilde{w}_{2}, q)_{L^{2}(\Gamma)} + (\alpha_{D} \tilde{p}_{2}, q)_{L^{2}(\Gamma)} &= (\tilde{v}, q)_{L^{2}(\Gamma)}, \quad &\forall q \in L^{2}(\Gamma).
\end{aligned}
\right.
\end{equation}
The definition \eqref{eqn12} of the HR-DtN map implies 
\begin{equation*} 
\tilde{p}_{2} = \Lambda{\alpha,D}(\mathcal{A}[y(u^{k})]) \tilde{v}\quad \mbox{and}\quad p_{1} = \Lambda_{\alpha,D}(\mathcal{A}[y(u^{k})]) v.
\end{equation*}
By the second line of \eqref{eqn27}) and the first line of \eqref{eqn28}, we deduce
\begin{align*}
& (\Lambda_{\alpha,D}(\mathcal{A}[y(u^{k})])^{*}p_{1},  \tilde{v})_{L^{2}(\Gamma)}=(p_{1}, \Lambda_{\alpha,D}(\mathcal{A}[y(u^{k})]) \tilde{v})_{L^{2}(\Gamma)} = (p_{1}, \tilde{p}_{2})_{L^{2}(\Gamma)} \\
{=}& -(\mathcal{T} w_{2}, \tilde{p}_{2})_{L^{2}(\Gamma)} + (\alpha_{D} p_{2}, \tilde{p}_{2})_{L^{2}(\Gamma)} 
{=} -\langle \mathcal{A}[y(u^{k})] \tilde{w}_{2}, w_{2} \rangle_{(H^{1}(\Omega))', H^{1}(\Omega)} + (\alpha_{D} \tilde{p}_{2}, p_{2})_{L^{2}(\Gamma)} \\
=& -\langle \mathcal{A}[y(u^{k})]^{*} w_{2}, \tilde{w}_{2} \rangle_{(H^{1}(\Omega))', H^{1}(\Omega)} + (\alpha_{D} \tilde{p}_{2}, p_{2})_{L^{2}(\Gamma)}.
\end{align*}
Now by the first line of \eqref{eqn27} and the second line of \eqref{eqn28}, we deduce
\begin{align*}
(\Lambda_{\alpha,D}(\mathcal{A}[y(u^{k})])^{*}p_{1},  \tilde{v})_{L^{2}(\Gamma)}&{=} (\mathcal{T} w_{2}, \tilde{p}_{2})_{L^{2}(\Gamma)} + (\alpha_{D} \tilde{p}_{2}, p_{2})_{L^{2}(\Gamma)}
{=}(p_{2}, \tilde{v})_{L^{2}(\Gamma)},
\end{align*}
Since $\tilde{v}$ is arbitrary, we get $p_2 = \Lambda_{\alpha,D}(\mathcal{A}[y(u^{k})])^{*} p_1$. The first line in \eqref{eqn27} yields 
$$w_{2} = -\mathcal{A}[y(u^{k})]^{-*} \mathcal{T}^{*} p_{2}.$$
For any $q \in L^{\infty}(\Omega)$, the definitions of $\mathcal{H}[u]$, and $\mathcal{H}_{D}[u]$ imply
\begin{align*}
&\langle \mathcal{B}_{\tau}[y(u^{k})]^{*} w_{2}, q \rangle_{(L^{\infty}(\Omega))', L^{\infty}(\Omega)} = \langle -\mathcal{B}_{\tau}[y(u^{k})]^{*} \mathcal{A}[y(u^{k})]^{-*} \mathcal{T}^{*} p_{2}, q \rangle_{(L^{\infty}(\Omega))', L^{\infty}(\Omega)} \\
{=}& -\langle \mathcal{H}[u^{k}]^{*} p_{2}, q \rangle_{(L^{\infty}(\Omega))', L^{\infty}(\Omega)} 
= -(\Lambda_{\alpha,D}(\mathcal{A}[y(u^{k})])^{*} p_{1}, \mathcal{H}[u^{k}] q)_{L^{2}(\Gamma)}\\
=& -(\Lambda_{\alpha,D}(\mathcal{A}[y(u^{k})]) v, \Lambda_{\alpha,D}(\mathcal{A}[y(u^{k})]) \mathcal{H}[u^{k}] q)_{L^{2}(\Gamma)}{=}-\langle \mathcal{H}_{D}[u^{k}]^{*} v, q \rangle_{(L^{\infty}(\Omega))', L^{\infty}(\Omega)},
\end{align*}
This shows $\mathcal{B}_{\tau}[y(u^{k})]^{*} w_{2} = -\mathcal{H}_{D}[u^{k}]^{*} v$, and completes the proof of the lemma.
\end{proof}

The adjoint $\mathcal{B}_{\tau}[y(u^{k})]^{*}$ accounts for different inclusion types and aggregates multiple datasets.
Consider the scenario with $L$ types of inclusions with the corresponding operators $\{\mathcal{B}^{\ell}\}_{\ell=1}^L$, and $I$ measurements for sources $\{f_i\}_{i=1}^I$.
Then the operator $\mathcal{B}$ and parameter $u$ are vector-valued, with
\begin{equation*}
\mathcal{B}[u](y)=(\mathcal{B}^{1}[\cdot](y),\mathcal{B}^{2}[\cdot](y),\dots,\mathcal{B}^{L}[\cdot](y))(u^{1},u^{2},\dots,u^{L})^{\top}=\sum_{\ell=1}^{L} \mathcal{B}^{\ell}[u_{\ell}](y),
\end{equation*}
where each $u_{\ell}$ takes the form \eqref{eqn2}.
The governing system is given by
\begin{equation*}
\mathcal{A}[y_i]y_i + \sum_{\ell=1}^{L} \mathcal{B}^{\ell}[u_{\ell}](y_i) = f_i, \quad i = 1, \dots, I.
\end{equation*}
Then the scattering fields across all $I$ datasets can be represented in a matrix-vector form:
\begin{equation*}
\begin{pmatrix} y^{s}_{1} \\ \vdots \\ y^{s}_{I} \end{pmatrix} = \mathcal{T} \begin{pmatrix}
\mathcal{A}[y_{1}]^{-1}\mathcal{B}^{1}_{\tau}[y_{1}] & \mathcal{A}[y_{1}]^{-1}\mathcal{B}^{2}_{\tau}[y_{1}] & \cdots & \mathcal{A}[y_{1}]^{-1}\mathcal{B}^{L}_{\tau}[y_{1}] \\
\vdots & \vdots & \ddots & \vdots \\
\mathcal{A}[y_{I}]^{-1}\mathcal{B}^{1}_{\tau}[y_{I}] & \mathcal{A}[y_{I}]^{-1}\mathcal{B}^{2}_{\tau}[y_{I}] & \cdots & \mathcal{A}[y_{I}]^{-1}\mathcal{B}^{L}_{\tau}[y_{I}]
\end{pmatrix} \begin{pmatrix} u_{1} \\ u_{2} \\ \vdots \\ u_{L} \end{pmatrix},
\end{equation*}
where $\mathcal{T}$ acts component-wise on the vector.
Thus, the forward operator $\mathcal{H}[u]$ is an operator matrix mapping from $(L^{\infty}(\Omega))^{L}$, i.e., $(u_{1},u_{2},\dots,u_{L})^{\top}$, to $(L^{2}(\Gamma))^{I}$, i.e., $(y^{s}_{1},\dots,y^{s}_{I})^{\top}$.
The adjoint operator $\mathcal{H}[u]^{*}$ maps from $(L^{2}(\Gamma))^{I}$ to $((L^{\infty}(\Omega))')^{L}$, with the dual function $\zeta$ given by
\begin{align*}
\zeta =& \begin{pmatrix} \zeta_{1} \\ \zeta_{2} \\ \vdots \\ \zeta_{L} \end{pmatrix}
= \begin{pmatrix}
\mathcal{B}^{1}_{\tau}[y_{1}]^{*} & \cdots & \mathcal{B}^{1}_{\tau}[y_{I}]^{*} \\
\mathcal{B}^{2}_{\tau}[y_{1}]^{*} & \cdots & \mathcal{B}^{2}_{\tau}[y_{I}]^{*} \\
\vdots & \ddots & \vdots \\
\mathcal{B}^{L}_{\tau}[y_{1}]^{*} & \cdots & \mathcal{B}^{L}_{\tau}[y_{I}]^{*}
\end{pmatrix} \begin{pmatrix} \mathcal{A}[y_{1}]^{-*}\mathcal{T}^{*}y^{s}_{d,1} \\ \vdots \\ \mathcal{A}[y_{I}]^{-*}\mathcal{T}^{*}y^{s}_{d,I} \end{pmatrix}.
\end{align*}
This allows distinguishing inclusions from different physical nature. For the $\ell$th inclusion  type, the index function is given by
\begin{equation*}
\zeta^{\ell} = \sum_{i=1}^{I} \mathcal{B}_{\tau}^{\ell}[y_{i}] \mathcal{A}[y_{i}]^{-*} \mathcal{T}^{*} y^{s}_{d,i},
\end{equation*}
which aggregates information across all scattering fields.

We now detail the numerical construction of the resolver.
The operators $\mathcal{R}^0$ and $\mathcal{S}$ are applied with a mesh coarsening technique.
The domain is discretized via the finite element method with two meshes: a fine mesh $\mathscr{T}_f$ and a coarse mesh $\mathscr{T}_c$.
For a piecewise linear function $\zeta$ defined on $\mathscr{T}_f$, the action of $\mathcal{R}^0$ is computed in three sequential steps: first, $\zeta$ is multiplied pointwise by $\sqrt{D(x)}$; second, the resulting function is projected onto a piecewise constant basis over the coarse mesh $\mathscr{T}_c$; third, the projected values are multiplied again by $\sqrt{D(x)}$.
The stabilizer $\mathcal{S}$ is implemented in a similar manner.
At the $k$th iteration, at the stabilization stage, we apply the operator $\mathcal{S}$ to the low-rank term $\delta \mathcal{R}^{k-1}$, cf. \eqref{eqn:DFP} or \eqref{eqn:BFG}, by projecting $\hat{\eta}^{k}$ and $\widetilde{\mathcal{R}}^{k-1}\hat{\zeta}^{k}$ onto the coarse mesh $\mathscr{T}_{c}$:
\begin{equation*}
\begin{aligned}
\mathcal{S}\delta\mathcal{R}^{k-1}_{\mathrm{DFP}}\mathcal{S} &= \frac{\mathcal{S}\hat{\eta}^{k} \otimes \mathcal{S}\hat{\eta}^{k}}{\langle \hat{\zeta}^{k}, \hat{\eta}^{k} \rangle_{(L^{\infty}(\Omega))', L^{\infty}(\Omega)}} - \frac{\mathcal{S}\widetilde{\mathcal{R}}^{k-1} \hat{\zeta}^{k} \otimes \mathcal{S}\widetilde{\mathcal{R}}^{k-1} \hat{\zeta}^{k}}{\langle \hat{\zeta}^{k}, \widetilde{\mathcal{R}}^{k-1} \hat{\zeta}^{k} \rangle_{(L^{\infty}(\Omega))', L^{\infty}(\Omega)}}, \\
\mathcal{S}\delta\mathcal{R}_{\mathrm{BFG}}^{k-1}\mathcal{S} &= \frac{\mathcal{S}(\hat{\eta}^{k} - \widetilde{\mathcal{R}}^{k-1} \hat{\zeta}^{k}) \otimes \mathcal{S}\hat{\eta}^{k} + \mathcal{S}\hat{\eta}^{k} \otimes \mathcal{S}(\hat{\eta}^{k} - \widetilde{\mathcal{R}}^{k-1} \hat{\zeta}^{k})}{\langle \hat{\zeta}^{k}, \hat{\eta}^{k} \rangle_{(L^{\infty}(\Omega))', L^{\infty}(\Omega)}} \\
&\quad - \frac{\langle \hat{\eta}^{k} - \widetilde{\mathcal{R}}^{k-1} \hat{\zeta}^{k}, \hat{\zeta}^{k} \rangle_{L^{\infty}(\Omega), L^{1}(\Omega)}}{\langle \hat{\zeta}^{k}, \hat{\eta}^{k} \rangle_{(L^{\infty}(\Omega))', L^{\infty}(\Omega)}^{2}} \mathcal{S}\hat{\eta}^{k} \otimes \mathcal{S}\hat{\eta}^{k}.
\end{aligned}
\end{equation*}
Then all low-rank terms is damped by the factor $\frac{1}{1+\lambda_{k-1,p}}$.
In the correction stage, the following quantities are computed and stored: the auxiliary index function $\hat{\eta}^{k+1}$; function $\widetilde{\mathcal{R}}^{k}\hat{\zeta}^{k+1}$; the dual pairings $\langle \hat{\zeta}^{k+1},\hat{\eta}^{k+1}\rangle_{(L^{\infty}(\Omega))',L^{\infty}(\Omega)}$ and $\langle \hat{\zeta}^{k+1},\widetilde{\mathcal{R}}^{k}\hat{\zeta}^{k+1}\rangle_{(L^{\infty}(\Omega))',L^{\infty}(\Omega)}$; and the damping parameter $\lambda_{k,p}$.
These computed values are subsequently employed in the low-rank update formulae \eqref{eqn:DFP} or \eqref{eqn:BFG} to construct the resolver $\mathcal{R}^{k+1}$ for the subsequent iteration.
The damping parameter $\lambda_{k,p}$ is preserved for use in the stabilization stage of the next iterative step.

The detailed algorithm for the IDSM is given in in Algorithm \ref{alg1}.

\begin{algorithm}[hbt!]
\caption{IDSM for Elliptic Inverse Problems with Partial Cauchy Data}
\label{alg1}
\begin{algorithmic}[1]
\Require{Boundary measurements $y_d$, diagonal function $D(x)$, regularization parameters $(\alpha_{d}, \alpha_{n})$, integral index $p$, maximum iterations $K$}
\Ensure{Reconstructed inclusion $u^K$ and state $y(u^K)$}

\State \textbf{Initialization:}
\State Set initial guess $u^0 \gets 0$ and $\lambda_{-1,p} \gets 0$.
\State Solve forward problem for $y(u^0)$ via \eqref{eqn1}.
\State Set background solution $y_\emptyset(u^0)$ via \eqref{eqn6}.

\For{$k = 0, 1, 2, \dots, K-1$}
\State \textbf{Data completion:}
\State $\tilde{y}_d(u^k) \gets y_d + \mathcal{T}_N y(u^k)$.

\State \textbf{Dual function computation:}
\State Compute $\zeta^k \gets \mathcal{H}_D[u^k]^* (y_\emptyset(u^k)-\tilde{y}_d(u^k))$ via Lemma~\ref{lemma2}.

\State \textbf{Index function computation:}
\State $\eta^{k+1} \gets \mathcal{R}^k \zeta^k$.

\State \textbf{Parameter update:}
\State $u^{k+1} \gets \mathcal{P}_{[a,b]}(\eta^{k+1})$.
\State Solve $y(u^{k+1})$ via \eqref{eqn1}.

\If{$k \neq K-1$}
\State \textbf{Auxiliary field computation:}
\State Solve $y_\emptyset(u^{k+1})$ via \eqref{eqn6}. \Comment{For nonlinear $\mathcal{A}$ only}
\State $\hat{y}^s_d(u^{k+1}) \gets y_\emptyset(u^{k+1}) - \mathcal{T} y(u^{k+1})$.
\State Compute $\hat{\zeta}^{k+1} \gets \mathcal{H}_D[u^{k+1}]^* \hat{y}^s_d(u^{k+1})$ via Lemma~\ref{lemma2}.

\State \textbf{Resolver stabilization:}
\State Project latest low-rank terms onto coarse mesh $\mathscr{T}_c$.
\If{$k=0$}
\State Set $C_{\lambda}$ such that $\lambda_{0,p}=1.0$.
\EndIf
\State Apply damping factor $\frac{1}{1+\lambda_{k-1,p}}$ to low-rank terms.

\State \textbf{Auxiliary index computation:}
\State Compute $\widetilde{\mathcal{R}}^k \hat{\zeta}^{k+1}$ and $\hat{\eta}^{k+1}$ via \eqref{eqn30}.

\State \textbf{Resolver update:}
\State Update $\mathcal{R}^{k+1} \gets \widetilde{\mathcal{R}}^k + \delta \mathcal{R}^{k}$ using \eqref{eqn:DFP} or \eqref{eqn:BFG}.
\State Update scaling constant $C_D$ via \eqref{eqn16}.

\State \textbf{Damping factor update:}
\State Compute $\lambda_{k,p}$ for next iteration.
\EndIf
\EndFor

\State \textbf{Return} $u^K$, $y(u^K)$
\end{algorithmic}
\end{algorithm}

Finally, we discuss the complexity of Algorithm \ref{alg1} with one pair of Cauchy data.
Solving elliptic equations (\eqref{eqn1}, \eqref{eqn6}, and \eqref{eqn29}–\eqref{eqn27}) represents the dominant cost.
Other operations, e.g., the adjoint $\mathcal{B}_{\tau}[y]^{*}$, resolver $\mathcal{R}^{k}$, or stabilized resolver $\widetilde{\mathcal{R}}^{k}$, are cheap.
Thus, we count the complexity of the algorithm by the number of elliptic PDE solves:
(i) The initialization phase (steps 1–3) requires one single PDE solve \eqref{eqn6};
(ii) Steps 5–8 involve solving problem \eqref{eqn1} and \eqref{eqn29}–\eqref{eqn27}, $3$ PDE solves.
(iii) Steps 10–15 require one adjoint solve and one background solve \eqref{eqn6} (for nonlinear $\mathcal{A}$).
For linear $\mathcal{A}$, each iteration requires $2$ PDE solves; for nonlinear $\mathcal{A}$, $3$ solves.
Thus, after $K$ iterations, totals are $5K-1$ (linear) or $6K-2$ (nonlinear) solves.
Numerical experiments in Section \ref{sec:NUM} {show that the IDSM can give quite reasonable reconstructions within about one dozen iterations, requiring only 30–70 elliptic PDE solves.}

\section{Numerical experiments} \label{sec:NUM}
\newcommand{\sevenfiglen}{0.11\textwidth}

Now we present numerical results to illustrate the proposed IDSM.
Example \ref{exam1} shows the noise robustness of the IDSM.
Example \ref{exam2} examines the DOT problem in \eqref{eqn3} with diverse $\Gamma_D$.
Example \ref{exam3} revisits the DOT model with various integral index $p$, and Example \ref{exam4} studies the cardiac electrophysiology (CE) model \cite{Scacchi2014}.
Finally, Example \ref{exam5} recovers a modulus nonlinear potential inhomogeneity.
These experiments show the stability and effectiveness of the IDSM.
In Section \ref{subsec:pltLambda}, we analyze the damping factor $\frac{1}{\lambda_{k-1,p}}$.

The numerical experiments are conducted on the unit disk $\Omega = \{(x_1, x_2) \mid x_1^2 + x_2^2 < 1\}$.
The exact data $y(u_*)$ is computed by solving problem \eqref{eqn1} on a fine mesh with 42996 triangles.
The noisy data $y_d$ is obtained by adding noise into $y(u_{*})$ pointwise by
$y_{d}(x) = y(u_{*})(x)+\varepsilon\delta\left(y(u_{*})-y(0)\right)(x)$,
where $\delta$ follows the uniform distribution $\mathcal{U}(-1,1)$ and $\varepsilon$ is the relative noise level.
We implement both DFP in \eqref{eqn:DFP} and BFG in \eqref{eqn:BFG}, but show results only for one scheme.
The reconstructions are visualized via heatmaps of the normalized inclusion $\frac{u^k}{\|u^k\|_{L^\infty}}$. The inclusion is indicated by the red color, the background by the blue color, and the boundaries of the exact inclusion $\omega_{j}$ are plotted in black contours.
Unless otherwise specified, Algorithm \ref{alg1} is conducted with the parameter given in Table \ref{table1}.

{
For each example, we present reconstructions at three stages: the initial estimate ($k = 0$), which corresponds to the standard DSM applied to the partial data problem; an intermediate iterate ($k=10$–$15$) where convergence is typically indicated by the U-shape of the damping factor $\lambda_{k,p}$; and the $30$th iterate demonstrating stability over extended iterations.
Our experiments confirm that the IDSM provides a systematic refinement of the index function of DSM, i.e., index function of $k=0$, and the reconstructions achieve a stable configuration from which they do not deteriorate with further iteration.
}

\begin{table}[hbt!]
\centering
\caption{Parameters for numerical experiments}
\label{table1}
\begin{tabular}{ll}
\hline
\textbf{Parameter} & \textbf{Value} \\
\hline
Relative noise level ($\varepsilon$) & $15\%$ \\
Boundary fluxes ($f_1$, $f_2$) & $\sin(4\pi x_1)+0.5$, $\cos(4\pi x_2)+0.5$ \\
Regularization parameter on $\Gamma_D$ ($\alpha_{d}$) & $0.05$ \\
Regularization parameter on $\Gamma_N$ ($\alpha_{n}$) & $2.0$ \\
Integral index ($p$)& $2.0$\\
Mesh $\mathscr{T}_f$ (triangles) & $15728$ \\
Mesh $\mathscr{T}_c$ (triangles) & $1770$ \\
$\gamma$ for conductivity inhomogeneity & $4.0$ \\
$\gamma$ for potential inhomogeneity (linear and nonlinear) & $2.0$ \\
$\gamma$ for mixed inhomogeneity (CE problem) & $3.0$ \\
\hline
\end{tabular}
\end{table}

\subsection{Example 1: EIT}\label{exam1}
The EIT problem is governed by the elliptic system:
\begin{equation*}
\left\{
\begin{aligned}
-\nabla\cdot((1+u_{c})\nabla y)&=0,&\text{in }\Omega,\\
\partial_{n}y&=f,&\text{on }\Gamma,
\end{aligned}
\right.
\end{equation*}
where $u_c$ denotes the conductivity inclusion with $c_i = -0.9$ (i.e., the conductivity is $0.1$ within inclusions).
The physical constraint $ -0.99 \leq u_c \leq 0$ is enforced via the projection $\mathcal{P}(\eta) = \max\{\min\{\eta, 0.0\}, -0.99\}$, with an unknown $c_i$.
Consider two noise levels $\varepsilon_1 = 15\%$ and $\varepsilon_2 = 30\%$.
The reconstructions by the BFG correction scheme are shown in Fig. \ref{fig1.1}.
In Fig. \ref{fig1.2}, we also conduct an ablation study: (i) \textbf{Full-data benchmark}: Replace $\tilde{y}_{d}(u^k)$ with full Cauchy data $y_d$ to evaluate the efficacy of data completion; (ii) \textbf{Homogeneous DtN map}: Replace $\Lambda_{\alpha,D}$ with $\Lambda_\alpha$ (uniform $\alpha$ across $\Gamma$) to show the necessity of heterogeneous regularization; and (iii) \textbf{Unstabilized scheme}: Disable stabilization by setting $\lambda_{k-1,p} \equiv 0$ to examine the impact of the stabilization-correction scheme.

\begin{figure}[hbt!]
\centering
\begin{tabular}{ccccccc}
\includegraphics[width = \sevenfiglen, trim = {0.25cm, 0.25cm, 0.25cm, 0.25cm}, clip]{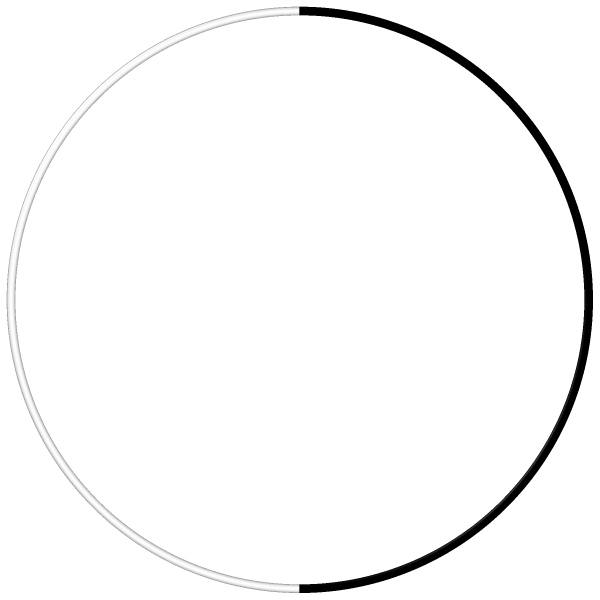}&
\includegraphics[width = \sevenfiglen, trim = {0.25cm, 0.25cm, 0.25cm, 0.25cm}, clip]{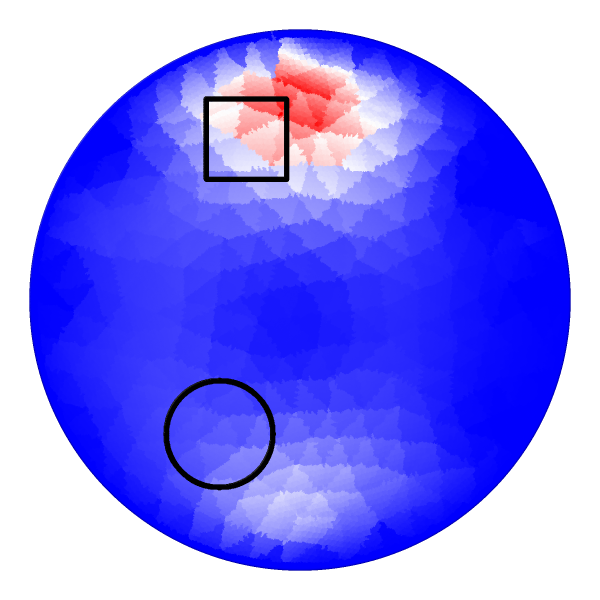}&
\includegraphics[width = \sevenfiglen, trim = {0.25cm, 0.25cm, 0.25cm, 0.25cm}, clip]{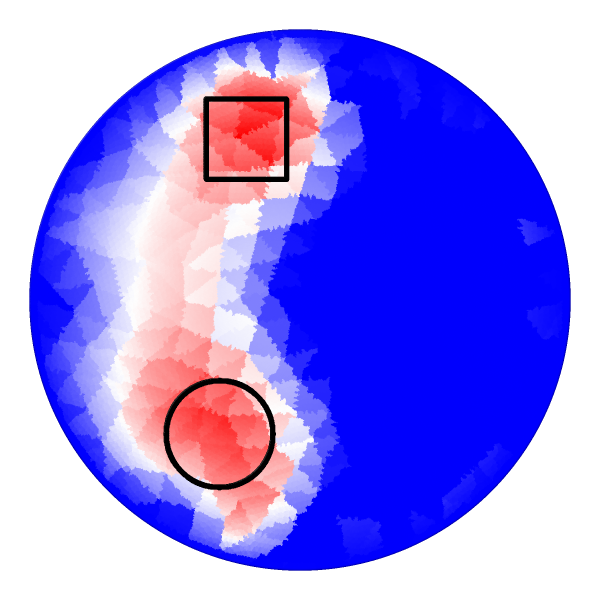}&
\includegraphics[width = \sevenfiglen, trim = {0.25cm, 0.25cm, 0.25cm, 0.25cm}, clip]{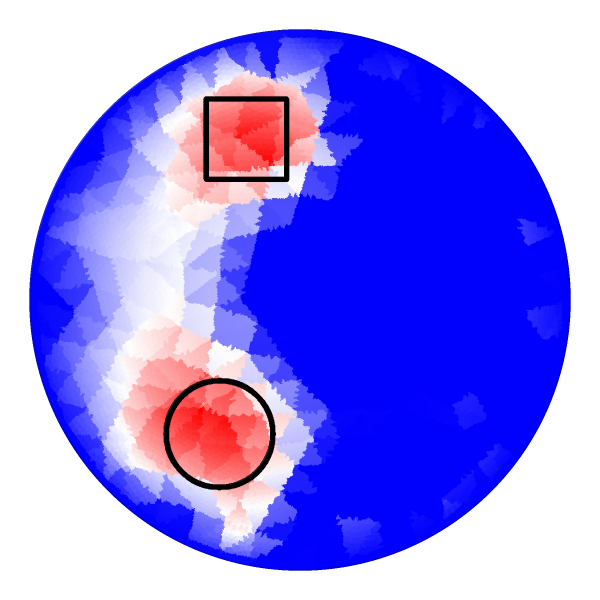}&
\includegraphics[width = \sevenfiglen, trim = {0.25cm, 0.25cm, 0.25cm, 0.25cm}, clip]{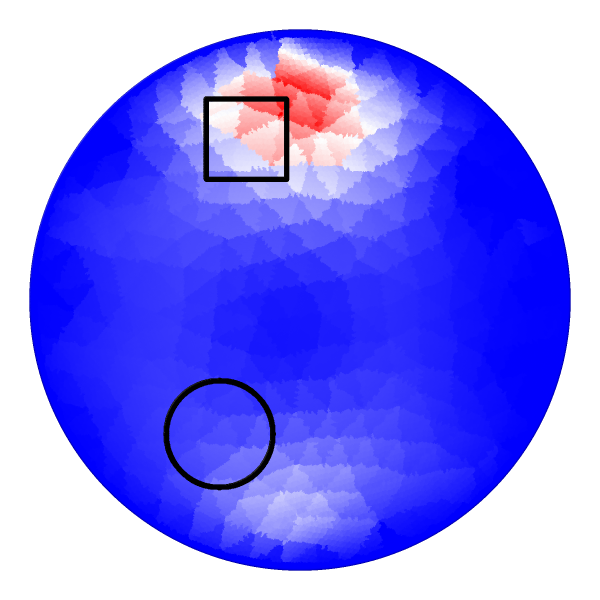}&
\includegraphics[width = \sevenfiglen, trim = {0.25cm, 0.25cm, 0.25cm, 0.25cm}, clip]{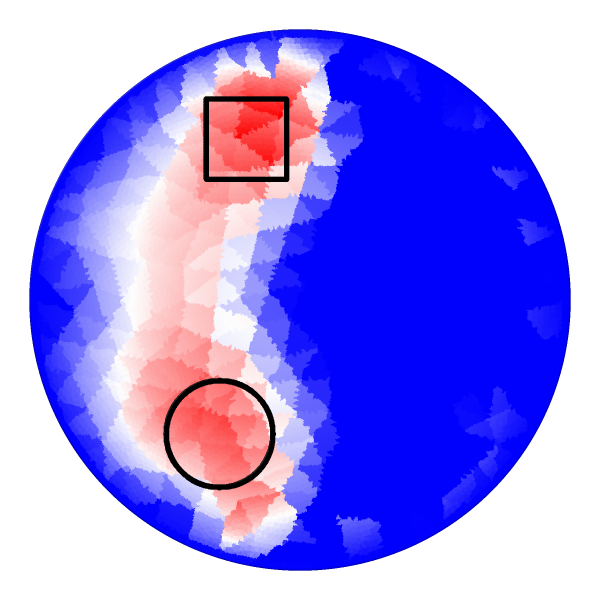}&
\includegraphics[width = \sevenfiglen, trim = {0.25cm, 0.25cm, 0.25cm, 0.25cm}, clip]{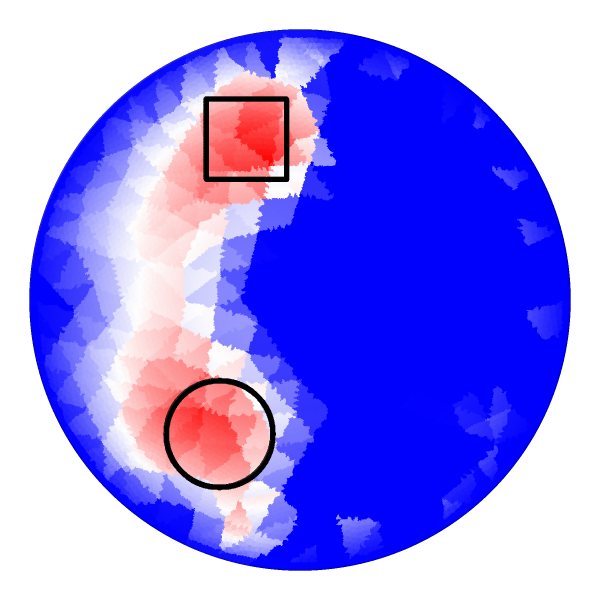}\\
$\Gamma_{D}$& $k=0$ & $k=10$ & $k=30$& $k=0$ & $k=10$ & $k=30$
\end{tabular}
\caption{
The reconstruction of conductivity inclusion $u_c$ for Example \ref{exam1}.
Columns 2–4: Estimates for 15\% noise.
Columns 5–7: Estimates for 30\% noise.
\label{fig1.1}
}
\end{figure}

\begin{figure}[hbt!]
\centering
\begin{tabular}{ccccccc}
\includegraphics[width = \sevenfiglen, trim = {0.25cm, 0.25cm, 0.25cm, 0.25cm}, clip]{figure/split1.png}&
\includegraphics[width = \sevenfiglen, trim = {0.25cm, 0.25cm, 0.25cm, 0.25cm}, clip]{figure/Example4/n3c0000.png}&
\includegraphics[width = \sevenfiglen, trim = {0.25cm, 0.25cm, 0.25cm, 0.25cm}, clip]{figure/Example4/n3c0002.png}&
\includegraphics[width = \sevenfiglen, trim = {0.25cm, 0.25cm, 0.25cm, 0.25cm}, clip]{figure/Example4/n3c0006.png}&
\includegraphics[width = \sevenfiglen, trim = {0.25cm, 0.25cm, 0.25cm, 0.25cm}, clip]{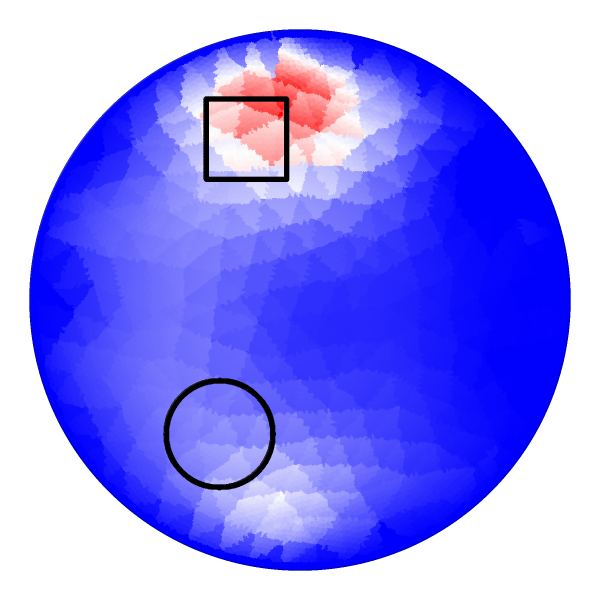}&
\includegraphics[width = \sevenfiglen, trim = {0.25cm, 0.25cm, 0.25cm, 0.25cm}, clip]{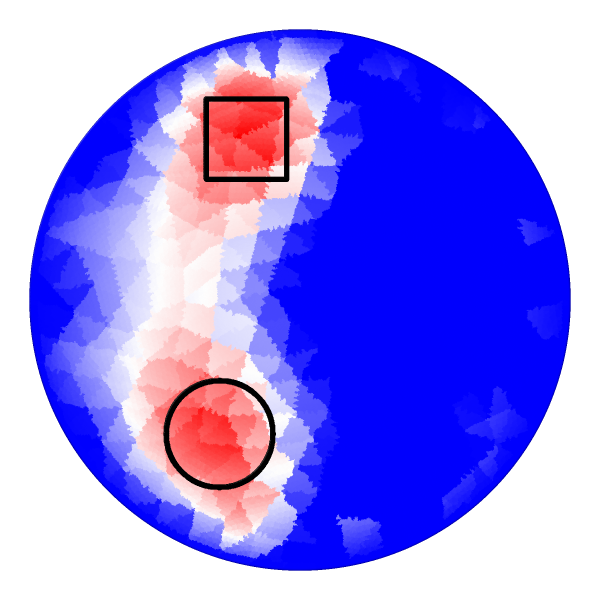}&
\includegraphics[width = \sevenfiglen, trim = {0.25cm, 0.25cm, 0.25cm, 0.25cm}, clip]{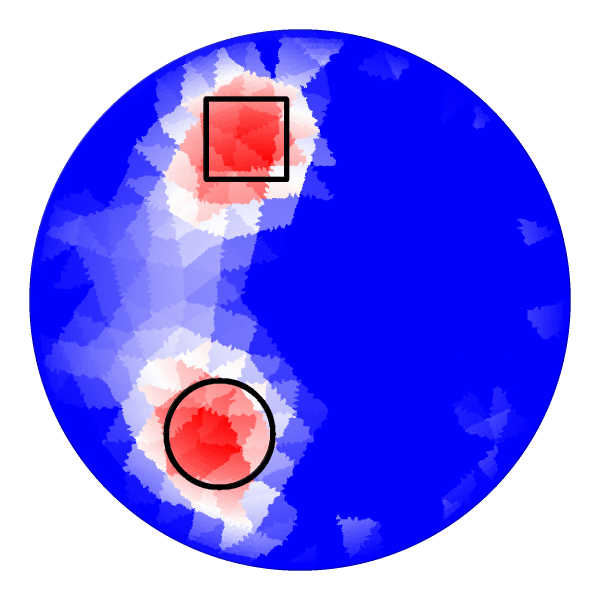}\\
$\Gamma_{D}$& $k=0$ & $k=10$ & $k=30$& $k=0$ & $k=10$ & $k=30$\\
\includegraphics[width = \sevenfiglen, trim = {0.25cm, 0.25cm, 0.25cm, 0.25cm}, clip]{figure/split1.png}&
\includegraphics[width = \sevenfiglen, trim = {0.25cm, 0.25cm, 0.25cm, 0.25cm}, clip]{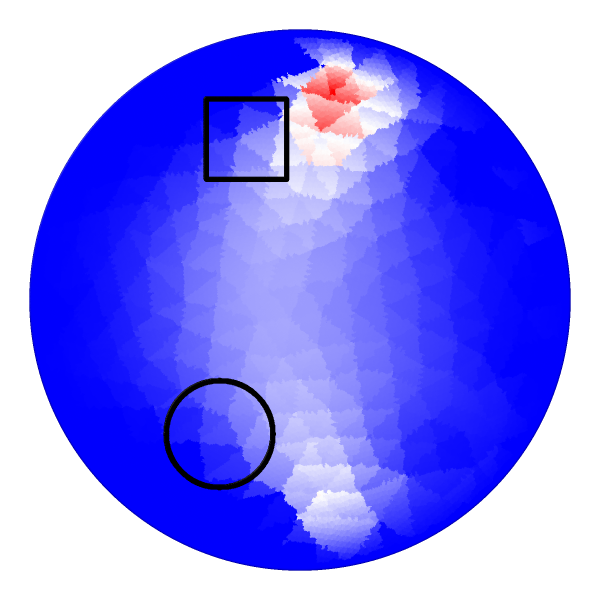}&
\includegraphics[width = \sevenfiglen, trim = {0.25cm, 0.25cm, 0.25cm, 0.25cm}, clip]{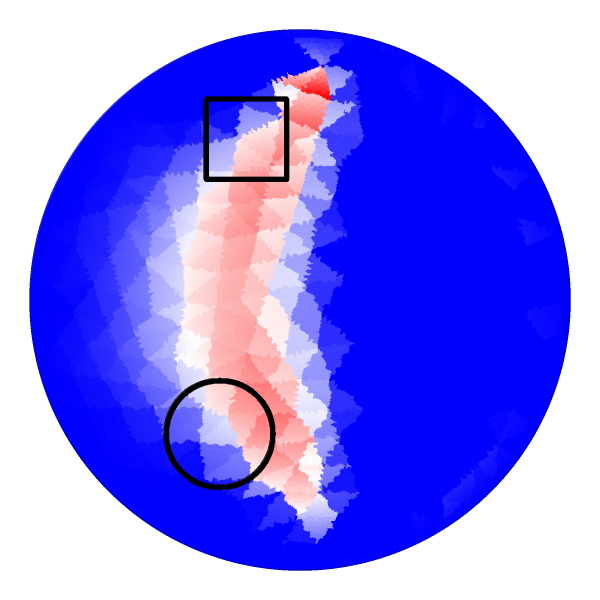}&
\includegraphics[width = \sevenfiglen, trim = {0.25cm, 0.25cm, 0.25cm, 0.25cm}, clip]{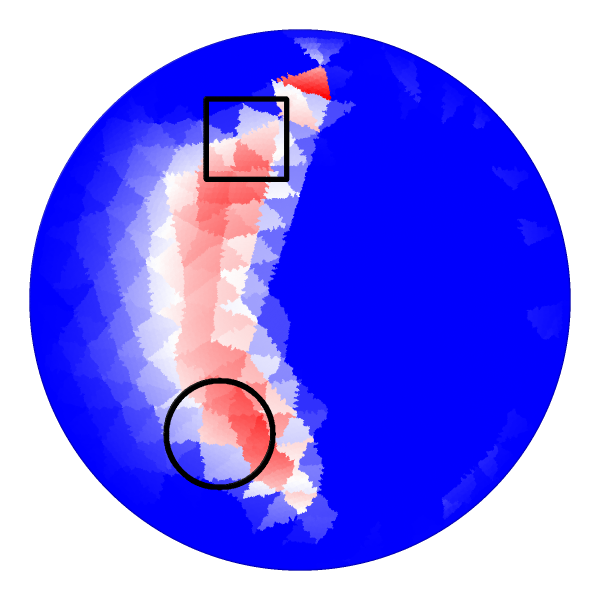}&
\includegraphics[width = \sevenfiglen, trim = {0.25cm, 0.25cm, 0.25cm, 0.25cm}, clip]{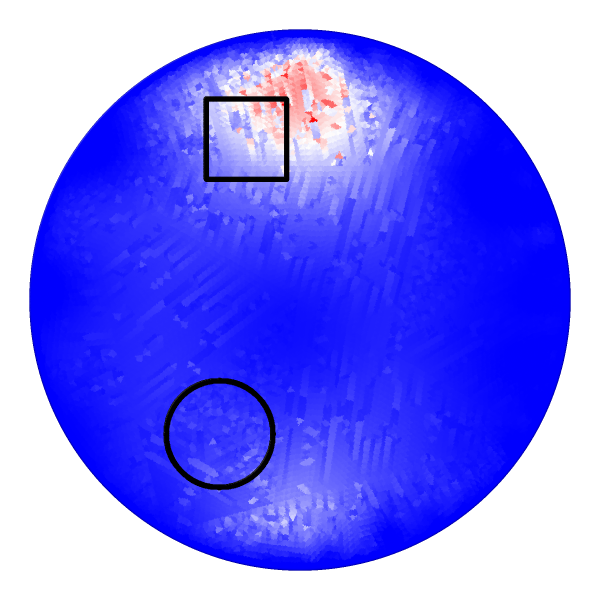}&
\includegraphics[width = \sevenfiglen, trim = {0.25cm, 0.25cm, 0.25cm, 0.25cm}, clip]{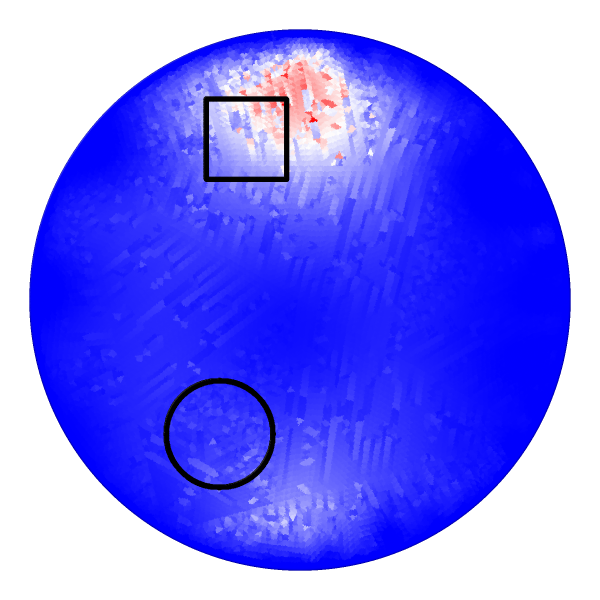}&
\includegraphics[width = \sevenfiglen, trim = {0.25cm, 0.25cm, 0.25cm, 0.25cm}, clip]{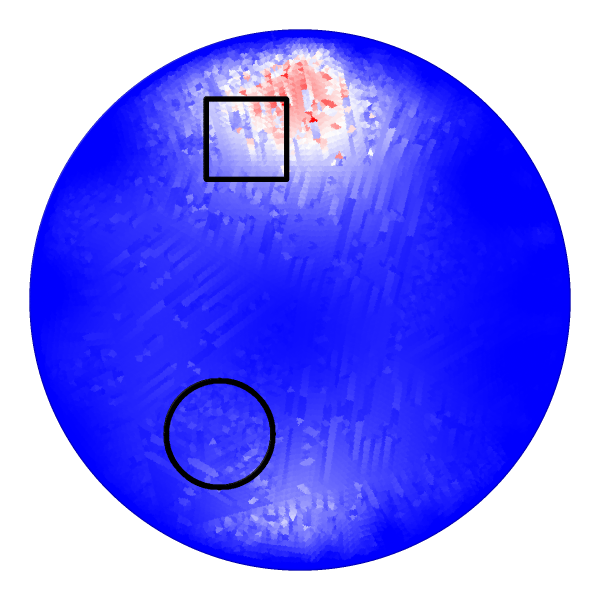}\\
$\Gamma_{D}$& $k=0$ & $k=10$ & $k=30$& $k=0$ & $k=10$ & $k=30$
\end{tabular}
\caption{
The reconstructions for Example \ref{exam1}.
Top row: Partial data (columns 2–4) versus full data (columns 5–7).
Bottom row: Homogeneous DtN map (columns 2–4) and unstabilized scheme (columns 5–7).
\label{fig1.2}
}
\end{figure}

The results show  the robustness of the IDSM.
It accurately localizes the conductivity inclusion despite high noise ($\varepsilon=30\%$) and spatial disjointness (an inclusion resides in the left half of $\Omega$ while measurements are restricted to the right half of $\Gamma$), showing resilience to incomplete data and large noise.
The initial estimate ($k=0$) fails to localize the lower circular inclusion, while the iterative refinement achieves reliable localization by $k=10$.
Under $\varepsilon=30\%$, the estimate remains stable up to $k=30$ iterations, confirming the stability of the iterative correction.

The ablation study confirms the necessity of each innovation:
The close match between the results for partial data and the full-data benchmark (Fig. \ref{fig1.2}, top row, columns 5–7) validates the data completion scheme.
{While the full-data reconstruction, benefiting from the complete scattering field, yields a more distinct image of the two inclusions, the partial data estimate reconstructed without information on the inaccessible boundary is remarkably comparable, showing its capability to effectively reconstruct missing boundary information.}
The superior accuracy of the HR-DtN map over the homogeneous DtN map (Fig. \ref{fig1.2}, bottom row, columns 2–4) underscores the crucial role of heterogeneous regularization, which enhances the sensitivity on $\Gamma_D$ while suppressing noise on $\Gamma_N$.
The pronounced inaccuracies and slow convergence of the unstabilized scheme (Fig. \ref{fig1.2}, bottom row, columns 5–7) highlight the indispensable role of the stabilization-correction scheme in controlling error propagation and ensuring stability {over extended iterations}.
These results show that data completion, HR-DtN map, and stabilization-correction work synergistically to achieve robust reconstructions under partial data constraints.

\subsection{Example 2: DOT model}\label{exam2}

This example examines the IDSM on DOT in \eqref{eqn3}.
Within their inclusions, the conductivity $c = 0.1$, while the potential $a = 10$; both have unit background ($c_0 = 1$, $p_0 = 1$).
The pointwise projection $\mathcal{P}$ is set to $
\mathcal{P}(\eta_c)=\max\{\min\{\eta_{c}, 0.0\}, -0.99\}$ and $\mathcal{P}(\eta_{p})=
\min\{\max\{\eta_{p}, 0.0\}, 19\}$.
Consider recovering overlapping inclusions of distinct types under three configurations of $\Gamma_D$: two quarter-circles, one third-circle, and one quarter-circle.
The reconstructions using DFP correction are shown in Fig. \ref{fig2}.

\begin{figure}[hbt!]
\centering
\begin{tabular}{ccccccc}
\includegraphics[width = \sevenfiglen, trim = {0.25cm, 0.25cm, 0.25cm, 0.25cm}, clip]{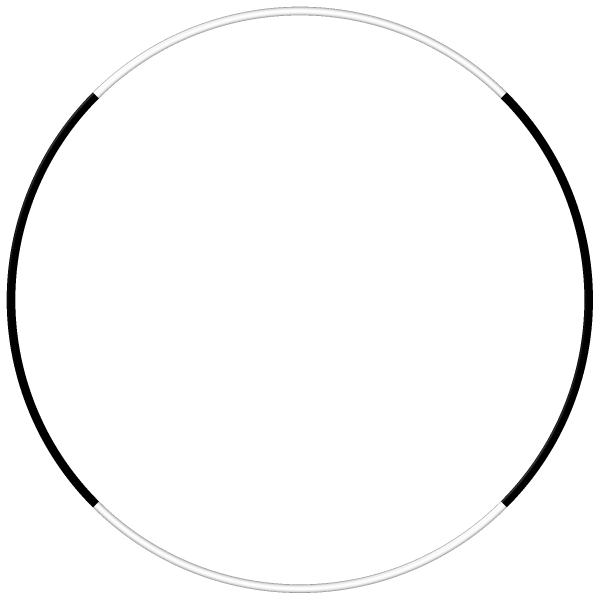}&
\includegraphics[width = \sevenfiglen, trim = {0.25cm, 0.25cm, 0.25cm, 0.25cm}, clip]{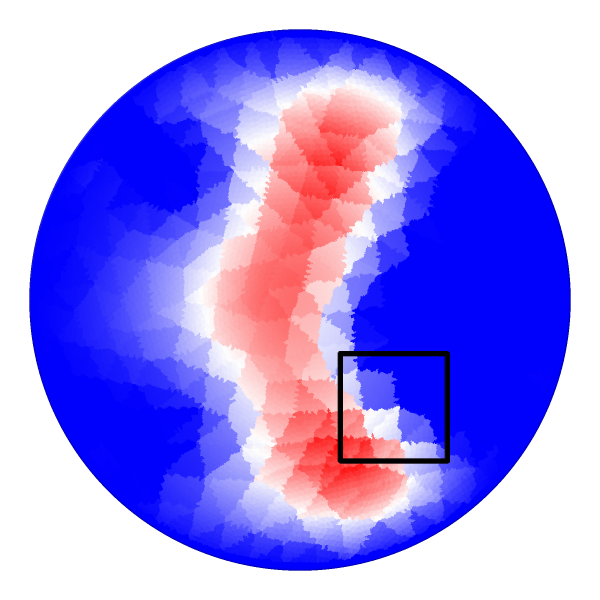}&
\includegraphics[width = \sevenfiglen, trim = {0.25cm, 0.25cm, 0.25cm, 0.25cm}, clip]{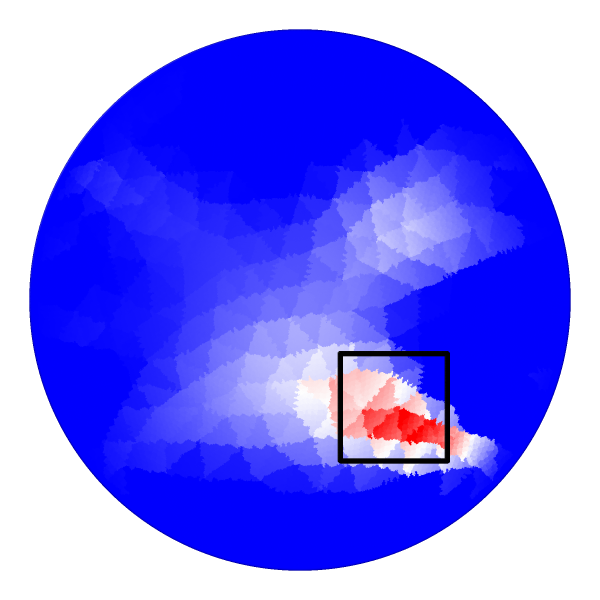}&
\includegraphics[width = \sevenfiglen, trim = {0.25cm, 0.25cm, 0.25cm, 0.25cm}, clip]{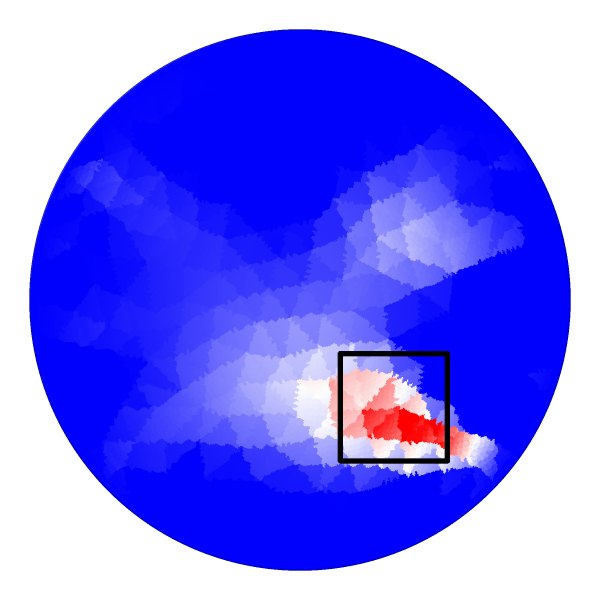}&
\includegraphics[width = \sevenfiglen, trim = {0.25cm, 0.25cm, 0.25cm, 0.25cm}, clip]{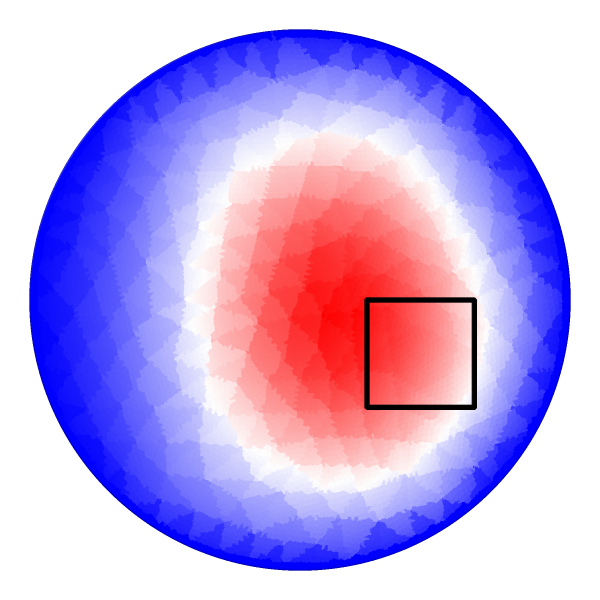}&
\includegraphics[width = \sevenfiglen, trim = {0.25cm, 0.25cm, 0.25cm, 0.25cm}, clip]{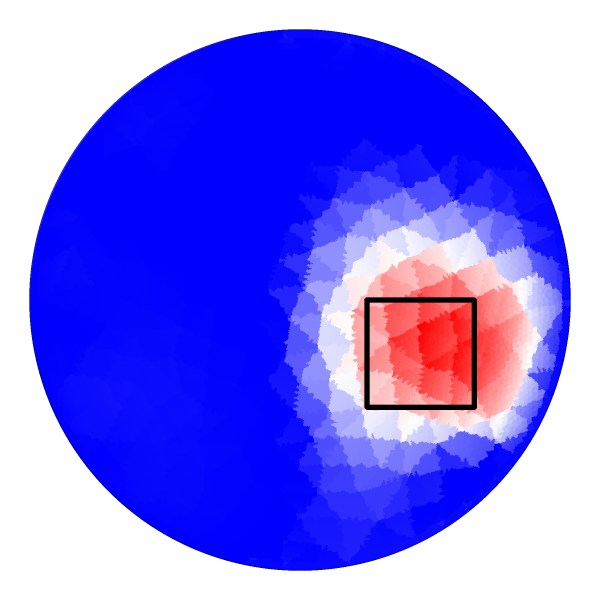}&
\includegraphics[width = \sevenfiglen, trim = {0.25cm, 0.25cm, 0.25cm, 0.25cm}, clip]{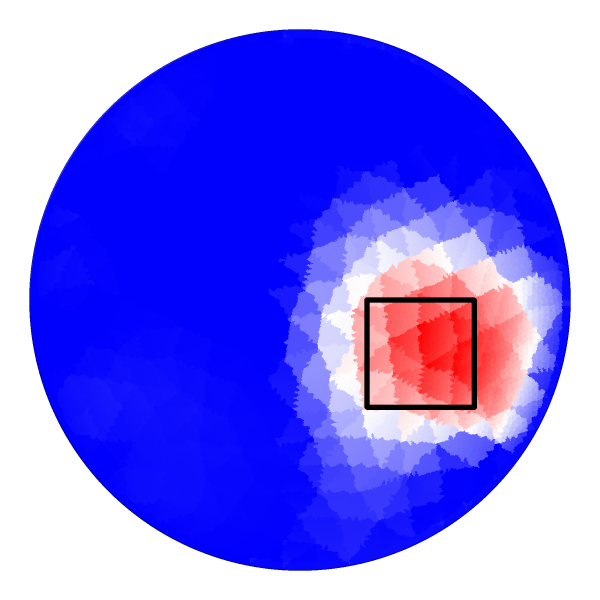}\\
\includegraphics[width = \sevenfiglen, trim = {0.25cm, 0.25cm, 0.25cm, 0.25cm}, clip]{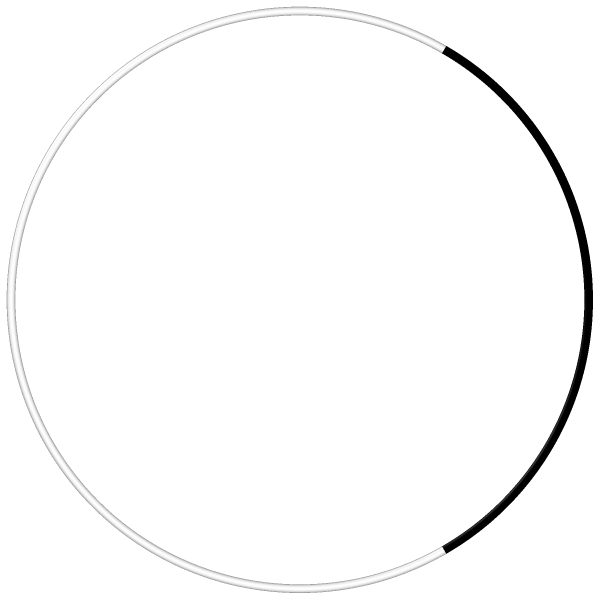}&
\includegraphics[width = \sevenfiglen, trim = {0.25cm, 0.25cm, 0.25cm, 0.25cm}, clip]{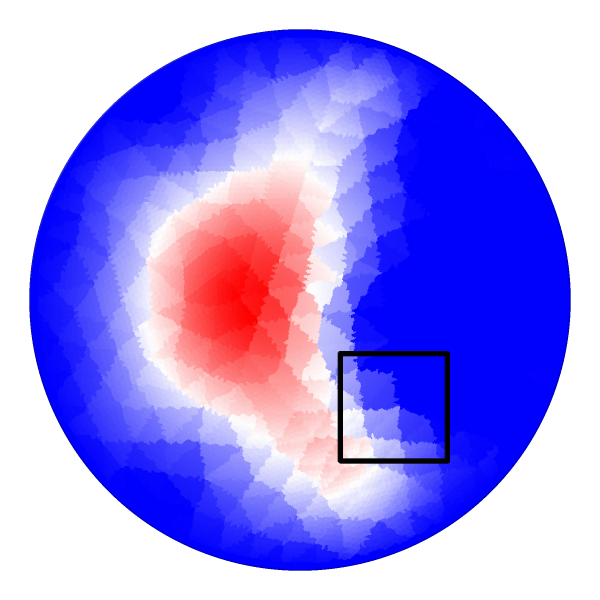}&
\includegraphics[width = \sevenfiglen, trim = {0.25cm, 0.25cm, 0.25cm, 0.25cm}, clip]{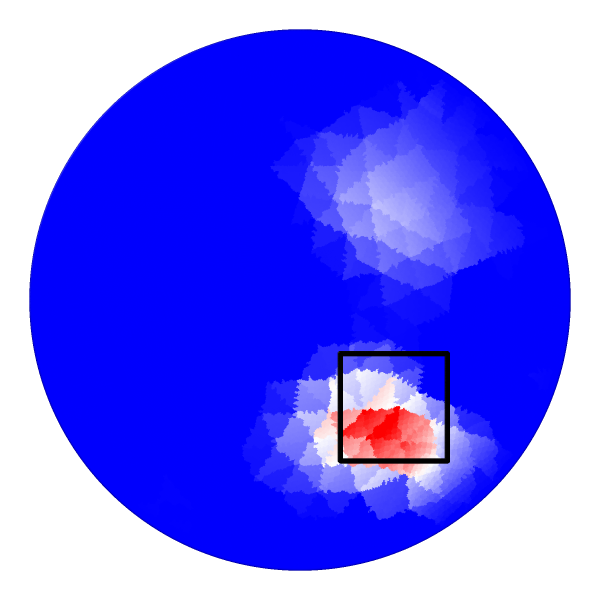}&
\includegraphics[width = \sevenfiglen, trim = {0.25cm, 0.25cm, 0.25cm, 0.25cm}, clip]{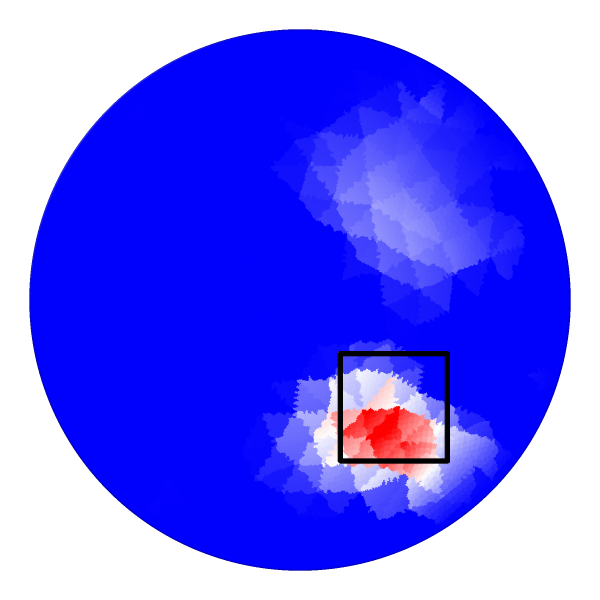}&
\includegraphics[width = \sevenfiglen, trim = {0.25cm, 0.25cm, 0.25cm, 0.25cm}, clip]{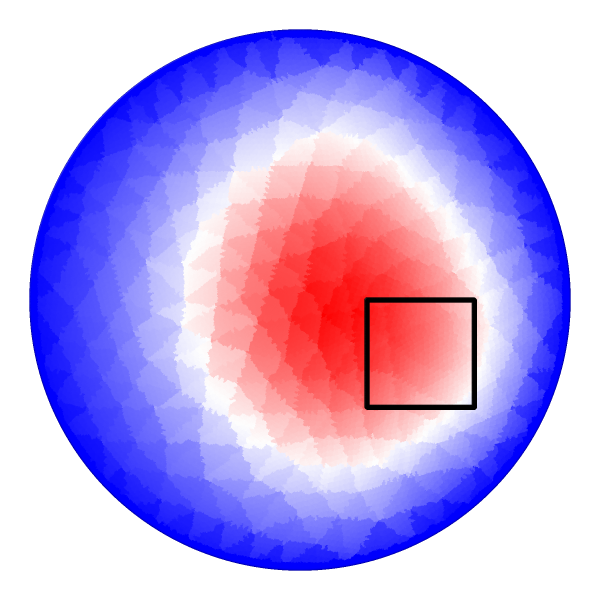}&
\includegraphics[width = \sevenfiglen, trim = {0.25cm, 0.25cm, 0.25cm, 0.25cm}, clip]{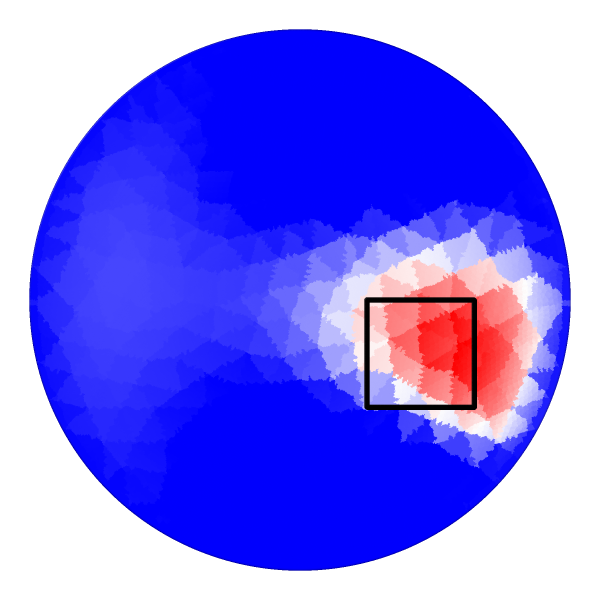}&
\includegraphics[width = \sevenfiglen, trim = {0.25cm, 0.25cm, 0.25cm, 0.25cm}, clip]{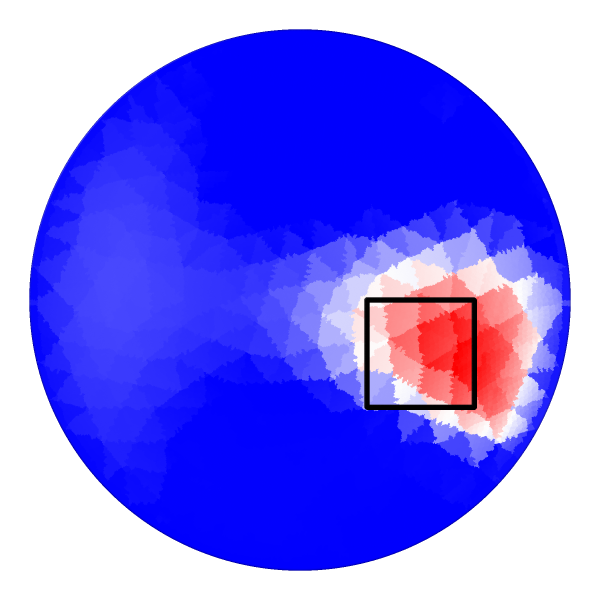}\\
\includegraphics[width = \sevenfiglen, trim = {0.25cm, 0.25cm, 0.25cm, 0.25cm}, clip]{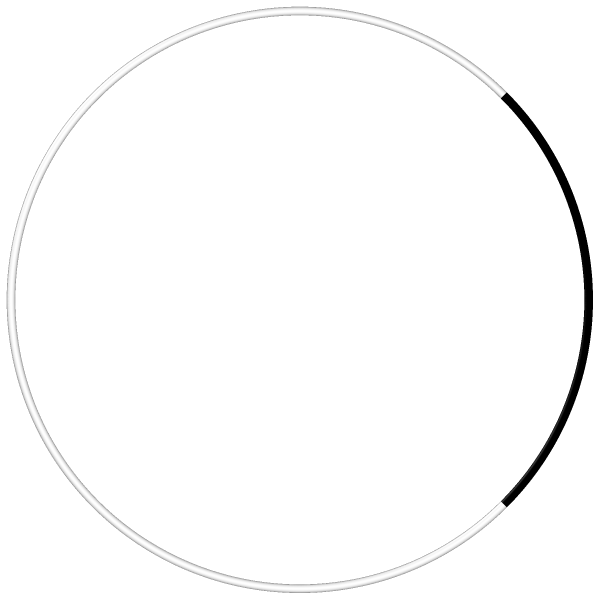}&
\includegraphics[width = \sevenfiglen, trim = {0.25cm, 0.25cm, 0.25cm, 0.25cm}, clip]{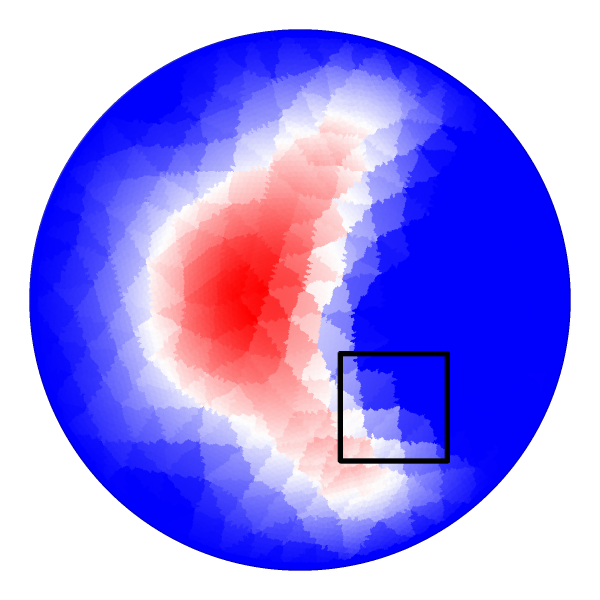}&
\includegraphics[width = \sevenfiglen, trim = {0.25cm, 0.25cm, 0.25cm, 0.25cm}, clip]{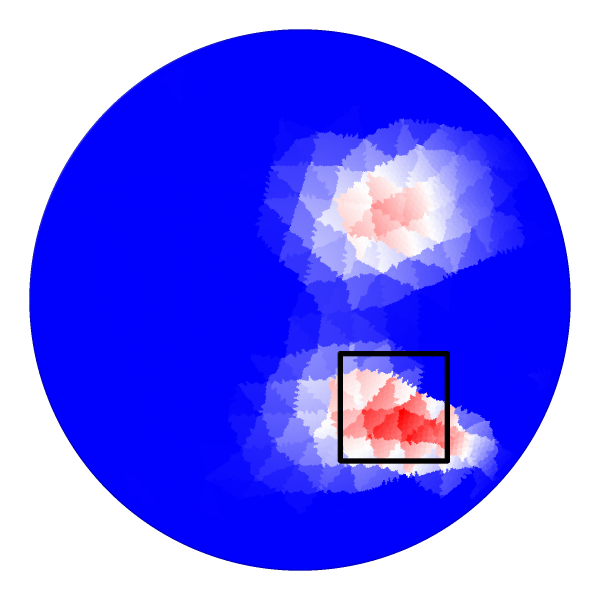}&
\includegraphics[width = \sevenfiglen, trim = {0.25cm, 0.25cm, 0.25cm, 0.25cm}, clip]{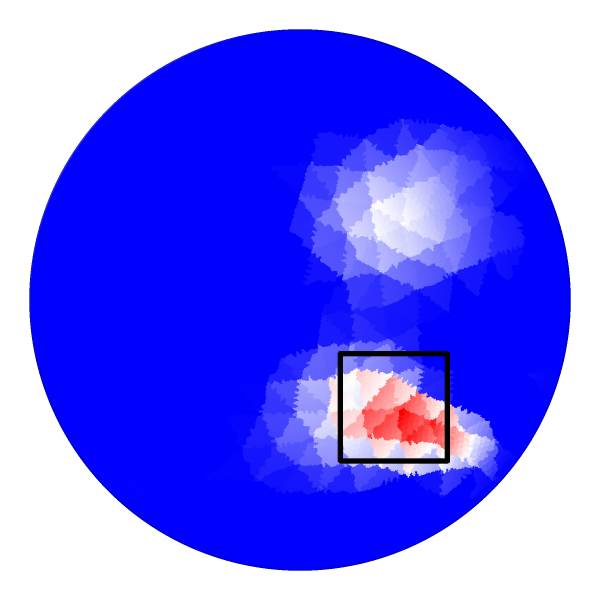}&
\includegraphics[width = \sevenfiglen, trim = {0.25cm, 0.25cm, 0.25cm, 0.25cm}, clip]{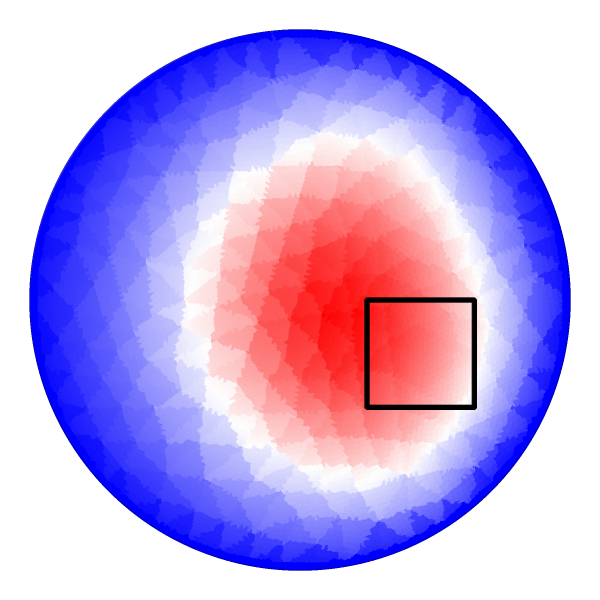}&
\includegraphics[width = \sevenfiglen, trim = {0.25cm, 0.25cm, 0.25cm, 0.25cm}, clip]{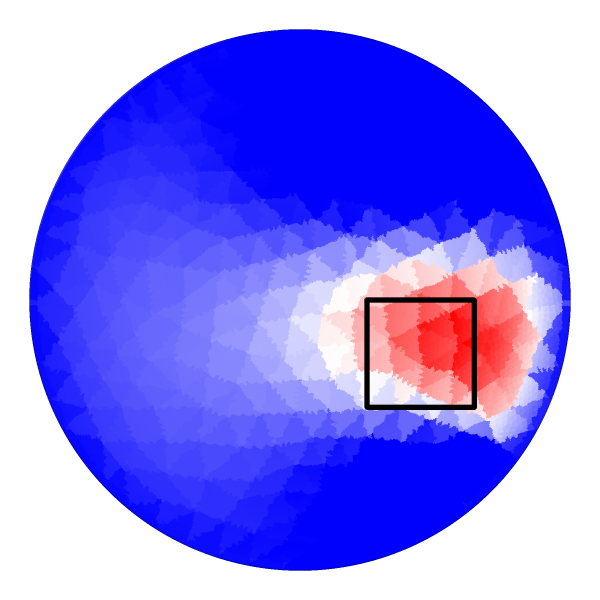}&
\includegraphics[width = \sevenfiglen, trim = {0.25cm, 0.25cm, 0.25cm, 0.25cm}, clip]{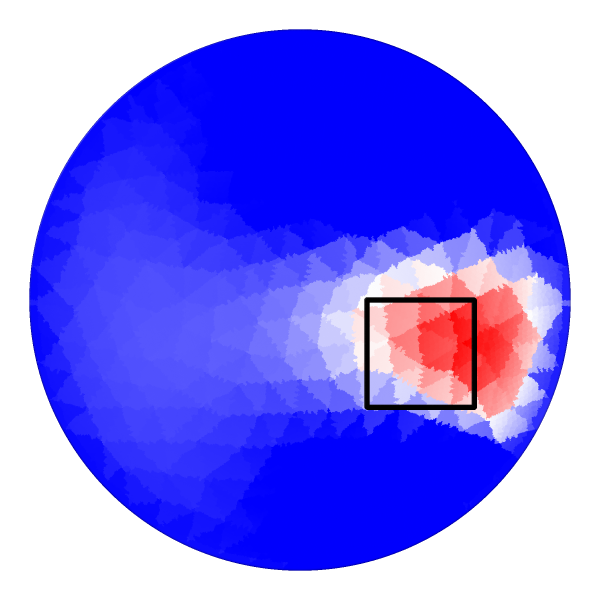}\\
$\Gamma_{D}$& $k=0$ & $k=20$ & $k=30$& $k=0$ & $k=20$ & $k=30$
\end{tabular}
\caption{
The reconstructed conductivity ($u_c$, columns 2–4) and potential ($u_{p}$, columns 5–7) inclusions for Example \ref{exam2}.
\label{fig2}
}
\end{figure}

The IDSM correctly localizes the conductivity and potential inclusions despite their overlapping geometries.
The initial estimate ($k=0$) of the conductivity inclusion falsely identifies an artifact in the central of the domain, which is corrected by $k=20$.
The initial potential estimate $u_p^0$ exhibits severe inaccuracies across the domain $\Omega$, yet the iterative refinement suppresses errors and correctly identifies the inclusion by $k=20$.
The method performs robustly for all configurations of $\Gamma_D$.
{As expected, the reconstruction quality improves with the length of the measurable arc $\Gamma_D$ (the second row verse the third row); nevertheless, all configurations yield satisfactory estimates by $k=30$, demonstrating the robustness of the method to varying data availability.}

\subsection{Example 3: DOT model}\label{exam3}
{This example revisits the DOT problem (cf. \eqref{eqn3}) to evaluate the robustness of the IDSM with respect to the integrability index $p$, which governs the damping of low-rank corrections in the stabilization scheme \eqref{eqn33}.
We test two values: $p_1 = 1$ and $p_2 = 99$. The reconstructions by the BFG correction scheme are shown in Fig. \ref{fig3}.}

\begin{figure}[hbt!]
\centering
\begin{tabular}{ccccccc}
\includegraphics[width = \sevenfiglen, trim = {0.25cm, 0.25cm, 0.25cm, 0.25cm}, clip]{figure/split1.png}&
\includegraphics[width = \sevenfiglen, trim = {0.25cm, 0.25cm, 0.25cm, 0.25cm}, clip]{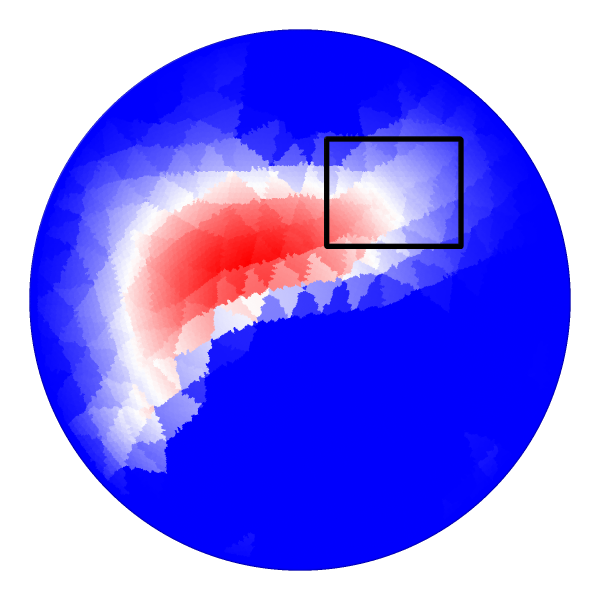}&
\includegraphics[width = \sevenfiglen, trim = {0.25cm, 0.25cm, 0.25cm, 0.25cm}, clip]{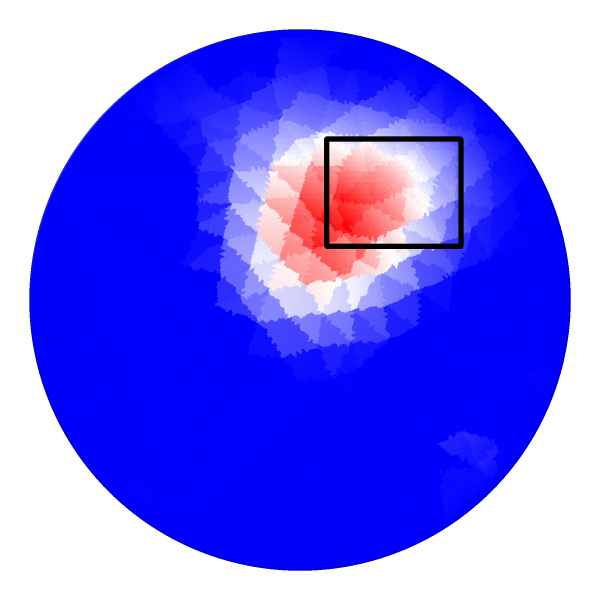}&
\includegraphics[width = \sevenfiglen, trim = {0.25cm, 0.25cm, 0.25cm, 0.25cm}, clip]{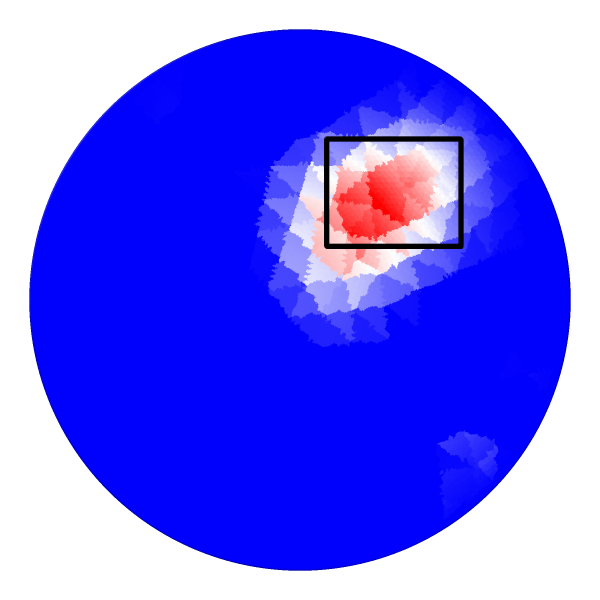}&
\includegraphics[width = \sevenfiglen, trim = {0.25cm, 0.25cm, 0.25cm, 0.25cm}, clip]{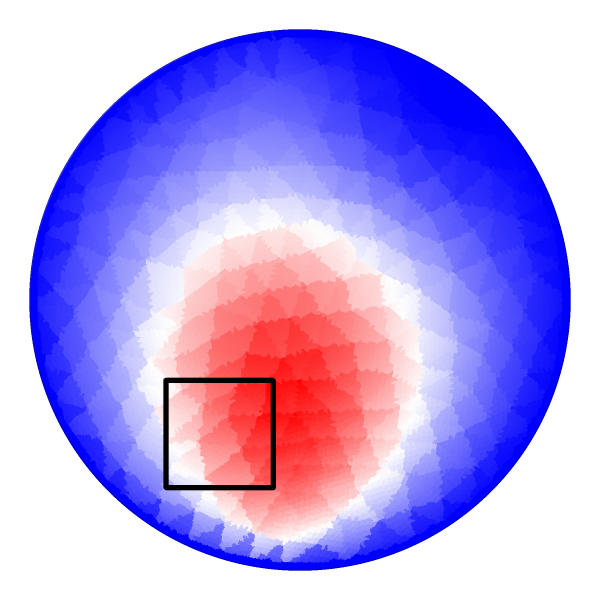}&
\includegraphics[width = \sevenfiglen, trim = {0.25cm, 0.25cm, 0.25cm, 0.25cm}, clip]{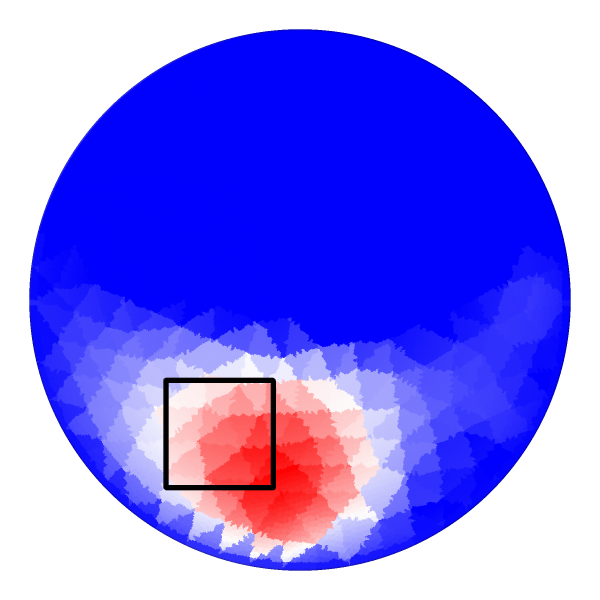}&
\includegraphics[width = \sevenfiglen, trim = {0.25cm, 0.25cm, 0.25cm, 0.25cm}, clip]{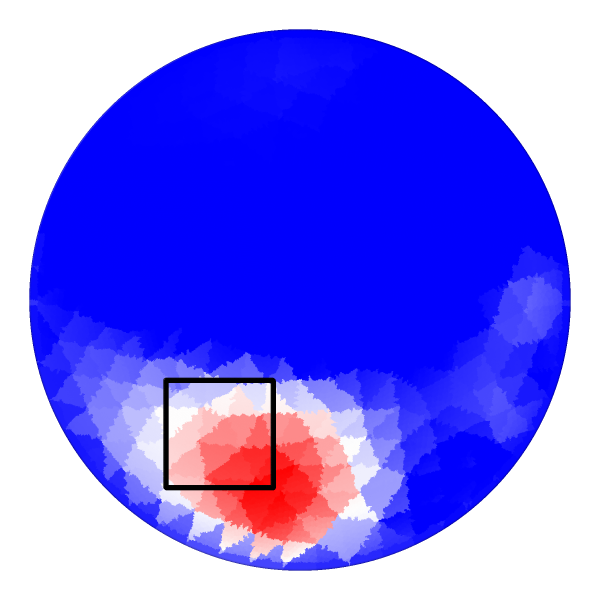}\\
\includegraphics[width = \sevenfiglen, trim = {0.25cm, 0.25cm, 0.25cm, 0.25cm}, clip]{figure/split1.png}&
\includegraphics[width = \sevenfiglen, trim = {0.25cm, 0.25cm, 0.25cm, 0.25cm}, clip]{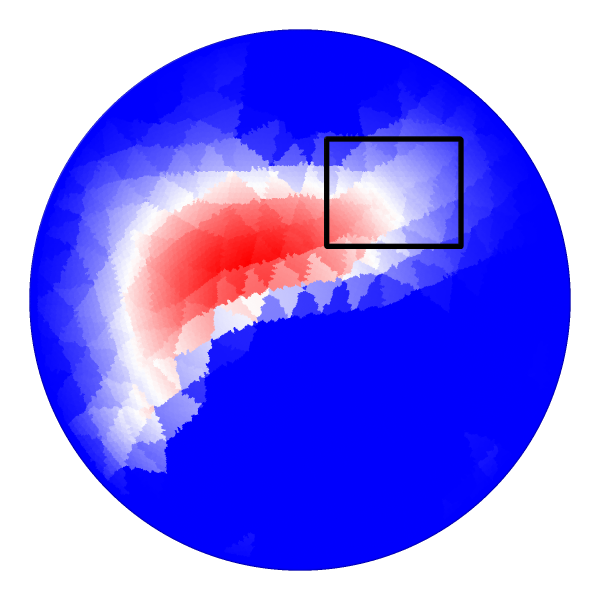}&
\includegraphics[width = \sevenfiglen, trim = {0.25cm, 0.25cm, 0.25cm, 0.25cm}, clip]{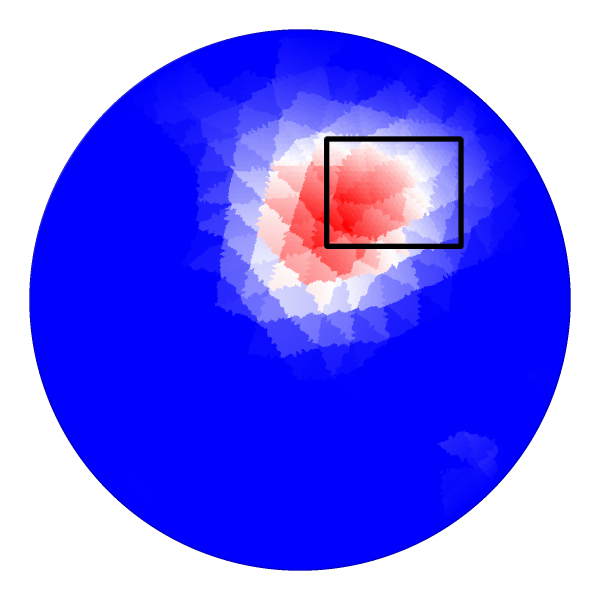}&
\includegraphics[width = \sevenfiglen, trim = {0.25cm, 0.25cm, 0.25cm, 0.25cm}, clip]{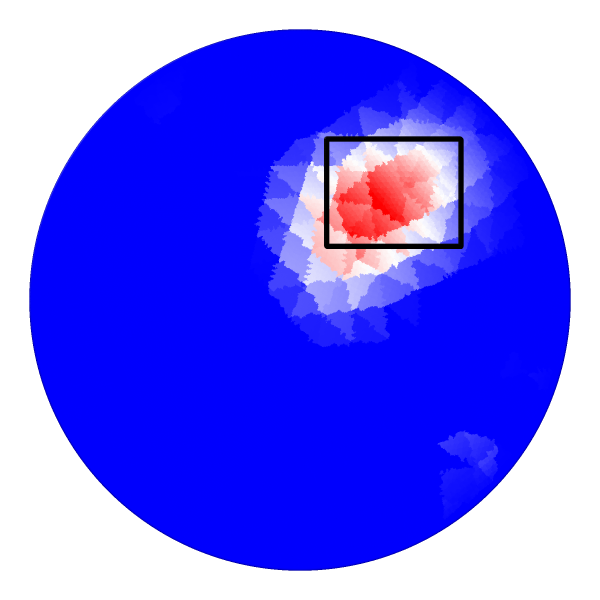}&
\includegraphics[width = \sevenfiglen, trim = {0.25cm, 0.25cm, 0.25cm, 0.25cm}, clip]{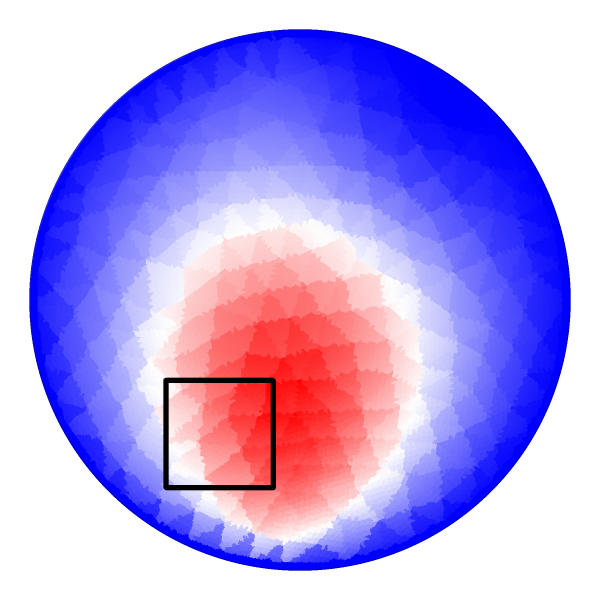}&
\includegraphics[width = \sevenfiglen, trim = {0.25cm, 0.25cm, 0.25cm, 0.25cm}, clip]{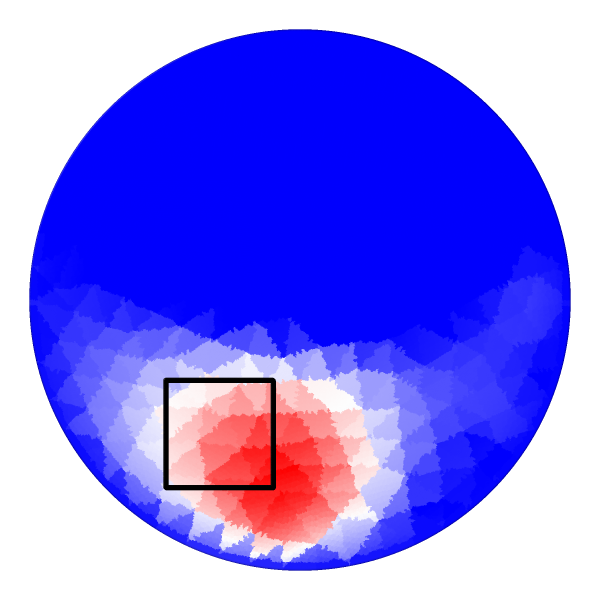}&
\includegraphics[width = \sevenfiglen, trim = {0.25cm, 0.25cm, 0.25cm, 0.25cm}, clip]{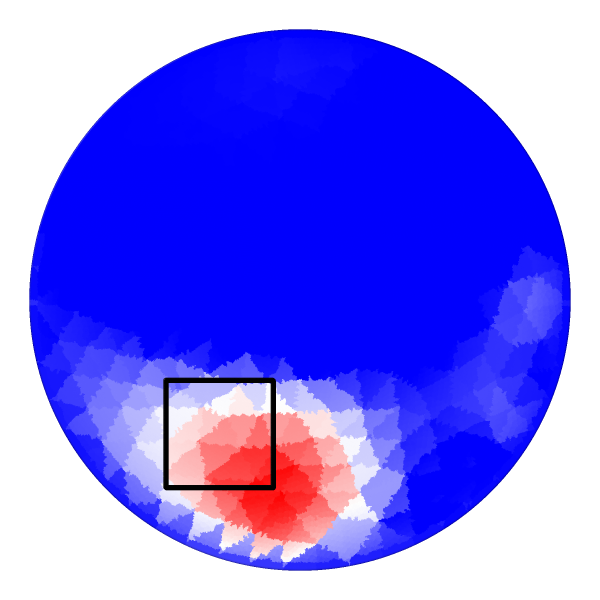}\\
$\Gamma_{D}$& $k=0$ & $k=15$ & $k=30$& $k=0$ & $k=15$ & $k=30$
\end{tabular}
\caption{
The reconstructions for Example \ref{exam3} with integral indices $p=1$ (top) and $p=99$ (bottom).
Columns 2–4: Conductivity inclusion $u_c$.
Columns 5–7: Potential inclusion $u_{p}$.
\label{fig3}
}
\end{figure}

The IDSM accurately identifies both conductivity and potential inclusions for both $p$ values.
The conductivity inclusion (right half, near $\Gamma_{D}$) exhibits sharp localization.
The potential inclusion (left half, far to $\Gamma_{D}$) is clearly delineated without cross-talk artifacts.
The reconstructions remain consistent between $p_{1}$ and $p_{2}$, showing negligible sensitivity of the convergence speed, accuracy, or artifact suppression with respect to the exponent $p$.

\subsection{Example 4: CE model}\label{exam4}
This example evaluates the IDSM on a cardiac electrophysiology (CE) model governed by the semilinear elliptic system \cite{Scacchi2014}:
\begin{equation*}
\left\{
\begin{aligned}
-\nabla \cdot \left( \widetilde{\sigma} \nabla y \right) + \chi_{\Omega \setminus \overline{\omega}} y^3 &= f, & \text{in } \Omega, \\
\widetilde{\sigma} \partial_{n}y &= 0, & \text{on } \Gamma,
\end{aligned}
\right.
\end{equation*}
with $\widetilde{\sigma}(x) = 10^{-4}$ in the ischemic region $\overline{\omega}$ and $\widetilde{\sigma}(x) = 1$ elsewhere.
The goal is to recover the characteristic function $u_* = \chi_{\omega}$ from boundary measurements.
We apply two boundary fluxes $f_1 = 1.1 - x_2^2$ and $f_2 = x_2^2$ to satisfy the positivity condition \cite[Assumption 1.2]{jin2025adaptive}.
We enforce the constraint $0 \leq u^k \leq 1$ via the projection $\mathcal{P}(\eta) = \max\{\min\{\eta, 1.0\}, 0.0\}$.
We test two HR-DtN parameter sets: $(\alpha_{d,1}, \alpha_{n,1}) = (1.0\times 10^{-3}, 2.0)$ and $(\alpha_{d,2}, \alpha_{n,2}) = (0.05, 10.0)$.
The reconstructions by the DFP correction are shown in Fig. \ref{fig4}.

\begin{figure}[hbt!]
\centering
\begin{tabular}{ccccccc}
\includegraphics[width = \sevenfiglen, trim = {0.25cm, 0.25cm, 0.25cm, 0.25cm}, clip]{figure/split1.png}&
\includegraphics[width = \sevenfiglen, trim = {0.25cm, 0.25cm, 0.25cm, 0.25cm}, clip]{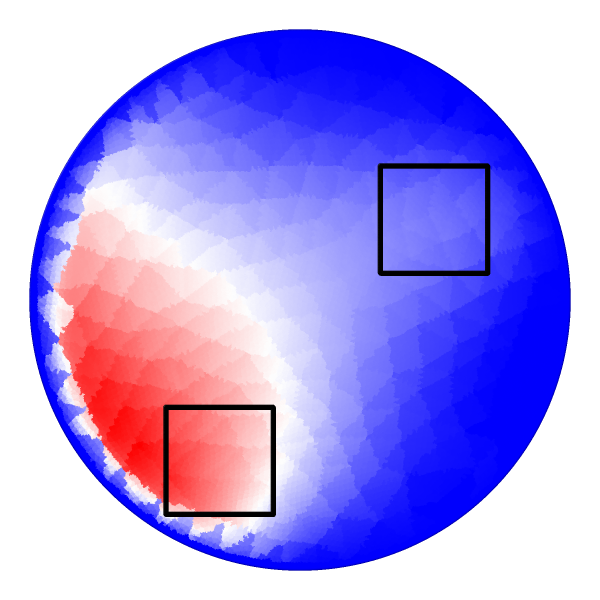}&
\includegraphics[width = \sevenfiglen, trim = {0.25cm, 0.25cm, 0.25cm, 0.25cm}, clip]{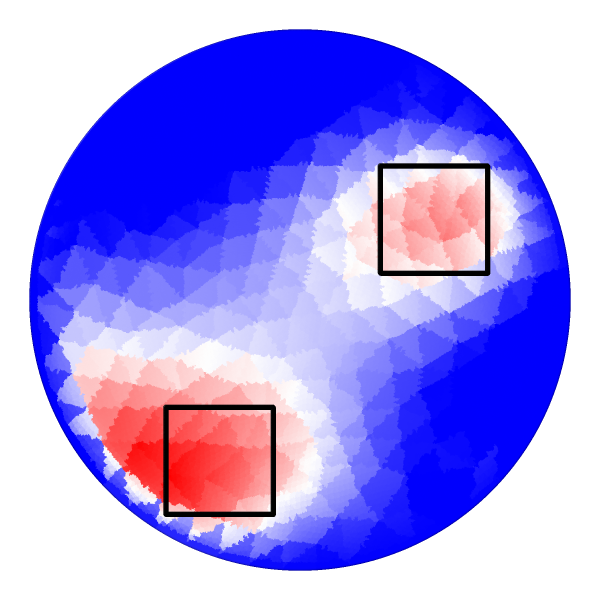}&
\includegraphics[width = \sevenfiglen, trim = {0.25cm, 0.25cm, 0.25cm, 0.25cm}, clip]{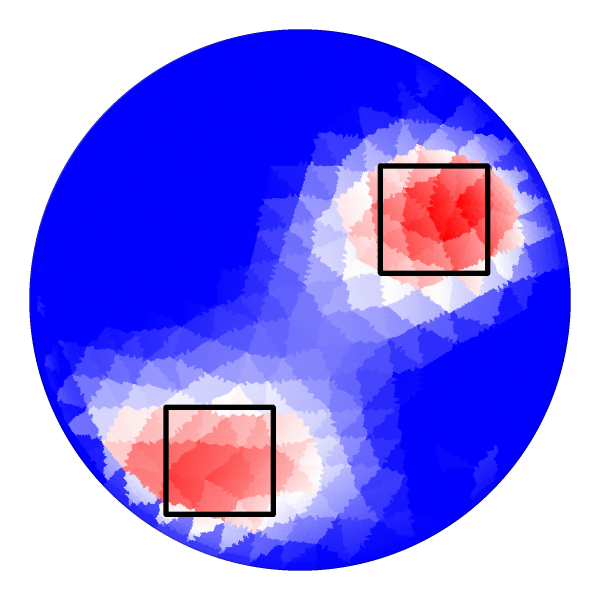}&
\includegraphics[width = \sevenfiglen, trim = {0.25cm, 0.25cm, 0.25cm, 0.25cm}, clip]{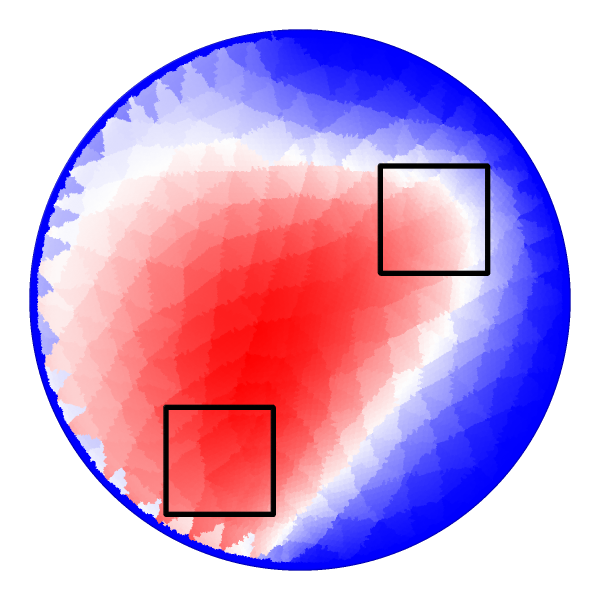}&
\includegraphics[width = \sevenfiglen, trim = {0.25cm, 0.25cm, 0.25cm, 0.25cm}, clip]{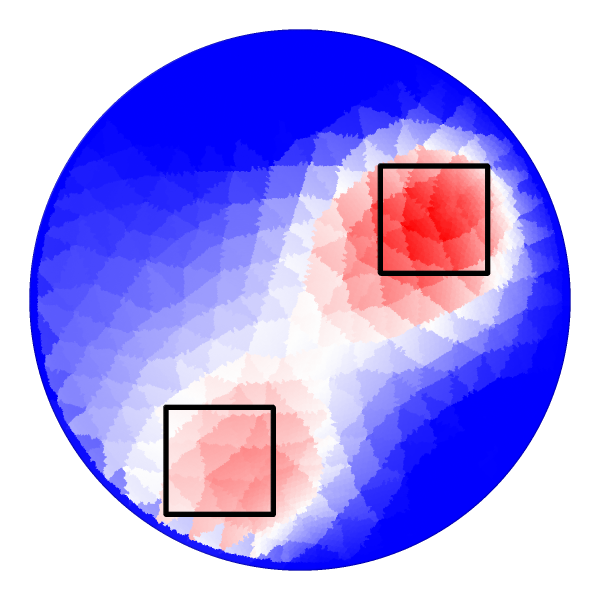}&
\includegraphics[width = \sevenfiglen, trim = {0.25cm, 0.25cm, 0.25cm, 0.25cm}, clip]{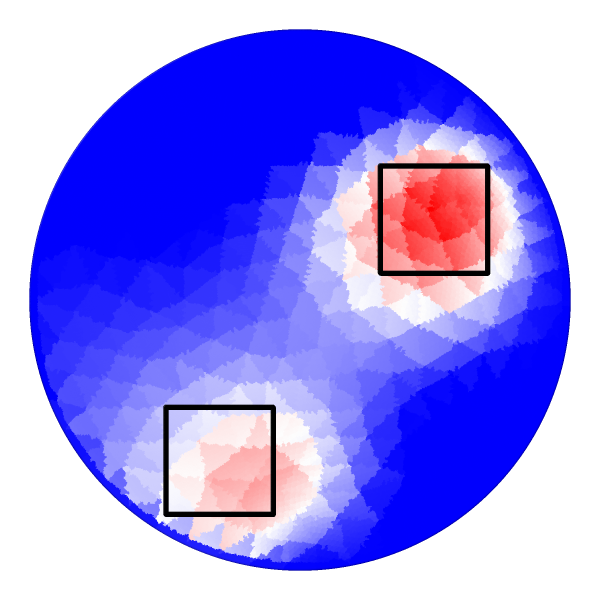}\\
$\Gamma_{D}$& $k=0$ & $k=15$ & $k=30$& $k=0$ & $k=15$ & $k=30$
\end{tabular}
\caption{
The reconstruction of characteristic function $u$ for Example \ref{exam4}.
Columns 2–4: Estimates for $\alpha_{d}=10^{-3}$ and $\alpha_{n}=2.0$.
Columns 5–7: Estimates for $\alpha_{d}=0.05$ and $\alpha_{n}=10.0$.
\label{fig4}
}
\end{figure}

{The iterative refinement of the IDSM effectively overcomes the limitations of the initial index functions.
For the parameter set $(\alpha_{d,1}, \alpha_{n,1})$, the initial estimate ($k=0$) fails to detect the inclusion in the top-right of the domain, while for $(\alpha_{d,2}, \alpha_{n,2})$, it incorrectly merges the two disjoint inclusions into a single, large artifact.
However, in both cases, the iterative process successfully corrects these inaccuracies, suppressing the spurious artifact and recovering the correct, separate geometries by $k=15$.
This demonstrates the robustness of the method, as the well-converged final reconstructions exhibit negligible sensitivity to the HR-DtN parameters $(\alpha_d, \alpha_n)$ in accuracy and stability, despite significant dependence on these parameters in the initial estimates.}

\subsection{Example 5: {Modulus nonlinearity} model}\label{exam5}

This example assesses the IDSM for a nonlinear elliptic inverse problem governed by
\begin{equation*}
\left\{
\begin{aligned}
-\Delta y + y + u|y|y &= 0, && \text{in } \Omega, \\
\partial_{n}y &= f, && \text{on } \Gamma,
\end{aligned}
\right.
\end{equation*}
where the potential $u$ is $40$ within inclusions and $0$ in the background.
The constraint $0 \leq u \leq 60$ is enforced via the projection $\mathcal{P}(\eta) = \max\{\min\{\eta, 60\}, 0\}$.
The reconstructions uses one pair of partial Cauchy data $(f_1, y_d|_{\Gamma_D})$, with boundary flux $f_1(x) = x_1^2$ and accessible set $\Gamma_D$ comprising two disconnected boundary arcs.
The reconstructions obtained with the BFG correction scheme are presented in Fig. \ref{fig5}.

\begin{figure}[hbt!]
\centering
\begin{tabular}{cccc}
\includegraphics[width = \sevenfiglen, trim = {0.25cm, 0.25cm, 0.25cm, 0.25cm}, clip]{figure/split2.png}&
\includegraphics[width = \sevenfiglen, trim = {0.25cm, 0.25cm, 0.25cm, 0.25cm}, clip]{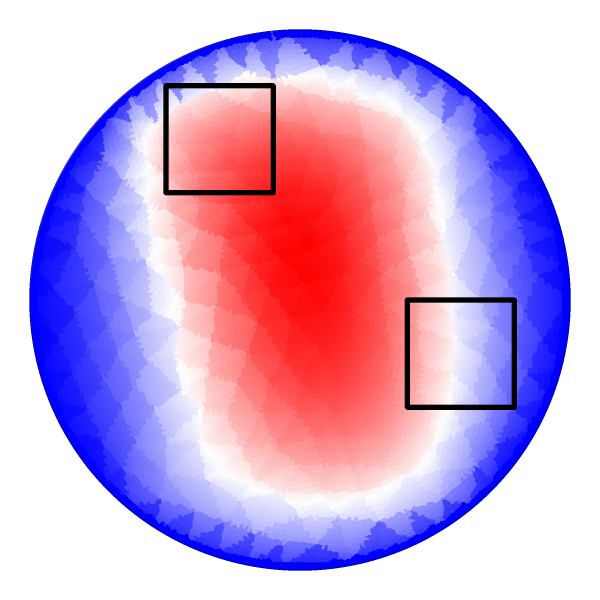}&
\includegraphics[width = \sevenfiglen, trim = {0.25cm, 0.25cm, 0.25cm, 0.25cm}, clip]{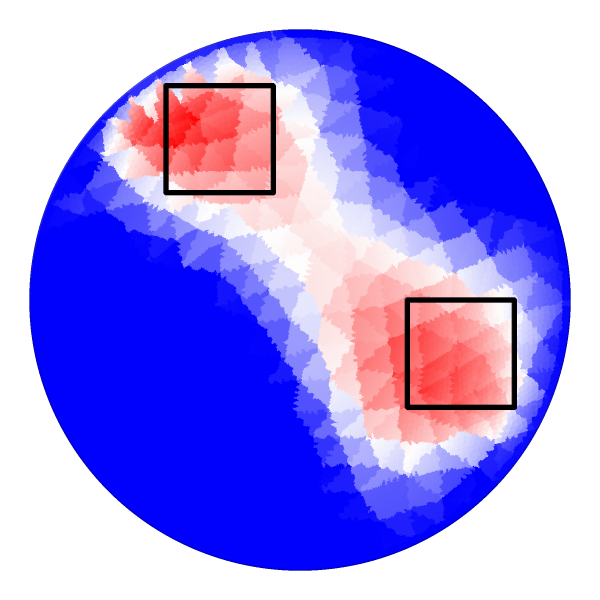}&
\includegraphics[width = \sevenfiglen, trim = {0.25cm, 0.25cm, 0.25cm, 0.25cm}, clip]{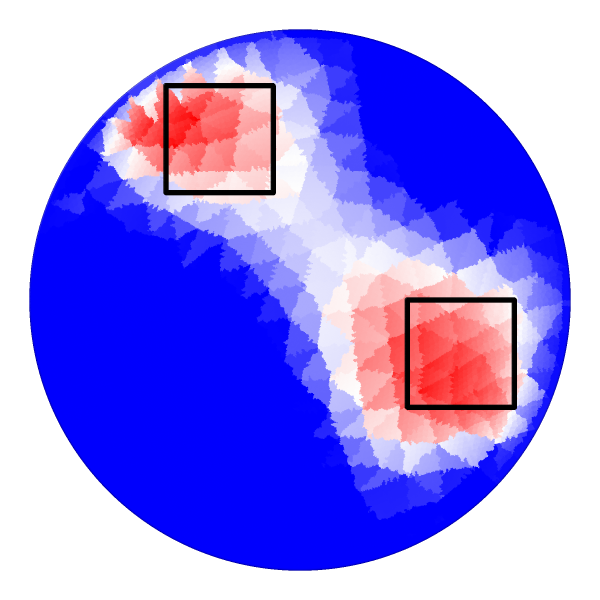}\\
(a) $\Gamma_D$ & (b) $k=0$ & (c) $k=20$ & (d) $k=30$
\end{tabular}
\caption{
The reconstructions of modulus nonlinear potential inclusions for Example \ref{exam5}. \label{fig5} }
\end{figure}

The IDSM provides an accurate localization of both inclusions, cf. Fig. \ref{fig5}(b)-(d), despite inherent nonlinearity of the PDE, nonsmoothness of the potential term $u|y|y$, limited data from a single measurement pair, and partial boundary coverage.
The results show the robustness of the IDSM in reconstructing inclusions in nonlinear systems with only one pair of Cauchy data.

\subsection{Damping factor}\label{subsec:pltLambda}

Now we present the evolution of the damping factor $\frac{1}{1+\lambda_{k-1,p}}$.
The results for Examples \ref{exam1}, \ref{exam2}, \ref{exam4}, and \ref{exam5} (with $p=2$) are shown in Fig. \ref{fig0}(a), and Example \ref{exam3} (with $p_1=1$ and $p_2=99$) in Fig. \ref{fig0}(b).

\begin{figure}[hbt!]
\centering
\begin{tabular}{cc}
\includegraphics[width = 0.45\textwidth]{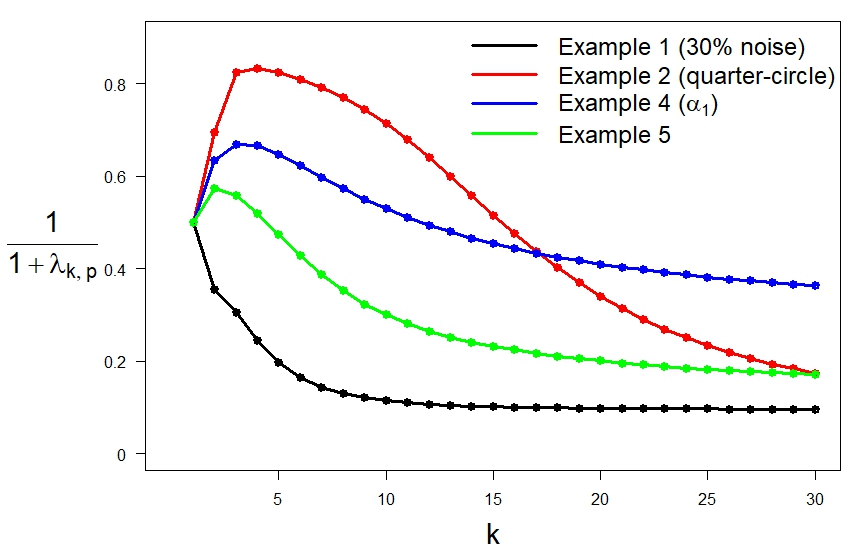}&
\includegraphics[width = 0.45\textwidth]{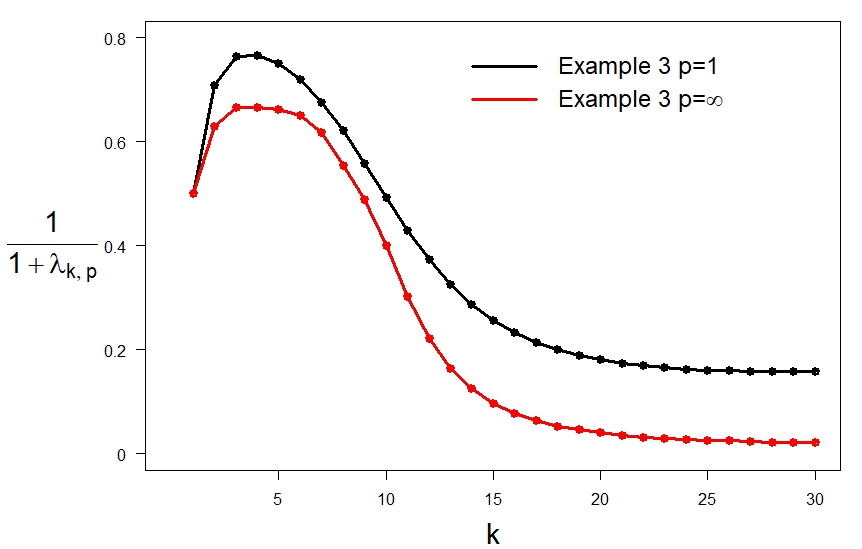}\\
Examples \ref{exam1}, \ref{exam2}, \ref{exam4}, and \ref{exam5}&Example \ref{exam3}
\end{tabular}
\caption{
The evolution of the damping factor $\frac{1}{1+\lambda_{k-1,p}}$ versus the iteration count $k$.
\label{fig0}
}
\end{figure}

For the integrability index $p=2$, the damping factor $\lambda_{k-1,p}$ exhibits a distinct U shape (Fig. \ref{fig0} (a)), which aligns with the convergence of the algorithm. We
initialize $\lambda$ at $\lambda_{0,p} = 1$, giving a damping factor $\frac{1}{1+\lambda_{0,p}} = 0.5$, which effectively suppresses the influence of inaccurate low-rank terms.
When the approximation improves, $\lambda_{k-1,p}$ decreases monotonically, and the damping factor tends to unity ($\frac{1}{1+\lambda_{k-1,p}} \to 1^{-}$) to preserve high-fidelity corrections.
When the algorithm converges, $\lambda_{k-1,p}$ increases again due to the decay in the duality pairing $\langle \hat{\zeta}^{k+1}, \hat{\eta}^{k+1} \rangle$.

In Example \ref{exam3} (Fig. \ref{fig0}, right), the damping factor scales inversely with $p$: a small $p=1$ yields larger asymptotic damping ($\frac{1}{1+\lambda_{k-1,p}} \to 0.9$), while a large $p=99$ (approximating $\infty$) results in small damping ($\frac{1}{1+\lambda_{k-1,p}} \to 0.1$).
This aligns with the remarks in Section \ref{subsec:sroc}: a larger $p$ tightens the spectral bound on $\widetilde{\mathcal{R}}^{k}$ and leads to stronger damping for low-rank terms.

The stabilization of the damping factor post-U-turn provides a natural stopping criterion.
This corroborates the theoretical analysis in Section \ref{subsec:sroc}: the uniform spectral boundedness of $\widetilde{\mathcal{R}}^{k}$ ensures noise suppression and prevents uncontrolled error propagation.
Thus, the stabilization-correction scheme can balance  stability and accuracy.

\section{Conclusion}\label{sec:CON}

We have developed a novel IDSM for elliptic inverse problems with partial Cauchy data.
The key innovations include: the data completion strategy, the heterogeneously regularized DtN map, and a stabilization-correction scheme. These innovations enhance the near-orthogonality of probing functions, suppress spurious artifacts, and guarantee stability even under severe noise.
The method is applicable to diverse elliptic inverse problems, including nonlinear and mixed-type models. The stability is established through spectral analysis of the resolver. Numerical experiments shows its efficiency (30–100 PDE solves) and robustness to noise (up to 30\%), boundary configurations, and parameter choices.
\bibliographystyle{siam}
\bibliography{ref}
\end{document}